\documentclass[twoside,a4paper]{amsart}
\usepackage{amsmath,amssymb,amsthm}
\usepackage{graphicx}
\usepackage[english]{babel}
\usepackage{hyperref}
\usepackage{color}
\usepackage{enumerate}
\usepackage[margin=1.5in,head=13.6pt]{geometry}

\theoremstyle{plain}
\newtheorem{thm}{Theorem}[section]

\newtheorem{lem}[thm]{Lemma}
\newtheorem{prop}[thm]{Proposition}
\newtheorem{res}[thm]{Result}
\theoremstyle{definition}
\newtheorem{defn}[thm]{Definition}

\usepackage{color}

\DeclareMathOperator{\ome}{\Omega}
\DeclareMathOperator{\Aut}{Aut}

\DeclareMathOperator{\supp}{supp}

\newcommand{\tmat}[4]{\left\lfloor\begin{smallmatrix}#1&#2\\#3&#4\end{smallmatrix}\right\rceil}
\begin{document}

\title[Strongly regular graphs with 2-transitive two-graphs]{Vertex-transitive strongly regular graphs in the switching class of doubly transitive two-graphs}

\author{Robert F. Bailey}
\address{School of Science and the Environment (Mathematics), Memorial University--Grenfell Campus, 20 University Drive, Corner Brook, NL A2H 6P9, Canada.}
\email{robert.bailey@mun.ca}
\author{G\'{a}bor P. Nagy}
\address{Bolyai Institute, University of Szeged, Aradi V\'{e}rtan\'{u}k tere 1, H-6720 Szeged, Hungary.}
\email{nagyg@math.u-szeged.hu}
\author{Valentino Smaldore}
\address{Dipartimento di Tecnica e Gestione dei Sistemi Industriali,
Universit\`{a} degli Studi di Padova, Stradella S. Nicola 3, 36100 Vicenza, Italy.}
\email{valentino.smaldore@unipd.it}

\subjclass[2020]{05E30, 20B20}
\keywords{2-transitive permutation groups, Regular two-graphs, Strongly regular graphs, Vertex-transitive graphs, Seidel switching, Automorphism group, Finite classical groups}

\begin{abstract}
Let $G$ be a permutation group that acts $2$-transitively on the finite set $V$ and let $\mathcal{T}=(V,T)$ be a two-graph whose automorphism group contains $G$.  In this paper, we classify those strongly regular graphs $\Gamma$ with vertex set $V$ whose automorphism group is a transitive maximal subgroup of $G$ and whose associated two-graph is $\mathcal{T}$.  In doing so, we obtain a new family of vertex-transitive strongly regular graphs whose associated two-graph arises from $P\Sigma L(2,q)$.
\end{abstract}
\maketitle
\tableofcontents

\section{Introduction}
A \textit{strongly regular graph} with parameters $(v,k,\lambda,\mu)$ is a $k$-regular graph with $v$ vertices, where any two adjacent vertices have $\lambda$ common neighbours, and any two nonadjacent vertices have $\mu$ common neighbours. A strongly regular graph is said to be \textit{primitive} if both it and its complement are connected, and is {\em imprimitive} otherwise; it is well-known that the only imprimitive strongly regular graphs are the complete multipartite graphs $K_{t,t,\ldots,t}$ or their complements.  We refer to the monograph of Brouwer and Van Maldeghem \cite{Brouwer2022} for more details. 

A \textit{two-graph} is a pair $(V, T)$, where $T$ is a set of unordered triples of a vertex set $V$, such that every (unordered) quadruple from $V$ contains an even number of triples in $T$; see \cite[Chapter 10]{Brouwer2012} for more information. Given a graph $\Gamma = (V, E)$, the set of triples $T$ of the vertex set $V$ whose induced subgraph has an odd number of edges is the \textit{associated two-graph} of $\Gamma$.  A two-graph is \textit{trivial} if either $T$ is empty or contains all unordered triples of $V$.  Two-graphs were introduced by G.\ Higman and studied in more detail by Taylor \cite{Taylor1971,Taylor1977}.

Two simple graphs are said to be \textit{switching equivalent} if we can obtain one from the other by the \textit{switching} of edges and non-edges between subsets of vertices. We will be primarily interested in \textit{Seidel switching}, introduced by Seidel \cite{Seidel1974}, which is a method to modify a given graph to obtain another which is cospectral to the original (i.e.\ their adjacency matrices have the same eigenvalues).  This operation is of interest as two simple graphs are switching equivalent if and only if they have the same associated two-graph.  

Finite nontrivial two-graphs with $2$-transitive automorphism groups $G$ have been classified by Taylor \cite{Taylor1992}:
\begin{enumerate}[1)]
\item \textbf{Affine polar type:} $G\cong \mathbb{F}_2^{2m}\rtimes Sp(2m,2)$, $m\geq 2$;
\item \textbf{Symplectic type:} $G\cong Sp(2m,2)$, $m\geq 3$;
\item \textbf{Linear type:} $G\cong P\Sigma L(2,q)$, for $q\equiv 1\pmod{4}$;
\item \textbf{Unitary type:} $G \cong P\Gamma U(3,q)$, for $q\geq 5$ odd;
\item \textbf{Ree type:} $G\cong Ree(q)\rtimes \mathrm{Aut}(\mathbb F_q)$, for $q=3^{2e+1}$, $e\geq 1$;
\item \textbf{Sporadic type:} $G\cong HS$ or $G\cong Co_3$.
\end{enumerate}
In particular, $G$ is either almost simple or a primitive group of affine type.  We note that some restrictions are imposed to ensure that there are no overlaps between the six classes.  We exclude $Sp(4,2)$ since $Sp(4,2)\cong P\Sigma L(2,9)\cong S_6$, which acts 2-transitively on 10 points; this action yields the two-graph of the Petersen graph (in our language, this is a two-graph of linear type). 
We also exclude the unitary group $P\Gamma U(3,3)$ and the smallest Ree group $Ree(3)$: both are maximal subgroups of $Sp(6,2)$, and the corresponding two-graphs are, in fact, a symplectic type two-graph on 28 points.

Let $G^{(\infty)}$ denote the unique largest perfect subgroup of $G$.  Equivalently, $G^{(\infty)}$ is the smallest normal subgroup of $G$ with a solvable factor group. 
The main result of this paper is the following.
\begin{thm} \label{thm:main}
Let $\Gamma=(V, E)$ be a primitive strongly regular graph with associated two-graph $\mathcal{T}=(V, T)$. Write $H=\mathrm{Aut}(\Gamma)$ and $G=\mathrm{Aut}(\mathcal{T})$. Assume that $G$ is 2-transitive, and that $H$ is a transitive maximal subgroup of $G$ that does not contain $G^{(\infty)}$. Then $G$, $H$, and $\Gamma$ are as in Table \ref{tab:main}.
\end{thm}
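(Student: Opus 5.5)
The plan is to run through Taylor's classification of $2$-transitive two-graphs case by case, and in each case to use the general theory linking regular two-graphs with strongly regular graphs in their switching class. First, two standard facts. Since $G$ is $2$-transitive on $V$ and $\mathcal{T}$ is nontrivial, $\mathcal{T}$ is a regular two-graph. A graph $\Gamma$ in the switching class of a regular two-graph on $v$ vertices with eigenvalues $\rho_1,\rho_2$ is strongly regular exactly when it is regular, and then its degree is $k=-v\rho_2/2$ or $k=-v\rho_1/2$ (Seidel's theorem). So $\Gamma$ is determined by a switching set $S\subseteq V$, taken relative to a fixed descendant, and $H=\mathrm{Aut}(\Gamma)$ must preserve the associated partition or cocycle data. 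Equivalently, and this is the form I will use, $\Gamma$ is determined by an $H$-invariant function $V\times V\to\{\pm1\}$ representing $\mathcal{T}$. The key observation is that $H$-invariant graphs in the switching class correspond to $H$-invariant ``coboundary trivializations''. Concretely, the $\pm1$ cocycle on triples defines a class in $H^1$-type data, namely a homomorphism-like obstruction from $H$ to $\{\pm1\}$ via its action on a fixed descendant. For $H$ transitive, the existence of an $H$-invariant graph requires that $H$ lift to the double cover (the Seidel/Taylor double cover) with $H$ having a subgroup of index $2$ in the lifted group that is transitive on $V$.

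Next I enumerate the transitive maximal subgroups $H<G$ not containing $G^{(\infty)}$, using the known lists of maximal subgroups. In affine polar type, $H$ must act transitively on $\mathbb F_2^{2m}$, and the maximal subgroups of the affine group not containing the translations that are still transitive are few. In symplectic type, the two orbit actions of $Sp(2m,2)$ on the $2^{m-1}(2^m\pm1)$ quadratic forms give candidates such as $O^{\mp}(2m,2)$ and field-extension subgroups $Sp(2m/r,2^r).r$ or $O^\pm(2m/r,2^r)$. For $P\Sigma L(2,q)$ acting on $q+1$ points, transitive maximals include the dihedral normalizers of nonsplit tori, the subfield subgroups $P\Sigma L(2,q_0)$ (transitive only in special cases), and $A_5$, $S_4$ for small $q$. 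For $P\Gamma U(3,q)$ on $q^3+1$ points, Ree groups, and $HS$ and $Co_3$, I would use the ATLAS and Bray--Holt--Roney-Dougal lists. For each candidate I discard it unless $|V|$ divides $|H|$ and $H$ is transitive. This is a finite check in the sporadic cases and a divisibility/orbit argument in the families.

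For each surviving pair $(G,H)$ I decide whether an $H$-invariant regular graph of the correct valency $k$ exists in the switching class. I would compute this via the action of $H$ on the two descendants at a point, or equivalently via the stabilizer $H_x$. The graph $\Gamma$ is then the descendant at $x$ with the set $S$ of vertices switched, where $S$ is a union of $H_x$-orbits; $H$-invariance forces $S$ to be determined by a homomorphism $H_x\to\{\pm1\}$ compatible with the two-graph cocycle. Regularity then becomes a counting condition on $H_x$-orbit sizes, which I solve case by case. This produces the known examples: the symplectic forms graphs, the $NO^\pm$ graphs, and the Paley graphs in the $P\Sigma L(2,q)$ case (with $H$ containing $PSL(2,q)$, hence excluded). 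It also produces the new family, which I expect to come from $H=N_G(D_{q+1})$ or a subfield subgroup when $q$ is a square. I also verify that $\mathrm{Aut}(\Gamma)$ equals $H$ exactly and is not larger. Maximality of $H$ makes this almost automatic, since $\mathrm{Aut}(\Gamma)\le G$ and $\mathrm{Aut}(\Gamma)\ne G$ because $G$ is $2$-transitive while $\Gamma$ is not complete.

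The main obstacle will be the linear type $P\Sigma L(2,q)$. There the transitive maximal subgroups vary with the arithmetic of $q$, and the existence of an invariant regular switching depends delicately on the orbit structure of the point stabilizer of the dihedral or subfield group on the projective line. There I would first compute $H_x$-orbits explicitly in $\mathbb F_{q^2}$ coordinates, viewing the projective line as the norm-one elements, and express the Paley-type cocycle as a quadratic character to find the switching set directly.
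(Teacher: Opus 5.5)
Your overall strategy matches the paper's: go through Taylor's list, find the transitive maximal $H$, then decide which $H$-invariant regular graphs lie in the switching class. However, both load-bearing steps are missing, and neither can be carried out as you describe.

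\emph{Enumeration.} For $G=Sp(2m,2)$ with $m$ arbitrary, and for the stabilizer $Sp(2m,2)$ in the affine case, there is no complete list of maximal subgroups. Bray--Holt--Roney-Dougal stops at dimension $12$, and Aschbacher's class $\mathcal{S}$ is unclassified in general. So ``discard candidates that are not transitive'' is not an executable step. The missing idea is that $H$ transitive means $G=G_vH$ with both factors maximal. The Liebeck--Praeger--Saxl classification of maximal factorizations then gives the complete answer.
\begin{itemize}
\item For $Sp(2m,2)$ it yields $O^{\mp}(2m,2)$, $Sp(2a,2^b)\rtimes C_b$ ($b$ prime), $P_m$, $(Sp(m,2)\times Sp(m,2))\rtimes C_2$, $G_2(2)$, $S_{10}$ and $PSL(2,17)$. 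Your sketch anticipates only the first two, and $O^{\pm}(2m/r,2^r)$ lies inside $O^{\pm}(2m,2)$, so it is not maximal.
\item It also shows there is no transitive maximal subgroup avoiding the socle for $PSU(3,q)$ with $q>5$, for the Ree groups, and for $Co_3$.
\item In the affine case you further need that every transitive maximal $H$ contains the translation group $N$; the paper gets this from Guralnick's theorem on subgroups of prime-power index.
\item In the linear case, subfield subgroups are never transitive, since they stabilize a subline $PG(1,q_0)$. Only $N_G(D_{q+1})$ with $p\equiv 3\pmod 4$, or $S_5$ when $q=9$, survive. Also, $PSL(2,q)$ is $2$-transitive and so fixes no graph in the class; no Paley-type example arises.
\end{itemize}

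\emph{Decision step.} Choosing $S=\Gamma(x)$ as a union of $H_x$-orbits with $|S|=k$ only makes the switched descendant $H_x$-invariant with $x$ of degree $k$. It says nothing about $H$-invariance or about the degrees of the other vertices. So ``regularity becomes a counting condition on $H_x$-orbit sizes'' is false, and $S$ is not governed by a homomorphism $H_x\to\{\pm1\}$. Orbit counting is a valid \emph{uniqueness} argument once an $H$-invariant graph is assumed; this is how the paper treats $O^{\mp}(2m,2)$ and $Sp(m,4)\rtimes C_2$. But it cannot establish existence, and you give no argument that it excludes the remaining infinite families. The paper needs a separate uniform idea in each case:
\begin{itemize}
\item \emph{Affine.} The switching set from $\Gamma$ to $VO^{-\epsilon}_{2m}(2)$ is $N$-invariant, hence a hyperplane $\ker\alpha$. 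So $\Gamma$ is the affine polar graph of $Q+\alpha$, which handles all $H=NT$ at once.
\item \emph{$Sp(2a,2^b)\rtimes C_b$ with $b$ odd.} When $m$ is odd, the eigenvalue multiplicities are incompatible with the permutation characters of $PGL(2,2^m)$. When $m$ is even, the switching set to $NO^{\pm}_{m+1}(4)$ would be preserved by the primitive group $PGL(2,2^m)\rtimes C_m$, which is impossible.
\item \emph{$P_m$.} A simple group $GL(m,2)$ lies in (conjugates of) both $P_m$ and $O^+(2m,2)$ and is transitive on $V^-$. It would have to swap the two halves of the switching set to the $O^+(2m,2)$-invariant graph, contradicting simplicity.
\item \emph{Decomposition stabilizer.} It forces $\Gamma$ or its complement to be disconnected.
\item \emph{Linear.} Existence and uniqueness come from $C_{(q+1)/2}$ having odd order. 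In its affine action on the $\mathbb{F}_2$-space of switching functions it has exactly two fixed points: an odd orbit sum, and its translate by the indicator of the orbit $P_0$. A parity invariant forces $N_G(C_{(q+1)/2})$ to fix each, and your Seidel criterion then gives strong regularity.
\end{itemize}
Without these ingredients the proposal does not reach the classification.
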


\begin{table}
\centering
\caption{Vertex-transitive strongly regular graphs in 2-transitive two-graphs}
\label{tab:main}
\small
\begin{tabular}{p{17mm}p{24mm}p{26mm}p{31mm}p{19mm}}
$G$ & $H$ & $\Gamma$ & Properties & References  \\ \hline\hline
$ASp(2m,2)$ & $\mathbb{F}_2^{2m}\rtimes O^\pm(2m,2)$ & $VO^\pm_{2m}(2)$ & for $m\geq 2$; affine polar graph & \cite[Sec. 3.3]{Brouwer2022}
\\ \hline
$Sp(2m,2)$ & $O^\pm(2m,2)$ & $NO^\pm_{2m}(2)$ & for $m\geq 3$; nonsingular points & \cite[Sec. 3.1.2]{Brouwer2022} \\
& $Sp(m,4)\rtimes C_2$ & $NO^\pm_{m+1}(4)$ & for $m\geq 4$ even; two classes of graphs & \cite[Sec. 3.1.4]{Brouwer2022}
\\ \hline
$P\Sigma L(2,q)$ & $C_{\frac{q+1}{2}} \rtimes C_{4e}$ & new srg \& its complement & for $q=p^{2e}>9$, $p\equiv 3\pmod{4}$; the complement $\overline{\Gamma}$ is in this class as well & Section \ref{sec:linear}
\\ \hline\hline
$Sp(6,2)$ & $G_2(2)$ & $U_3(3)$-graph & $|V|=36$; subconstituent of the Hall--Janko graph & 
\\ \hline 
$Sp(8,2)$ & $S_{10}$ & generalized Johnson graph $J(10,3,1)$ & $|V|=120$ & \\
& $PSL(2,17)$ & $srg(136, 63, 30, 28)$ & $|V|=136$ & \cite{Bailey2022,Crnkovic2019}
\\ \hline
$P\Sigma L(2,9)$ & $S_5$ & Petersen graph \& its complement & $P\Sigma L(2,9)$ has two conjugacy classes of $S_5$ &
\\ \hline
$P\Gamma U(3,5)$ & $S_7$ & Goethals graph, $srg(126, 50, 13, 24)$ & parameter uniqueness; $H$ is maximal in $G$ not containing $G^{(\infty)}=PSU(3,5)$ & \cite{Haemers2002}
\\ \hline
$HS$ & $M_{22}$ & $srg(176, 70, 18, 34)$ & parameter uniqueness & \cite{Crnkovic2024}, \cite{Degraer2008}, \cite[Thm. 5.2]{Goethals1970}
\\ \hline\hline
\end{tabular}
\end{table}

In all but one case of Theorem \ref{thm:main}, the subgroup $H$ is maximal in $G$. When $G=P\Gamma U(3,5)=PSU(3,5) \rtimes S_3$, then $H\cong S_7$ is a maximal subgroup of $PSU(3,5)\rtimes C_2$, hence a maximal subgroup not containing $G^{(\infty)}=PSU(3,5)$. 
All the graphs in Table \ref{tab:main} are known, with the exception of the infinite family associated with $P\Sigma L(2,q)$; to the authors' knowledge these graphs are new, and the existence of these graphs is established in Section \ref{sec:linear}.  $ASp(2m,2)$ denotes the affine symplectic group, i.e.\ the semidirect product $G\cong \mathbb{F}_2^{2m}\rtimes Sp(2m,2)$.  The term \textit{parameter uniqueness} used in Table \ref{tab:main} means that the parameters determine the strongly regular graph up to isomorphism.


The paper is organized as follows. In Section~\ref{sec:prelim}, we will give all the necessary preliminaries on (finite) 2-transitive groups and two-graphs.  Section~\ref{sec:known-graphs} reviews the known infinite families of strongly regular graphs which appear in the paper, along with the associated groups, as well as the sporadic and exceptional examples.  In Section~\ref{sec:coboundary}, we explain the algebraic language of cocycles and coboundaries to represent graphs and two-graphs, while in Section~\ref{sec:cob-action}, we define the linear and affine actions of the automorphism groups on the switching class. Section~\ref{sec:linear} is devoted to the new infinite family of graphs with linear type two-graphs.  Finally, Sections~\ref{sec:nonexistence} and \ref{sec:uniqueness} contain proofs of the nonexistence of certain graphs, using deep results on the classification of maximal subgroups of almost simple groups.

\section{Preliminaries}\label{sec:prelim}
\subsection{Group actions and 2-transitivity}
A permutation group $G$ on a set $A$ is a subgroup of the symmetric group $Sym(A)$. An action of $G$ on $A$ may be regarded as a homomorphism from $G$ into the symmetric group. The image of an element $a\in A$ under the permutation
corresponding to $g\in G$ will be written $a^g$. We say that $G$ is \textit{transitive} in its action on $A$ if for all $a_1,a_2\in A$, there exists $g\in G$ with $a_1^g = a_2$. The relation $\sim$ on $A$ where $a_1\sim a_2$ if and only if $a_1^g = a_2$ for some $g\in G$ is an equivalence relation, whose equivalence classes are called \textit{orbits}. Therefore, $G$ is transitive on each of its orbits. For a positive integer $t$, a permutation group $G$ is \textit{$t$-transitive} on $A$ if its induced action on the set of ordered $t$-tuples of distinct elements of $A$ is transitive. The \textit{orbitals} of $G$ are the orbits of its action on $A\times A$, and the number of orbitals is called the \textit{rank} of $G$. For $|A|\geq1$, $G$ has rank at least 2, with equality if and only if $G$ is 2-transitive. Moreover, $G$ is 2-transitive if and only if the stabilizer of each element $a\in A$ is transitive on $A\setminus\{a\}$. 
We will not need the full classification of finite $2$-transitive groups: Table~\ref{tab:2transitive-groups} lists the $2$-transitive actions that occur in the present paper; for the full classification, we refer to Cameron~\cite{Cameron1981}.  For finite simple groups, we follow the notation used in the \textsc{Atlas} \cite{AtlasV3} (see also Wilson~\cite{Wilson2009}).  Extensions of groups will be written as follows: $H\times K$ will denote a \textit{direct product}; $H\rtimes K$ will denote a \textit{semidirect product} with normal subgroup $H$ and complement $K$.

\begin{table}[ht]
\centering
\caption{The $2$-transitive groups occurring in the paper.}
\label{tab:2transitive-groups}
\begin{tabular}{c|c|c|c}
Type & $G$ & Degree & Point stabilizer $G_v$\\
\hline
Affine polar
& $ASp(2m,2)$
& $2^{2m}$
& $Sp(2m,2)$
\\
Symplectic
& $Sp(2m,2)$
& $2^{2m-1}\pm2^{m-1}$
& $O^\pm(2m,2)$
\\
Linear
& $P\Sigma L(2,q)$
& $q+1$
& $P_1$
\\
Unitary
& $P\Gamma U(3,q)$
& $q^3+1$
& $P_1$
\\
Ree
& $Ree(q)\rtimes Aut(\mathbb F_q)$
& $q^3+1$
& $P$
\\
Sporadic
& $HS$
& $176$
& $PSU(3,5)\rtimes C_2$
\\
Sporadic
& $Co_3$
& $276$
& $McL\rtimes C_2$
\end{tabular}
\end{table}

\subsection{Two-graphs}
\begin{defn}[Two-graph]
A \textit{two-graph} is a pair $(V, T)$, where $T$ is a set of unordered triples of a vertex set $V$, such that every (unordered) quadruple from $V$ contains an even number of triples from $T$. The two-graph is called \textit{regular} if each pair of vertices is in a constant number of triples.
\end{defn}
\begin{defn}[Associated two-graph]
Given a graph $\Gamma = (V, E)$, the set of triples $T$ of the vertex set $V$, whose induced subgraph has an odd number of edges, forms a two-graph on the set $V$. The two-graph $\ome(\Gamma)=(V,T)$ is called the \textit{associated two-graph} of $\Gamma$.
\end{defn}

Let $\mathcal{T}=(V,T)$ be a two-graph. The \textit{descendant} of $\mathcal{T}$ with respect to the vertex $v\in V$ is the graph $\mathcal{T}_v=(V,E)$ where $E$ consists of pairs $\{w,z\}$ such that $\{v,w,z\}\in T$. By definition, $v$ is an isolated vertex of the descendant. It is not hard to see that $\ome(\mathcal{T}_v)=\mathcal{T}$, which shows that every two-graph can be represented as the associated two-graph of a graph.

\begin{defn}[Seidel switching \cite{Seidel1974}]
Given a graph $\Gamma=(V, E)$ and a subset $Y$ of the vertex set $V$, the operation of {\em switching} $\Gamma$ with respect to $Y$ consists of replacing all edges from $Y$ to $V\setminus Y$ by nonedges, and all such nonedges by edges, while leaving the edges within $Y$ or outside $Y$ unchanged.
\end{defn}
The subset $Y$ is called a \emph{switching set}. We say that a switching set $Y$ is \emph{regular of degree $d$} if the subgraph induced by $Y$ is $d$-regular.
Switching defines an equivalence relation on the set of all graphs with a given $n$-element vertex set, each equivalence class containing $2^{n-1}$ graphs (since switching with respect to a set and its complement yield the same graph, while switching with respect to two sets is the same as switching with respect to their symmetric difference). The equivalence classes are called \textit{switching classes}. It is well known that a pair of graphs are switching equivalent if and only if they have the same associated two-graph. Consequently, it is meaningful to speak of the switching class of a two-graph. 

We now give a few important results regarding the parameters of two-graphs and related strongly regular graphs.

\begin{res}[{\cite[Proposition 1.1.2]{Brouwer2022}}] \label{res:rels-switching1}
Let $\Gamma$ be a strongly regular graph with parameters $(v,k,\lambda,\mu)$ and spectrum $k^1 r^f s^g$. Let $\Delta$ be a strongly regular graph of valency $\ell>k$ switching equivalent to $\Gamma'$. Then:
\begin{enumerate}
    \item $\Delta$ has spectrum $\ell^1 r^{f-1} s^{g+1}$,
    \item $\frac{v}{2}=k-s=\ell-r$,
    \item $k-r=2\mu$,
    \item $\frac{v}{2}=2k-\lambda-\mu$.
    \item Any switching set from $\Gamma$ to $\Delta$ has size $\frac{1}{2}v$ and is regular of degree $k-\mu$.
\end{enumerate}
\end{res}

\begin{res}[{\cite[Proposition 10.3.3]{Brouwer2012}}] \label{res:rels-switching2}
Let $\Gamma$ be a strongly regular graph with parameters $(v,k,\lambda,\mu)$ associated with a regular two-graph. Then either:
\begin{enumerate}
    \item the graph $\Gamma$ is switched into a strongly regular graph with the same parameters if and only if every vertex outside the switching set $S$ is adjacent to half of the vertices of $S$,
    \item the graph $\Gamma$ is switched into a strongly regular graph with parameters $(v,k+c,\lambda+c,\mu+c)$ where $c =\frac{v}{2}-2\mu$ if and only if the switching set $S$ has size $\frac{v}{2}$ and is regular of degree $k-\mu$.
\end{enumerate}
\end{res}

\begin{res}[{\cite[Theorems 10.3.1 and 10.3.2]{Brouwer2012}}] \label{res:rels-twograph} 
Let $\mathcal{T}$ be a regular two-graph on $v$ vertices and regularity $a$. 
\begin{enumerate}[(i)]
\item For any descendant $\mathcal{T}_v$, $\mathcal{T}_v-v$ is a $srg\left(v-1,a,\frac{3a-v}{2},\frac{a}{2}\right)$.
\item Let $\Gamma$ be a $srg(v,k,\lambda,\mu)$ in the switching class of $\mathcal{T}$. Then $k$ is a root of the polynomial
\[X^{2} - {\left(\frac{v}{2} + a\right)} \, X + \frac{a}{2} {\left(v - 1\right)} ,\]
and
\[\lambda = k+\frac{a-v}{2}, \qquad \mu=k-\frac{a}{2}.\]
\end{enumerate}
\end{res}

\subsection{2-transitive two-graphs}\label{sec:construction}

In this section, we review Taylor's classification of finite two-graphs with 2-transitive automorphism groups. 

Trivially, the only graphs with 2-transitive automorphism groups are complete graphs or their complements, since edges cannot be mapped to non-edges and vice-versa. However, regular two-graphs may have 2-transitive automorphism groups; see \cite[Theorem 6.1]{Taylor1977}. Let $G$ be a (finite) group acting 2-transitively on a set $V$ of vertices and consider a (regular) two-graph $\mathcal{T}$ on $V$ such that $G\leq \Aut(\mathcal{T})$. By \cite{Taylor1992}, we know a complete classification of such two-graphs, by their automorphism groups.
 \begin{res}[{\cite[Theorem 1]{Taylor1992}}] \label{res:Taylorgrs}
     If $(V,T)$ is a regular two-graph admitting an automorphism group $G$, acting 2-transitively on $V$, then one of the following occurs:
     \begin{enumerate}
         \item $G$ has a normal subgroup $N$, and letting $n=|V|$:
         \begin{itemize}
             \item $N\cong PSL(2,q)$, $q\equiv1\pmod4$, $n=q+1$;
             \item $N\cong PSU(3,q)$, $q$ odd, $n=q^3+1$;
             \item $N\cong Ree(q)$, $q=3^{2e+1}$, $n=q^3+1$;
             \item $N\cong Sp(2m,2)$, $m\geq3$, $n=2^{2m-1}\pm2^{m-1}$;
             \item $N\cong HS$, $n=176$;
             \item $N\cong Co_3$, $n=276$.
         \end{itemize}
         \item $G$ is of affine type. 
     \end{enumerate}
 \end{res}

\begin{res}[{\cite[Theorem 2]{Taylor1992}}] \label{res:Taylorconstr}
     For each group in Result \ref{res:Taylorgrs}, there is (up to isomorphism) a unique pair of complementary two-graphs admitting $G$ as a 2-transitive group of automorphisms. Using the notation above, such a group is: 
     \begin{enumerate}
     \item
         \begin{description}
             \item[Linear type] $G\cong P\Sigma L(2,q)$, for $q\equiv 1\pmod{4}$;
             \item[Unitary type] $G \cong P\Gamma U(3,q)$, for $q\geq 5$ odd;
             \item[Ree type] $G\cong Ree(q)\rtimes \Aut(\mathbb F_q)$, for $q=3^{2e+1}$, $e\geq 1$;
             \item[Symplectic type] $G\cong Sp(2m,2)$, $m\geq 3$;
             \item[Sproadic type] $G\cong HS$ or $G\cong Co_3$.
         \end{description}
         \item 
         \begin{description}
             \item[Affine polar type] The automorphism group is the semidirect product $\mathbb{F}_2^{2m}\rtimes Sp(2m,2)$, where $\mathbb{F}_2^{2m}$ is the additive group of the vector space of dimension $2m$ over $\mathbb{F}_2$.
         \end{description}
     \end{enumerate}
\end{res}

The symplectic groups $Sp(2,2)\cong S_3$ and $Sp(4,2)\cong S_6$ act 3-transitively on $3$ and $6$ points; hence, the associated two-graphs are trivial. The 2-transitive actions of degree 10 of $Sp(4,2)$ and $P\Sigma L(2,9)$ are equivalent, and the corresponding two-graphs are the same. In their 2-transitive actions of degree 28, $P\Gamma U(3,3)$ and $Ree(3)\cong P\Gamma L(2,8)$ are (maximal) subgroups of $PSp(6,2)$. The two-graphs obtained from these three groups are isomorphic. 

The 2-transitive automorphism group $G$ does not define the regular two-graph $\mathcal{T}$ uniquely, only up to taking the complement. If $v=|V|$ is the number of vertices, then one has two choices $a,a'$ for the regularity of $\mathcal{T}$; $a+a'=v-2$. Typically, one uses the values for $a$ and $a'$ as listed in Table \ref{tab:regularity-properties}.

\begin{table}
    \caption{The parameters $v$, $a$ and $a'$ of 2-transitive two-graphs}
    \label{tab:regularity-properties}
    \centering
    \begin{tabular}{lllll}
        Type &  $G$ & $v$ & $a$ & $a'$ \\
        \hline\hline
        Affine & $ASp(2m,2)$ & $2^{2m}$ & $2^{2m-1}-2$ & $2^{2m-1}$ \\
        Symplectic & $Sp(2m,2)$ & $2^{2m-1}\pm 2^{m-1}$ & $2^{2m-2}\pm 2^{m-1}-2$ & $2^{2m-2}$ \\
        Linear & $P\Sigma L(2,q)$ & $q+1$ & $(q-1)/2$ & $(q-1)/2$ \\
        Unitary & $P\Gamma U(3,q)$ & $q^{3}+1$ & $(q-1)(q^2+1)/2$ & $(q+1)(q^2-1)/2$ \\
        Ree & $Ree(q)$ & $q^{3}+1$ & $(q-1)(q^2+1)/2$ & $(q+1)(q^2-1)/2$ \\
        Sporadic & $HS$ & 176 & 72 & 102 \\
         & $Co_3$ & 276 & 162 & 112 \\
        \hline\hline
    \end{tabular}
\end{table}

\section{Known graphs and their two-graphs in Table \ref{tab:main}} \label{sec:known-graphs}

In this section, we present the constructions of the known graphs that occur in Table \ref{tab:main}, except for the infinite class of graphs associated with the linear type two-graphs. Here, we have three infinite classes of graphs, and a further five exceptional or sporadic examples. At the end of the section, we identify the associated two-graphs of these graphs (Proposition \ref{prop:table-two-graphs}). 

\subsection{Affine polar graphs \texorpdfstring{$VO^\pm_{2m}(2)$}{VO{2m}(2)}}\label{affinepolar}

Let $m\geq2$. Let $Q$ be a nondegenerate quadratic form of type $\pm 1$ in the vector space $V=\mathbb{F}_2^{2m}$, and let $\langle x,y \rangle =Q(x+y)+Q(x)+Q(y)$ be the associated symplectic bilinear form. In $VO^\pm_{2m}(2)$, two different vectors $x,y$ are adjacent when $Q(x+y)=0$. This yields a strongly regular graph with parameters
\begin{align*}
v&=2^{2m}, & k&=(2^{m}\mp 1)(2^{m-1}\pm 1), \\
\lambda &=2(2^{m-1}\mp 1)(2^{m-2}\pm 1) , & \mu&=2^{m-1}(2^{m-1}\pm 1).
\end{align*}
The full automorphism group is $Aut(VO^{\pm}_{2m}(2))\cong\mathbb F_2^{2m}\rtimes O^{\pm}(2m,2)$, consisting of the semidirect product of the orthogonal group and the additive translation group, as in the first row of Table \ref{tab:main}. See \cite[Section 3.3]{Brouwer2022} for more information.

\subsection{Graphs \texorpdfstring{$NO^\pm_{2m}(2)$}{NO\textpm{}(2m,2)} on nonsingular points over \texorpdfstring{$\mathbb{F}_2$}{GF(2)}} \label{subsec:NO-2m-2}
Let $m\geq3$. The graph $NO^{\pm}_{2m}(2)$ is the graph whose vertex set is $PG(2m-1,2)\setminus Q^{\pm}(2m-1,2)$, and two vertices are adjacent if and only if the corresponding points lie on a line tangent to the quadric $Q^{\pm}(2m-1,2)$. $NO^{\pm}_{2m}(2)$ is a strongly regular graph with the following parameters: 
\begin{align*}
v&=2^{2m-1}\mp2^{m-1}, & k&=2^{2m-2}-1, \\
\lambda &=2^{2m-3}-2, & \mu&=2^{2m-3}\pm2^{m-2}.
\end{align*}
The automorphism group of $NO^{\pm}_{2m}(2)$ is the orthogonal group $O^{\pm}(2m,2)$. See \cite[Section 3.1.2]{Brouwer2022} for more information on this graph. 

\subsection{Graphs \texorpdfstring{$NO^{\pm}_{2m+1}(4)$}{NO\textpm{}(2m+1,4)} on nonsingular points of one type} \label{subsec:no-2m+1-4}
Let $Q(2m,4)$ be a nonsingular parabolic quadric in $PG(2m,4)$. Let $\mathcal{H}^{\pm}$ be the set of nontangent hyperplanes intersecting the quadric in a nonsingular quadric of type $\pm1$. The graph $NO^{\pm}_{2m+1}(4)$ is the graph whose vertex set is $\mathcal{H}^{\pm}$, and two vertices $H_1$ and $H_2$ are adjacent if and only if $H_1\cap H_2\cap Q(2m,4)$ is degenerate. The graph $NO^{\pm}_{2m+1}(4)$ is strongly regular, and its automorphism group is $Aut(NO^{\pm}_{2m+1}(4))\cong Sp(2m,4)\rtimes C_2$.  This graph is described in \cite[Section 3.1.4]{Brouwer2022}.

In \cite{Nagy2024}, the authors proved that the two families $NO^\mp_{4m}(2)$ and $NO^{\pm}_{2m+1}(4)$ are switching equivalent. The parameters of $NO^{\pm}_{2m+1}(4)$ are:
\begin{align*}
v&=2^{4m-1}\pm2^{2m-1}, & k&=(2^{2m-2}\pm1)(2^{2m}\mp1), \\
\lambda &=2^{4m-3}\pm2^{2m-2}-2, & \mu&=2^{4m-3}\pm2^{2m-1}.
\end{align*}
The two-graphs associated with both $NO^\pm(4m,2)$ and $NO^\mp_{2m+1}(4)$ are $\mathcal{X}^\pm_{4m}=(X,T)$ with
\begin{align*}
    X & =\{a\in \mathbb{F}_{2}^{4m} \mid \Theta(a)=1\}\\
    T & =\{\{a,b,c\} \mid a\neq b\neq c\neq a, \; \langle a,b\rangle+\langle a,c\rangle+\langle b,c\rangle=0\}.
\end{align*}
where $\Theta$ is a nondegenerate quadratic form of type $\pm$, and $\langle .,. \rangle$ is its associated symplectic bilinear form.

\subsection{Sporadic and exceptional graphs}

\subsubsection{The $S_{5}$-invariant graph: The Petersen graph and its complement} \label{subsec:petersen}
The \textit{Petersen graph} with parameters $(10,3,0,1)$ and its complement with parameters $(10,6,3,1)$ are the only primitive strongly regular graphs on 10 vertices. The Petersen graph is isomorphic to the tangent graph $NO^-_4(2)$, while its complement is isomorphic to both $NO^+_3(4)$ and the triangular graph $T(5)$; see Sections \ref{subsec:NO-2m-2} and \ref{subsec:no-2m+1-4}. The automorphism groups of both graphs are isomorphic to $S_5\cong O^-(4,2) \cong Sp(2,4)\rtimes C_2$. A simple calculation shows that the associated two-graph consists of 60 unordered triples, with automorphism group $P\Sigma L(2,9) \cong Sp(4,2) \cong S_6$. 

\subsubsection{The $G_2(2)$-invariant graph} \label{subsec:G22-graph}
The group $G_2(2)$ of Lie type is isomorphic to $PSU(3,3) \rtimes C_2$. It has a primitive permutation representation on $36$ vertices. An $srg(36,14,4,6)$ is known to exist with automorphism group $G_2(2)$ and point stabilizer $PGL(2,7)$. This graph is also called the $U_3(3)$-graph; see \cite[Section 10.14]{Brouwer2022} for further details. Although the graph is not determined by its parameters alone, it is the unique graph with these parameters that admits a transitive rank-3 action of $G_2(2)$ of degree 36. 

\subsubsection{Goethals' $S_{7}$-invariant graph} \label{subsec:S7-graph}
There is a unique strongly regular graph with parameters $(v, k, \lambda, \mu) = (126, 50, 13, 24)$. The full automorphism group is $S_7$. Existence is due to Goethals; uniqueness is due to Coolsaet and Degraer (see \cite[Section 10.42]{Brouwer2022}, \cite{Coolsaet2008} and \cite{Haemers2002}). 


\subsubsection{The $S_{10}$-invariant graph} \label{subsec:S10-graph}

When $H=S_{10}$, we obtain the strongly regular generalized Johnson graph $J(10,3,1)$. Recall that the generalized Johnson graphs are the graphs $J(n, k, m)$ whose vertices are the $k$ subsets of $\{1, 2, \ldots, n\}$, with two vertices $A, B$ joined by an edge if and only if $|A\cap B|=m$. The graphs $J(n, k, 1)$ are typically not strongly regular, but Mathon and Klin showed that $J(10,3,1)$ is an exception (see \cite[Section 1.3.6]{Brouwer2022}).  In particular, $J(10,3,1)$ is strongly regular with parameters $(120,63,30,36)$, and is therefore cospectral to $NO^+_8(2)$, but is not isomorphic to it since the automorphism group is $S_{10}$.

\subsubsection{The $PSL(2,17)$-invariant graph} \label{subsec:PSL217-graph}

Let $G=Sp(8,2)$, in its 2-transitive action of degree 136. Then the group $H=PSL(2,17)$ is a transitive maximal subgroup. In \cite{Bailey2022} the first author and his students obtained a strongly regular graph with parameters $(136,63,30,28)$ and full automorphism group $PSL(2,17)$ (acting primitively with rank 12), as a result of a computer search using the GRAPE package for GAP \cite{GAP,GRAPE}; the existence of such a graph was also established independently in \cite{Crnkovic2019}.  The graph is cospectral with but is not isomorphic to $NO^-_8(2)$.

\subsubsection{The $M_{22}$-invariant graph} \label{subsec:M22-graph}

Let $G=HS$ be the Higman--Sims group, acting on 176 vertices. Then the maximal Mathieu subgroup $H=M_{22}$ gives rise to a $srg(176,70,18,34)$. This graph was constructed by Goethals and Seidel \cite[Theorem 5.2]{Goethals1970}; see also \cite[Section 10.51]{Brouwer2022} and \cite{Crnkovic2024} for further details. The graph is uniquely determined by its parameters \cite{Degraer2008}. This graph is an induced subgraph of the strongly regular McLaughlin graph $srg(275,112,30,56)$. Indeed, $M_{22}$ is a maximal subgroup of the McLaughlin group $McL$ with orbit lengths $22,77$ and $176$. 

We note that the Higman--Sims group $HS$ has a maximal subgroup isomorphic to $S_8$. Starting from the Higman symmetric design on 176 points (which has $HS$ as its automorphism group) Brouwer \cite{Brouwer1982} constructed a $srg(176,49,12,14)$ with this action of $S_8$ as its automorphism group (see also \cite[Section 10.50]{Brouwer2022}.  However, this action is intransitive, so this graph cannot arise in our classification.  Furthermore, its associated two-graph is not regular (this can be seen from the eigenvalues of the graph) and thus cannot be the Higman--Sims two-graph.

\subsection{Identifying the associated two-graphs}

\begin{prop}\label{prop:table-two-graphs}
    The strongly regular graphs described in this section have the following associated two-graphs:
    \begin{itemize}
       \item[(i)] The affine polar graphs $VO^{\pm}_{2m}(2)$ have the affine two-graph with full automorphism group $ASp(2m,2)$.
       \item[(ii)] The graph $NO^{\mp}_{2m}(2)$ and, when $m$ is even, the graph $NO^{\pm}_{m+1}(4)$; have the symplectic two-graph $\mathcal X^{\pm}_{2m}$. 
        \item[(iii)] The Petersen graph and its complement have the linear two-graph on 10 vertices, whose full automorphism group is $P\Gamma L(2,9)\cong Sp(4,2)\cong S_6$.
        \item[(iv)] The $U_3(3)$-graph has the symplectic two-graph on 36 vertices, whose full automorphism group is $Sp(6,2)$.
        \item[(v)] The Goethals graph has the Hermitian two-graph on 126 vertices, whose full automorphism group is $P\Gamma U(3,5)$.
        \item[(vi)] The graph $J(10,3,1)$ has the symplectic two-graph on 120 vertices, whose full automorphism group is $Sp(8,2)$.
        \item[(vii)] The $PSL(2,17)$-invariant  $srg(136,63,30,28)$ has the symplectic two-graph on 136 vertices, whose full automorphism group is $Sp(8,2)$.
       \item[(viii)] The $M_{22}$-invariant  $srg(176,70,18,34)$ has the Higman-Sims two-graph on 176 vertices, whose full automorphism group is $HS$.
    \end{itemize}
\end{prop}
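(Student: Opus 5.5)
The plan is to use a single principle throughout. Since every regular two-graph with a given 2-transitive group $G$ is determined up to complementation (Result~\ref{res:Taylorconstr}), it suffices in each case to check three things: the two-graph $\Omega(\Gamma)$ is regular, its automorphism group contains a 2-transitive group isomorphic to the claimed $G$, and its number of vertices and regularity $a$ match a row of Table~\ref{tab:regularity-properties}.

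The first tool is a parameter check. For a strongly regular graph $\Gamma$ with parameters $(v,k,\lambda,\mu)$, the associated two-graph is regular exactly when $v=2(2k-\lambda-\mu)$; this is the condition of Result~\ref{res:rels-switching1}(4), which follows from counting the triples through a pair. In that case the regularity is $a=2(k-\mu)$, since Result~\ref{res:rels-twograph}(ii) gives $\mu=k-a/2$. I would verify this identity for each family.

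1. For $VO^\pm_{2m}(2)$, one computes $2k-\lambda-\mu=2^{2m-1}$ and $a=2(k-\mu)=2^{2m-1}\pm2^{m}-2$. These match the affine row of the table up to complementation of the two-graph.
2. For $NO^\pm_{2m}(2)$, we get $2k-\lambda-\mu=2^{2m-2}\mp2^{m-2}=v/2$.
3. For the sporadic graphs: $(10,3,0,1)$ gives $6-1=5=v/2$. The graph $(36,14,4,6)$ gives $18$. The graph $(126,50,13,24)$ gives $63$. The graph $(120,63,30,36)$ gives $60$. The graph $(136,63,30,28)$ gives $68$. The graph $(176,70,18,34)$ gives $88$.

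In every case this recovers the regularity $a$ listed in Table~\ref{tab:regularity-properties}.

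Next I would identify the group.

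For (i), I would compute the two-graph directly. A triple $\{x,y,z\}$ has edge count
$$Q(x+y)+Q(y+z)+Q(x+z)\equiv \langle x,y\rangle+\langle y,z\rangle+\langle x,z\rangle \pmod 2,$$
using $Q(x+y)+Q(y+z)+Q(x+z)=\langle x+y,y+z\rangle$ and expanding. This expression depends only on the symplectic form. It is therefore invariant under translations and $Sp(2m,2)$, so it is preserved by $ASp(2m,2)$.

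For (ii), the same argument applies with the vertices being nonsingular vectors $a$ with $\Theta(a)=1$. Two such vertices are adjacent in $NO$ iff $\langle a,b\rangle$ takes a fixed value, so the triple parity is again $\langle a,b\rangle+\langle a,c\rangle+\langle b,c\rangle$, which is $Sp(2m,2)$-invariant. For $NO^\pm_{m+1}(4)$ I would simply cite the switching equivalence proved in \cite{Nagy2024}.

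For the sporadic cases (iii)–(viii), $\Aut(\Omega(\Gamma))\ge\Aut(\Gamma)$ acts transitively, and $\Omega(\Gamma)$ is a regular two-graph on $v\in\{10,36,120,126,136,176\}$ vertices. I would then invoke Taylor's classification: a regular two-graph on these numbers of vertices with the given regularity, among those with 2-transitive groups, is unique up to complement. The catch is that this only applies once 2-transitivity of $\Aut(\Omega(\Gamma))$ is known.

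That 2-transitivity is the main obstacle. My first attempt is to show that $\Aut(\Omega(\Gamma))$ contains a 2-transitive overgroup, by exhibiting $\Gamma$ inside the known switching class. For example, $G_2(2)<Sp(6,2)$ stabilizes an $NO$-graph up to switching. Similarly, $S_{10}$ and $PSL(2,17)$ embed in $Sp(8,2)$ with the right action, $M_{22}<HS$, and $S_7<P\Gamma U(3,5)$. The Petersen case is a direct calculation of the 60 triples. Where a clean structural embedding is unavailable, I would fall back on explicit computation in GAP/GRAPE: compute $\Aut(\Omega(\Gamma))$ from the triple system and confirm that it is the stated 2-transitive group.
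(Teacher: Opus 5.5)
Your overall route is the paper's: (i) by the standard computation with the quadratic form, and (ii) via \cite{Nagy2024}, with a direct computation for $NO^{\mp}_{2m}(2)$. For the sporadic cases you compute $\Aut(\Omega(\Gamma))$ explicitly in GAP/GRAPE, which is exactly how the paper settles (vi)--(viii); it cites the literature for (iv) and (v). Three points need repair.

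\emph{The value of $a$ for $VO^\pm_{2m}(2)$.} The regularity is miscomputed. We have $k-\mu=(2^{2m-1}\pm 2^{m-1}-1)-(2^{2m-2}\pm 2^{m-1})=2^{2m-2}-1$, so $a=2(k-\mu)=2^{2m-1}-2$ for both signs, which is exactly the table entry. Your value $2^{2m-1}\pm 2^{m}-2$ matches neither $a$ nor $a'=2^{2m-1}$. It also could not depend on the sign, since your own computation in (i) shows the two-graph is determined by the symplectic form alone.

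\emph{The $Sp(2m,2)$-action in (ii).} The vertex set $\{a:\Theta(a)=1\}$ is not invariant under the linear action of $Sp(2m,2)$. So invariance of the triple sum under linear symplectic maps does not by itself give the 2-transitive action. To get it, identify $a$ with the quadratic form $\Theta+\langle a,\cdot\rangle$, which has type opposite to $\Theta$. Then $g\in Sp(2m,2)$ acts by the affine map $a\mapsto ag+d_g$, where $\Theta^g=\Theta+\langle d_g,\cdot\rangle$. The triple sum is preserved because it is also translation invariant, as in (i). For the statement as written this step is not even needed, since $\mathcal{X}^\pm_{2m}$ is defined by that very formula.

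\emph{The ``structural embedding'' step.} This is not an argument. A transitive subgroup $H<G$ only acts affinely on the switching class of the $G$-invariant two-graph, and such an action need not have a fixed point. Already $G$ itself has none: a 2-transitive group fixes only the complete and the empty graph, whose two-graphs are trivial. So $G_2(2)<Sp(6,2)$, $M_{22}<HS$, $S_7<P\Gamma U(3,5)$ and so on do not place the $H$-invariant graph in that switching class. As you note yourself, the parameter check cannot distinguish $\Omega(\Gamma)$ from other regular two-graphs with the same $(v,a)$.

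Hence for (iii)--(viii) your proof rests entirely on the explicit computation of $\Aut(\Omega(\Gamma))$, or on citing the literature, just as the paper's does. The regularity and parameter checks serve only as consistency checks.
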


\begin{proof}
    Part (i) follows from the standard description of the affine polar two-graph, while part (ii) follows from \cite{Nagy2024}.
    Part (iii) is immediate. Parts (iv) and (v) follow from the references given above. Parts (vi), (vii), and (viii) are verified computationally (using GAP and GRAPE \cite{GAP,GRAPE}) by constructing the associated two-graphs and computing their full automorphism groups.
\end{proof}
We conclude this section by remarking that parts (vi) and (vii) of Proposition~\ref{prop:table-two-graphs} are new results: the fact that $J(10,3,1)$ and the $PSL(2,17)$-graph have symplectic two-graphs shows that the former is switching equivalent to $NO^+_8(2)$ and the latter is switching equivalent to $NO^-_8(2)$, which was not previously known.

\section{The coboundary operator} \label{sec:coboundary}

Taylor \cite{Taylor1992} introduced the notion of the coboundary operator to study graphs and their two-graphs in an algebraic language. To provide a uniform algebraic framework for describing switching classes, we will adopt this concept.

In this section, $V$ is a nonempty set, and $\mathbb{F}_2 = \{0,1\}$ is the finite field of order 2. For a positive integer $k$, a \textit{$k$-cochain} is a totally symmetric mapping $f:V^{k+1}\to \mathbb{F}_2$ such that $f(x_0,\ldots,x_k)=0$ whenever two of $x_0,\ldots,x_k$ are equal. The set $V^{(k,*)}$ of $k$-cochains is an $\mathbb{F}_2$-linear space of dimension $\binom{|V|}{k+1}$. Their support 
\[\supp(f)=\{\{x_0,\ldots,x_k\} \mid f(x_0,\ldots,x_k)=1\}\]
uniquely defines the $k$-cochains. Following Taylor \cite{Taylor1977}, we define the \textit{coboundary operator}
\[\partial: V^{(k,*)} \to V^{(k+1,*)}\]
by
\[\partial f(x_0,\ldots,x_{k+1})=\sum_{i=0}^{k+1} f(x_0,\ldots,x_{i-1},x_{i+1},\ldots,x_{k+1}).\]
A $k$-cochain of the form $\partial f$ is called an \textit{$k$-coboundary}, and a $k$-cochain $f$ such that $\partial f = 0$ is called an \textit{$k$-cocycle}. An easy calculation shows that $\partial^2 f = 0$; hence, a $k$-coboundary is a $k$-cocycle. The converse is also true, viz., a $k$-cocycle is a $k$-coboundary. To see this, choose $v \in V$ and define a ``contracting homotopy'' $\Delta_v: V^{(k+1,*)} \to V^{(k,*)}$ by $(\Delta_v f)(x_0,\ldots,x_k) = f(x_0,\ldots,x_k,v)$, where $f\in V^{(k+1,*)}$. A short calculation shows that for $f \in V^{(k,*)}$ we have
\[f=\Delta_v(\partial f) + \partial(\Delta_v f).\]
In particular, if $\partial f = 0$, then $f = \partial(\Delta_v f)$.

Now observe that a graph with vertex set $V$ can be regarded as a 1-cochain $f: V^{2} \to \mathbb{F}_2$, where $f(x,y) = 1$ if and only if $x$ and $y$ are adjacent. Similarly, if $\tau$ is a 2-cochain, then $\mathcal{T}=(V,\supp(\tau))$ is a two-graph. Conversely, if $\mathcal{T}$ is a two-graph, then $f=\Delta_v\tau$ is a graph in its switching class. More precisely, the graph of the 1-cochain $f$ has $\mathcal{T}$ as the associated two-graph if and only if $\partial f=\tau$. In other words, the switching class of $\tau$ consists of the 1-cochains $f$ such that $\partial f=\tau$. 

Let $G$ be a group acting on $V$. Then $G$ has an induced action on cochains:
\[f^g(x_0,\ldots,x_k)=f(x_0^{g^{-1}},\ldots,x_k^{g^{-1}}).\]
This action is $\mathbb{F}_2$-linear, and it is equivalent to the natural action of $G$ on graphs and two-graphs with vertex set $V$. The \textit{automorphism group} of $f$ is the set $\Aut(f)$ of those permutations $\pi$ of $V$ for which $f=f^\pi$. 

Evidently, $\partial (f^g) =(\partial f)^g$, which implies that $G$ acts on cocycles. The following proposition demonstrates the action of $G$ on switching classes. 
\begin{prop} \label{pr:W-tau-action}
Let $\tau\in V^{(k+1,*)}$ be a cocycle and define 
\[\mathcal{W}(\tau)=\{h\in V^{(k,*)} \mid \partial h=\tau\}.\]
Then $\mathcal{W}(0)$ and $\mathcal{W}(0) \cup \mathcal{W}(\tau)$ are $\mathbb{F}_2$-linear spaces and $\Aut(\tau)$ induces a linear action on them.
\end{prop}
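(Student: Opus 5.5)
The plan is to exploit two facts already established: $\partial$ is $\mathbb{F}_2$-linear, and $\partial(h^g)=(\partial h)^g$ for every permutation $g$ of $V$. Note that the statement has $\tau\in V^{(k+1,*)}$ and $h\in V^{(k,*)}$, so here $\partial:V^{(k,*)}\to V^{(k+1,*)}$.

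First, I will observe that $\mathcal{W}(0)=\ker(\partial|_{V^{(k,*)}})$. Being the kernel of a linear map, it is an $\mathbb{F}_2$-subspace of $V^{(k,*)}$.

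Second, I will treat $\mathcal{W}(0)\cup\mathcal{W}(\tau)$. If $\tau=0$ this set is just $\mathcal{W}(0)$, so there is nothing more to prove. Suppose $\tau\neq 0$. Since $\tau$ is a cocycle, the discussion preceding the proposition (the contracting homotopy $\Delta_v$) shows that $\tau$ is a coboundary, so $\mathcal{W}(\tau)\neq\emptyset$. In fact, for the argument I only need that the union is the full preimage $\partial^{-1}(\{0,\tau\})$. Because $\{0,\tau\}$ is a subspace of $V^{(k+1,*)}$ (as $\tau+\tau=0$ over $\mathbb{F}_2$), its preimage under a linear map is a subspace. Concretely, I will check closure under addition: if $h_1,h_2$ lie in the union, then $\partial(h_1+h_2)=\partial h_1+\partial h_2\in\{0,\tau,2\tau\}=\{0,\tau\}$. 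Over $\mathbb{F}_2$, closure under addition together with $0\in\mathcal{W}(0)$ suffices. It also follows that $\mathcal{W}(\tau)$ is a coset of $\mathcal{W}(0)$ of index at most $2$ in the union.

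Third, I will establish the action. Let $g\in\Aut(\tau)$, so that $\tau^g=\tau$. For $h\in\mathcal{W}(0)\cup\mathcal{W}(\tau)$ we get $\partial(h^g)=(\partial h)^g\in\{0^g,\tau^g\}=\{0,\tau\}$. Hence $h\mapsto h^g$ maps both $\mathcal{W}(0)$ and the union into themselves, and indeed preserves each of $\mathcal{W}(0)$ and $\mathcal{W}(\tau)$ separately. Linearity of this action comes from the linearity of the induced action of $G$ on cochains, already noted in the paper. The action axioms, namely $h^{gg'}=(h^g)^{g'}$ and $h^1=h$, are inherited from the action on $V^{(k,*)}$.

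None of these steps is a genuine obstacle. The only point needing care is the degenerate case $\tau=0$, and remembering that $2\tau=0$ in characteristic $2$, which is exactly what makes the union closed under addition.
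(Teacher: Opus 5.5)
Your proof is correct and follows the paper's argument. In both, the key point is that $\partial(h^g)=(\partial h)^g$, together with $0^g=0$ and $\tau^g=\tau$, shows that $\Aut(\tau)$ preserves $\mathcal{W}(0)$ and $\mathcal{W}(\tau)$; you additionally spell out why $\mathcal{W}(0)=\ker\partial$ and $\mathcal{W}(0)\cup\mathcal{W}(\tau)=\partial^{-1}(\{0,\tau\})$ are subspaces, which the paper takes as evident.
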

\begin{proof}
We only have to show that any $g\in \Aut(\tau)$ preserves $\mathcal{W}(\tau)$. This follows from $\partial(h^g)=(\partial h)^g$, $0^g=0$, and $\tau^g=\tau$. 
\end{proof}

We illustrate this method on the class of affine polar graphs $VO^\pm_{2m}(2)$; see \cite[Section 3.3]{Brouwer2022} and Section \ref{affinepolar}. $VO^{\pm}_{2m}(2)$ is a strongly regular graph with parameters
\begin{align*}
v&=2^{2m}, & k&=(2^{m}\mp 1)(2^{m-1}\pm 1), \\
\lambda &=2(2^{m-1}\mp 1)(2^{m-2}\pm 1) , & \mu&=2^{m-1}(2^{m-1}\pm 1).
\end{align*}
The vertex set of $VO^\pm_{2m}(2)$ is $\mathbb{F}_2^{2m}$, and adjacency is defined by $Q(x+y)=0$, where $Q(x)$ is a nondegenerate quadratic form of type $\pm 1$. $Q(x)$ linearizes to the symplectic bilinear form 
\[\langle x,y\rangle =Q(x+y)+Q(x)+Q(y).\]
The complement graph $\overline{VO^\pm_{2m}(2)}$ has parameters
\[(2^{2m},2^{m-1}(2^m\mp 1),2^{m-1}(2^{m-1}\mp 1),2^{m-1}(2^{m-1}\mp 1)),\]
and can be given by the cochain
\[f(x,y)=Q(x+y)=Q(x)+Q(y)+\langle x,y \rangle.\]
Applying the coboundary operator, we get the 2-cocycle
\[\partial f(x,y,z)=\langle x,y \rangle+\langle x,z \rangle+\langle y,z \rangle\]
of the associated two-graph $\mathcal{T}$. The descendant $\mathcal{T}_0$ is the graph on the set of vertices $\mathbb{F}_2^{2m}$, where $x$ and $y$ are adjacent whenever $\langle x,y\rangle =1$. By deleting the isolated vertex $0$ of $\mathcal{T}_0$, we obtain the complement of the symplectic graph over $\mathbb{F}_2$, which is strongly regular with parameters $(2^{2m}-1,2^{2m-1},2^{2m-2},2^{2m-2})$; see \cite[Section 2.5]{Brouwer2022}. 

The automorphism group of the two-graph $\mathcal{T}$, that is, the stabilizer of $\partial f$ is the semidirect product $G=\mathbb{F}_2^{2m}\rtimes Sp(2m,2)$  with its 2-transitive action on $\mathbb{F}_2^{2m}$. The automorphism group of the affine polar graph $VO^\pm_{2m}(2)$ is the semidirect product $H=\mathbb{F}_2^{2m}\rtimes O^\pm(2m,2)$. $H$ is a transitive maximal subgroup of $G$.

\section{Linear and affine actions on the switching class} \label{sec:cob-action}

Let $W$ be a vector space over a field $K$. An \textit{affine combination} of $w_0,\ldots,w_m\in W$ is $c_0w_0+\cdots+c_mw_m$, where $c_0,\ldots,c_m\in K$ with $c_0+\cdots+c_m=1$. The set of all affine combinations of $w_0,\ldots,w_m$ is the \textit{affine subspace} $W'$ spanned by $w_0,\ldots,w_m$. The size of the smallest affine generating set is the \textit{affine dimension} of $W'$. Any affine subspace is obtained as a translate of a linear subspace. The set of affine subspaces is preserved by all affine linear maps $x\mapsto xA+b$, where $b\in W$ and $x\mapsto xA$ is linear. Affine maps may be represented by matrices of the form
\begin{align} \label{eq:affine-matrix}
\begin{bmatrix} A & 0\\b & 1 \end{bmatrix}.
\end{align}
This representation allows us to compute fixed points and invariant subspaces of affine linear maps using linear algebra tools. 

\begin{prop} \label{prop:bracket-actions}
Let $\tau$ be $2$-coboundary, $\mathcal{T}=(V,T)$ the associated two-graph, and $G=\Aut(\mathcal{T})$. Fix a $1$-cocycle $s\in V^{(1,*)}$ such that $\partial s=\tau$. Define
\[\mathcal{W}(\tau)=\{h\in V^{(1,*)} \mid \partial h=\tau\}.\]
For $v\in V$, write
\[\mathcal{V}(v)= \{f\in V^{(0,*)} \mid f(v)=0\}. \]
For $g\in G$ and $f\in \mathcal{V}(v)$, define
\begin{align}
f^{[g]}(x)&=f(x^{g^{-1}})+f(v^{g^{-1}}), \label{eq:square-bracket-action}\\
f^{\{g\}}(x)&=f(x^{g^{-1}})+f(v^{g^{-1}})+s(x^{g^{-1}},v^{g^{-1}})+s(x,v). \label{eq:curly-bracket-action}
\end{align}
Then:
\begin{enumerate}[(i)]
\item $f^{[g]}$ is a linear action of $G$ on $\mathcal{V}(v)$;
\item $f^{\{g\}}$ is an affine linear action of $G$ on $\mathcal{V}(v)$; 
\item the map $u:f\to \partial f+s$ is an isomorphism between the $G$-sets $\mathcal{V}(v)$ and $\mathcal{W}(\tau)$. 
\end{enumerate}
\end{prop}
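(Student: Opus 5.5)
The plan is to prove (iii) first and obtain (i) and (ii) from it. In the statement, $s$ should be read as a $1$-cochain with $\partial s=\tau$, so $s\in\mathcal{W}(\tau)$, and $\mathcal{V}(v)$ consists of $0$-cochains, that is, functions $f:V\to\mathbb{F}_2$, normalised by $f(v)=0$. For such $f$ we have $\partial f(x,y)=f(x)+f(y)$. The kernel of $\partial$ on $V^{(0,*)}$ is the set of constant functions, and $\mathcal{V}(v)$ is a complement to it, so $\partial$ restricted to $\mathcal{V}(v)$ is injective.

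\textbf{Step 1: $u$ is a bijection.} Recall from Section~\ref{sec:coboundary} that every $1$-cocycle $F$ is a coboundary, namely $F=\partial(\Delta_vF)$, and that $(\Delta_vF)(v)=F(v,v)=0$. Hence $\partial$ maps $\mathcal{V}(v)$ onto $\mathcal{W}(0)$. Since $\mathcal{W}(\tau)=s+\mathcal{W}(0)$, the map $u:f\mapsto \partial f+s$ is a bijection $\mathcal{V}(v)\to\mathcal{W}(\tau)$.

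\textbf{Step 2: the cocycle $c_g$.} For $g\in G$, we have $\partial(s^g)=\tau^g=\tau$, so $F_g=s^g+s$ is a $1$-cocycle. By the contracting homotopy, $F_g=\partial c_g$, where
\[c_g(x)=F_g(x,v)=s(x^{g^{-1}},v^{g^{-1}})+s(x,v).\]
Moreover $c_g(v)=s(v^{g^{-1}},v^{g^{-1}})+s(v,v)=0$, so $c_g\in\mathcal{V}(v)$.

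\textbf{Step 3: equivariance.} Write $f^g(x)=f(x^{g^{-1}})$. Then
\[
u(f)^g=\partial(f^g)+s^g=\partial(f^g+c_g)+s .
\]
The function $f^g+c_g$ takes the value $f(v^{g^{-1}})$ at $v$. Adding this constant does not change $\partial$, and it normalises the function into $\mathcal{V}(v)$. The resulting function is exactly $f^{\{g\}}$ from \eqref{eq:curly-bracket-action}. Thus $u(f)^g=u(f^{\{g\}})$. Since $g\mapsto h^g$ is an action on $\mathcal{W}(\tau)$ (Proposition~\ref{pr:W-tau-action}) and $u$ is a bijection, $f\mapsto f^{\{g\}}$ is the transported action. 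This proves that $\{\cdot\}$ is an action and that $u$ is an isomorphism of $G$-sets, which is (iii).

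\textbf{Step 4: the linear action (i).} We have $f^{[g]}=f^g+f^g(v)\cdot\mathbf 1$, where $\mathbf 1$ is the constant function. This is the natural linear action of $G$ on the quotient $V^{(0,*)}/\langle\mathbf 1\rangle$, transported to the transversal $\mathcal{V}(v)$. Equivalently, it is the case $s=0$ of Step~3 applied to the linear action on $\mathcal{W}(0)$. Linearity is clear, since $f\mapsto f^g$ and $f\mapsto f^g(v)$ are both linear. The composition rule follows because modifying by constants commutes with the permutation action.

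\textbf{Step 5: the affine action (ii).} From the formulas, $f^{\{g\}}=f^{[g]}+c_g$. So each map $f\mapsto f^{\{g\}}$ is affine, with linear part $[g]$ and translation vector $c_g$. The action property was already obtained in Step~3.

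The only delicate point is the normalisation at $v$ in Step~3: one must check that $c_g(v)=0$, so that the constant correction is exactly $f(v^{g^{-1}})$, matching \eqref{eq:curly-bracket-action}. The rest is bookkeeping.
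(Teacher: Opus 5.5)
Your proof is correct. It reorganizes the paper's argument rather than repeating it.

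The paper proves (ii) by directly computing $(f^{\{g\}})^{\{h\}}=f^{\{gh\}}$, a multi-line calculation resting on the identity \eqref{eq:s-sg-id}. It then proves (iii) separately: bijectivity of $u$ comes from the cocycle-is-coboundary argument, and equivariance from checking $\partial f^{\{g\}}+s=(\partial f+s)^g$ with the same identity.

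You reverse the order. You prove bijectivity and equivariance first, and then the composition law for the curly bracket action follows by transporting the natural right action on $\mathcal{W}(\tau)$ along $u$. This bypasses the paper's long composition computation entirely.

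The essential input is the same in both routes. The identity \eqref{eq:s-sg-id} says precisely that $s^g+s$ is a cocycle, which you package as $s^g+s=\partial c_g$ with $c_g=\Delta_v(s^g+s)$. That packaging has a further benefit: it makes affineness explicit via $f^{\{g\}}=f^{[g]}+c_g$, with linear part the square bracket action and translation $c_g$. The paper leaves this implicit.

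Conversely, the paper's direct computation establishes the action property without relying on the bijectivity of $u$.

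Your remark that $s$ should be a $1$-cochain rather than a cocycle, since $\partial s=\tau$, is also correct.
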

\begin{proof}
(i) is straightforward. By $\partial s=\tau=\tau^g=\partial s^g$, we have
\begin{align} \label{eq:s-sg-id}
s(x,y)+s(x,v)+s(y,v) = s^g(x,y)+s^g(x,v)+s^g(y,v)
\end{align}
for all $x,y \in V$. We obtain (ii) from
\begin{align*}
(f^{\{g\}})^{\{h\}}(x) &= f^{\{g\}}(x^{h^{-1}}) + f^{\{g\}}(v^{h^{-1}}) + s(x^{h^{-1}},v^{h^{-1}}) + s(x,v) \\
&= f(x^{h^{-1}g^{-1}}) + f(v^{g^{-1}}) + s(x^{h^{-1}g^{-1}},v^{g^{-1}}) + s(x^{h^{-1}},v) \\
& \qquad + f(v^{h^{-1}g^{-1}}) + f(v^{g^{-1}}) + s(v^{h^{-1}g^{-1}},v^{g^{-1}}) + s(v^{h^{-1}},v)\\
& \qquad + s(x^{h^{-1}},v^{h^{-1}}) + s(x,v) \\
&= f(x^{h^{-1}g^{-1}}) + f(v^{h^{-1}g^{-1}}) + s^g(x^{h^{-1}},v) + s^g(v^{h^{-1}},v) \\
& \qquad + s(x^{h^{-1}},v) + s(x^{h^{-1}},v^{h^{-1}}) + s(v^{h^{-1}},v) + s(x,v) \\
&\stackrel{\eqref{eq:s-sg-id}}{=} f(x^{h^{-1}g^{-1}}) + f(v^{h^{-1}g^{-1}}) + s^g(x^{h^{-1}},v^{h^{-1}}) + s(x,v)\\
&=f(x^{(gh)^{-1}}) + f(v^{(gh)^{-1}}) + s(x^{(gh)^{-1}},v^{(gh)^{-1}}) + s(x,v)\\
&=f^{\{gh\}}(x).
\end{align*}
We show (iii). Since $\partial (u(f))=\partial s=\tau$, $u$ maps $\mathcal{V}(v)$ to $\mathcal{W}(\tau)$. If $h\in \mathcal{W}(\tau)$, then $\partial(h-s)=0$, and $h-s=\partial f$ with $f\in V^{(0,*)}$. Replacing $f$ with $f+f(v)$, we can assume that $f(v)=0$; therefore, $u$ is surjective. If $u(f_1)=u(f_2)$, then $\partial(f_1-f_2)=0$, which implies $f_1=f_2$ by $f_1(v)=f_2(v)=0$. It remains to show that $u$ is an isomorphism between $G$-sets. Fix $f\in \mathcal{V}(v)$, $g\in G$ and define 
\[\varphi(x)=f^{\{g\}}(x)=f^g(x)+f^g(v)+s^g(x,v)+s(x,v).\]
Then,
\begin{align*}
(\partial \varphi+s)(x,y) &= f^g(x)+f^g(y)+s^g(x,v)+s^g(y,v)+s(x,v)+s(y,v)+s(x,y) \\
&\stackrel{\eqref{eq:s-sg-id}}{=} f^g(x)+f^g(y)+s^g(x,y)\\
&=(\partial f+s)^g(x,y).
\end{align*}
This proves $u(f^{\{g\}})=u(f)^g$. 
\end{proof}

We will call the actions \eqref{eq:square-bracket-action} and \eqref{eq:curly-bracket-action} the \textit{square bracket action} and the \textit{curly bracket action}. When the element $v\in V$ is fixed, the natural choice for the 1-cocycle $s$ is the contraction $s(x,y)=\Delta_v\tau(x,y)=\tau(x,y,v)$. In this case, $s(x,v)=0$, $s(x^{g^{-1}},v^{g^{-1}}) = \tau(x^{g^{-1}},v^{g^{-1}},v) = \tau^g(x,v,v^g) = \tau(x,v,v^g)$. The curly bracket action is 
\begin{align} \label{eq:curly-short}
f^{\{g\}}(x)=f^g(x)+f^g(v)+\tau(x,v,v^g).
\end{align}

We finish this section with a useful invariant of the curly bracket action. Define the sign of the cocycle $f\in \mathcal{V}(v)$ by
\begin{align} \label{eq:sign-def}
sgn(f)=\sum_{x\in V} f(x).
\end{align}
Then, for any $g\in G$,
\begin{align*}
sgn(f^{\{g\}}) &= \sum_{x\in V} f^{\{g\}}(x) \\
&=\sum_{x\in V} f(x^{g^{-1}}) + \sum_{x\in V} f(v^{g^{-1}}) \\
&=\sum_{x\in V} f(x^{g^{-1}}) \\
&= sgn(f).
\end{align*}
Clearly, $sgn: \mathcal{V}(v)\to \mathbb{F}_2$ is an additive homomorphism.

\section{Linear type two-graphs and their graphs} \label{sec:linear}

In this section, we investigate the linear-type two-graphs $\mathcal{T}$ whose automorphism group is $P\Sigma L(2,q)$ for $q\equiv 1\pmod 4$, and we classify the vertex-transitive strongly regular graphs that arise in their switching classes. We show that, in the switching class of $\mathcal{T}$, there is exactly one complementary pair of strongly regular graphs with a transitive automorphism group, and that this pair matches the linear-type row of Table \ref{tab:main} exactly, including the stated conditions on $q$. Our argument proceeds in three stages: first, we determine the feasible parameter sets; next, we classify the transitive maximal subgroups $H$ of $P\Sigma L(2,q)$ (Section \ref{sec:psigmal-trsubgrs}); finally, we realize the graphs as cocycles invariant under the curly bracket action of $H$ (Section \ref{sec:vertex-transitive-linear-srg}).

We stress that strongly regular graphs belonging to the class of $\mathcal{T}$ (and thus sharing the same parameters) are already known to exist; a brief description of a known construction is given in Section \ref{sec:folklore-construction}. Nevertheless, this construction does not produce vertex-transitive graphs; to our knowledge, our vertex-transitive SRGs appear to be new and have not previously been documented in the literature.

Let $p$ be a prime, $f$ a positive integer, and $q=p^f$. We represent the projective general linear group $PGL(2,q)$ as the group of fractional linear maps acting on $V=\mathbb{F}_q\cup \{\infty\}$. For $a,b,c,d \in \mathbb{F}_q$, $ad-bc\neq 0$, we denote by $\tmat{a}{b}{c}{d}$ the fractional linear mapping 
\[z\mapsto z^{\tmat{a}{b}{c}{d}}=\frac{az+b}{cz+d}.\]
The projective special linear group $PSL(2,q)$ consists of the maps $\tmat{a}{b}{c}{d}$ with $ad-bc\in S_q$, where $S_q$ is the set of nonzero squares in $\mathbb{F}_q$. The field automorphisms $V\mapsto x^{p^i}$ act on $V$ as well. The groups $P\Gamma L(2,q)$ and $P\Sigma L(2,q)$ are the closure groups of $PGL(2,q)$ and $PSL(2,q)$ with the group of field automorphisms. If $q$ is even, then $PGL(2,q)=PSL(2,q)$, while for odd $q$, $PSL(2,q)$ is a normal subgroup of index $2$ in $PGL(2,q)$. 

The projective general linear group $PGL(2,q)$ acts $3$-transitively on $V$; hence, it also acts transitively on the set of unordered triples of $V$. If $q$ is odd, then both $PSL(2,q)$ and $P\Sigma L(2,q)$ act $2$-transitively on $V$. If $q\equiv 3 \pmod 4$, then $PSL(2,q)$ acts transitively on the set of unordered triples of $V$; this explains the fact that in this case, there is no underlying two-graph. If $q\equiv 1 \pmod 4$, then both $PSL(2,q)$ and $P\Sigma L(2,q)$ have two orbits ${T}$ and ${T}'$ in their action on unordered triples, and the two orbits are swapped by $PGL(2,q)$. This means that $(V, T)$ and $(V, T')$ are isomorphic regular two-graphs with parameters $|V|=q+1$ and regularity $a=\frac{1}{2}(q-1)$. For the remainder of this section, we assume that $q\equiv 1 \pmod 4$, $G=P\Sigma L(2,q)$, and $\{0,1,\infty\} \in T$. We use the notation $\mathcal{T}=(V, T)$ for the regular two-graph of linear type. 

Fix a triple $Z=\{x,y,\infty\}\in T$. As $|PSL(2,q)|=3|T|$, the setwise stabilizer of $Z$ in $G$ acts transitively on $Z$. Swapping $x$ and $y$ if needed, we find an element $g\in G$ with $0^g=x$, $1^g=y$, and $\infty^g=\infty$. These conditions imply that $g$ has the form $g:z\mapsto (y-x)z^{p^i}+x$ with $y-x\in S_q$. Conversely, if $y-x\in S_q$, then $g\in G$ exists and $\{x,y,\infty\}\in T$. This shows that the descendant $\mathcal{T}_\infty$ is the Paley graph $Paley(q)$ plus the isolated vertex $\infty$. 

\subsection{Parameters of linear type SRGs} \label{sec:folklore-construction}
Let us assume that $\Gamma$ is a strongly regular graph in the switching class of $\mathcal{T}$. Result \ref{res:rels-twograph} implies that $\Gamma$ has parameters
\begin{align} \label{eq:linear-params}
\left(q+1,\frac{q\pm \sqrt{q}}{2},\frac{(\sqrt{q}\pm1)^2}{4}-1,\frac{(\sqrt{q}\pm1)^2}{4}\right).
\end{align}
In particular, $q$ is a square. For any odd square prime power $q$, a strongly regular graph in the switching class of $\mathcal{T}$ can be constructed in the following way. (The method seems to be folklore among experts.)

First, identify $\mathbb{F}_q$ with the affine plane $AG(2,\sqrt{q})$. Then the edges of $Paley(q)$ can be represented by the relation ``the slope of the line connecting two points is zero or a square in $\mathbb{F}_{\sqrt{q}}$''. 
\begin{enumerate}
\item Let $Y$ be the union of $(\sqrt q\pm 1)/2$ parallel lines. 
\item Add an isolated vertex $\infty$ to $Paley(q)$.
\item Switch with respect to $Y$.
\end{enumerate}
The resulting graph $P(q, Y)$ is strongly regular with parameters as in \eqref{eq:linear-params}. Moreover, there is a translation group $A$ of order $\sqrt{q}$ that preserves the parallel class of $Y$. Since $A$ fixes $\infty$ and $Paley(q)$, we have $A\leq \Aut(P(q, Y))$. 

In this paper, our focus is on strongly regular graphs with transitive automorphism groups. Result \ref{res:PSigmaL} of the next section classifies the transitive subgroups of $G$. It follows that transitive subgroups of $G$ cannot have subgroups of order $\sqrt{q}$. In particular, the automorphism group of $P(q, Y)$ cannot act transitively on $V$.

\subsection{Transitive subgroups of \texorpdfstring{$P\Sigma L(2,q)$}{PSigmaL(2,q)}} \label{sec:psigmal-trsubgrs}
Even though the subgroup structure of $PSL(2,q)$ has been known since the beginning of the 20th century, the existence of transitive subgroups of $P\Sigma L(2,q)$ is somewhat complicated. Our primary reference is Giudici \cite{Giudici2007}. 

Fix a nonsquare element $s\in\mathbb{F}_q$. Then
\begin{align*}
C_{q-1}&=\left\{\tmat{a}{0}{0}{1} \mid a \in \mathbb{F}_q^*\right\}, \\
C_{q+1}&=\left\{\tmat{a}{b}{sb}{a} \mid a,b \in \mathbb{F}_q, (a,b)\neq (0,0)\right\}
\end{align*}
are cyclic subgroups of order $q-1$ and $q+1$ of $PGL(2,q)$. $C_{q-1}$ has two fixed points $0,\infty$, and $C_{q+1}$ acts regularly on $V$. Both of these groups have a unique subgroup of index 2:
\begin{align*}
C_{\frac{q-1}{2}}&=\left\{\tmat{a}{0}{0}{1} \mid a \in S_q\right\}, \\
C_{\frac{q+1}{2}}&=\left\{\tmat{a}{b}{sb}{a} \mid a^2-sb^2 \in S_q\right\}.
\end{align*}
Their normalizers in $PSL(2,q)$ are the dihedral groups
\[D_{q-1}=C_{\frac{q-1}{2}} \rtimes \left\langle \tmat{0}{1}{-1}{0} \right\rangle, \qquad D_{q+1}=C_{\frac{q+1}{2}} \rtimes \left\langle \tmat{1}{0}{0}{-1} \right\rangle.\]

\begin{lem} \label{lm:Dq+1-normalizer}
Let $G = P\Sigma L(2, q)$ for $q=p^f \equiv 1 \pmod{4}$, $p$ an odd prime, and $f\geq 2$. Let $\phi:z\mapsto z^p$ denote the Frobenius automorphism of $\mathbb{F}_q$ and define the semilinear fractional maps
\begin{align*}
\alpha&=\phi \tmat{1}{0}{0}{s^{\frac{p-1}{2}}}:z \mapsto \frac{z^p}{s^{\frac{p-1}{2}}} \\
\beta&=\phi \tmat{0}{1}{s^{\frac{p+1}{2}}}{0}:z \mapsto \frac{s^{\frac{p+1}{2}}}{z^p}.
\end{align*}
Then,
\[N_G(D_{q+1})=\begin{cases}
D_{q+1}\langle \alpha \rangle = C_{\frac{q+1}{2}} \rtimes \langle \alpha \rangle& \text{if $p\equiv 1 \pmod{4}$,} \\
D_{q+1}\langle \beta \rangle = C_{\frac{q+1}{2}} \rtimes \langle \beta \rangle & \text{if $p\equiv 3 \pmod{4}$.}
\end{cases}\]
In both cases, $N_G(D_{q+1})\cong C_{\frac{q+1}{2}} \rtimes C_{2f}$. 
\end{lem}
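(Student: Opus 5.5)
The plan is to pin down the order of $N_G(D_{q+1})$ with a Frattini-type argument, and then to check by hand that the explicit element $\alpha$ (respectively $\beta$) lies in $G$, normalizes $D_{q+1}$, and has the right order.

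\emph{Order of the normalizer.} By Dickson's classification of subgroups of $PSL(2,q)$ (see \cite{Giudici2007}), the dihedral subgroups of order $q+1$ form a single conjugacy class in $PSL(2,q)$. For $q>5$ (automatic here since $f\ge 2$) we also have $N_{PSL(2,q)}(D_{q+1})=D_{q+1}$, by maximality. Since $PSL(2,q)\trianglelefteq G$ and every $G$-conjugate of $D_{q+1}$ again lies in $PSL(2,q)$, all $G$-conjugates of $D_{q+1}$ are already $PSL(2,q)$-conjugate. The Frattini argument then gives $G=PSL(2,q)\,N_G(D_{q+1})$. Hence
\[
|N_G(D_{q+1})| = |D_{q+1}|\cdot [G:PSL(2,q)] = (q+1)f .
\]
It therefore suffices to exhibit a subgroup of this order containing $D_{q+1}$ and normalizing it.

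\emph{Membership of $\alpha,\beta$ in $G$.} The element $\alpha$ lies in $G=PSL(2,q)\rtimes\langle\phi\rangle$ exactly when the linear part $\tmat{1}{0}{0}{s^{(p-1)/2}}$ lies in $PSL(2,q)$. Its determinant $s^{(p-1)/2}$ is a square iff $(p-1)/2$ is even, that is, iff $p\equiv 1\pmod 4$. For $\beta$ the determinant is $-s^{(p+1)/2}$. Because $q\equiv 1\pmod 4$, the element $-1$ is a square, so this determinant is a square iff $(p+1)/2$ is even, that is, iff $p\equiv 3\pmod 4$. This explains the case split in the statement.

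\emph{Normalization and structure.}
\begin{enumerate}[(1)]
\item I will conjugate a general element $\tmat{a}{b}{sb}{a}$ of $C_{q+1}$ by $\alpha$, and similarly by $\beta$. Using $\phi^{-1}\tmat{a}{b}{c}{d}\phi=\tmat{a^p}{b^p}{c^p}{d^p}$, a direct computation should give, up to a scalar, a matrix of the form $\tmat{a'}{b'}{sb'}{a'}$ with $b'=b^p s^{\pm(p-1)/2}$ or similar. So $\alpha$ and $\beta$ normalize $C_{q+1}$, hence its characteristic subgroup $C_{\frac{q+1}{2}}$. The same check on $\tmat{1}{0}{0}{-1}$ shows that they normalize $D_{q+1}$.
\item Next I compute $\alpha^f$. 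The linear parts accumulate to
\[
\tmat{1}{0}{0}{s^{\frac{p-1}{2}(1+p+\dots+p^{f-1})}}=\tmat{1}{0}{0}{s^{(q-1)/2}}=\tmat{1}{0}{0}{-1},
\]
which is the involution in $D_{q+1}\setminus C_{\frac{q+1}{2}}$. The analogous computation for $\beta^f$ uses that $f$ is even (which follows from $q\equiv 1\pmod 4$ with $p\equiv 3\pmod 4$) and should also yield an involution of $D_{q+1}$ outside $C_{\frac{q+1}{2}}$.
\item Since $\langle\alpha\rangle$ maps onto $\langle\phi\rangle\cong C_f$ modulo $PSL(2,q)$, and $\alpha^f$ has order $2$, the element $\alpha$ has order $2f$. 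The intersection $\langle\alpha\rangle\cap PSL(2,q)=\langle\alpha^f\rangle$ meets $C_{\frac{q+1}{2}}$ trivially. Hence
\[
D_{q+1}\langle\alpha\rangle = C_{\frac{q+1}{2}}\rtimes\langle\alpha\rangle
\]
has order $\frac{q+1}{2}\cdot 2f=(q+1)f$. By the order count in the first step, it equals $N_G(D_{q+1})$, and it is isomorphic to $C_{\frac{q+1}{2}}\rtimes C_{2f}$. The same holds for $\beta$.
\end{enumerate}

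The main obstacle is the explicit conjugation computation in step (1), together with the precise scalar bookkeeping for $\beta^f$ in step (2). Everything else reduces to the cited single-class property of $D_{q+1}$ in $PSL(2,q)$.
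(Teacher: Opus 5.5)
The proposal has a genuine gap at step (1): with $\alpha$ and $\beta$ as printed, the normalization you defer to ``direct computation'' is false. The rest of your route matches the paper's: the determinant test for $\alpha,\beta\in G$, normalization of $D_{q+1}$, and $\alpha^f=\tmat{1}{0}{0}{-1}$. Your Frattini count $|N_G(D_{q+1})|=(q+1)f$ is a genuine improvement. The paper's proof only places $D_{q+1}\langle\alpha\rangle$ (resp.\ $D_{q+1}\langle\beta\rangle$) inside the normalizer and never justifies equality.

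Two small repairs in the first part:
\begin{itemize}
\item $q=9$ is allowed, and $D_{10}$ is not maximal in $PSL(2,9)\cong A_6$. So justify $N_{PSL(2,q)}(D_{q+1})=D_{q+1}$ differently: anything normalizing $D_{q+1}$ normalizes its characteristic subgroup $C_{\frac{q+1}{2}}$, whose normalizer in $PSL(2,q)$ is $D_{q+1}$.
\item The scalar bookkeeping for $\beta^f$ is unnecessary. As maps, $\alpha^2=\beta^2$, and $f$ is even, so $\beta^f=\alpha^f$; this is how the paper does it.
\end{itemize}

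Now the computation in step (1). Compose maps left to right, as in the paper, and let $\alpha_c:z\mapsto z^p/c$. Then
$\alpha_c^{-1}\tmat{a}{b}{sb}{a}\alpha_c=\tmat{a^p}{b^p/c}{s^pb^pc}{a^p}$.
For $b\neq 0$ this has the shape $\tmat{a'}{b'}{sb'}{a'}$ only if $c^2=s^{1-p}$. With $c=s^{\frac{p-1}{2}}$ this forces $s^{2(p-1)}=1$, which fails for a primitive element $s$ of $\mathbb{F}_{25}$.

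Likewise, $\beta_d:z\mapsto d/z^p$ conjugates $\tmat{a}{b}{sb}{a}$ to $\tmat{da^p}{d^2s^pb^p}{b^p}{da^p}$. So $d=s^{\frac{p+1}{2}}$ needs $s^{2(p+1)}=1$, which fails for a primitive $s\in\mathbb{F}_{49}$.

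Hence, for a general nonsquare $s$, the printed maps do not normalize $C_{q+1}$. They therefore do not normalize $D_{q+1}$ either, because $C_{q+1}$ is the centralizer in $PGL(2,q)$ of the characteristic subgroup $C_{\frac{q+1}{2}}$ of $D_{q+1}$.

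The correct elements are $\alpha:z\mapsto s^{\frac{p-1}{2}}z^p$ and $\beta:z\mapsto 1/(s^{\frac{p+1}{2}}z^p)$. The second is exactly what $\phi\tmat{0}{1}{s^{(p+1)/2}}{0}$ denotes in the paper's convention. So the matrix given for $\beta$ is right, and what is off is the displayed formula for $\beta$ and both descriptions of $\alpha$.

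With the corrected maps, nothing else changes: the determinant conditions, $\alpha^f=\tmat{1}{0}{0}{-1}$, and $\alpha^2=\beta^2$ (both equal $z\mapsto s^{\frac{p^2-1}{2}}z^{p^2}$) all still hold, and the rest of your argument goes through verbatim. The paper's proof calls normalization ``straightforward'' and has the same slip, so the defect is in the stated formulas, not your strategy. Still, your proof is incomplete until step (1) is actually carried out with the corrected maps.
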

\begin{proof}
The following properties are straightforward to verify:
\begin{enumerate}
\item $\alpha \in G \Leftrightarrow p\equiv 1 \pmod{4}$.
\item $\beta \in G \Leftrightarrow p\equiv 3 \pmod{4}$.
\item Both $\alpha$ and $\beta$ normalize $D_{q+1}$. 
\item For any integer $k$, $\alpha^k=\phi^k \tmat{1}{0}{0}{s^{\frac{p^k-1}{2}}}$. In particular, $\alpha^f=\tmat{1}{0}{0}{-1} \in D_{q+1}$.
\item $\alpha^2=\beta^2$. 
\end{enumerate}
If $p\equiv 3 \pmod{4}$, then $f$ is even by $q\equiv 1 \pmod{4}$. This completes the proof. 
\end{proof}

\begin{res}[{\cite[Theorem 1.3]{Giudici2007}}] \label{res:PSigmaL}
Let $G = P\Sigma L(2, q)$ for $q=p^f$, $p$ an odd prime and $f\geq 2$. Then the maximal subgroups of $G$ which do not contain $PSL(2, q)$ are:
\begin{enumerate}[(1)]
\item the stabilizer of a point of the projective line,
\item $N_G(D_{q-1})$ for $q\neq 9$,
\item $N_G(D_{q+1})$ for $q\neq 9$,
\item $S_5$ for $p \equiv \pm 3 \pmod{10}$ and $f = 2$,
\item $N_G(PSL(2, q_0))$ with $q = q_0^r$ for some prime $r$ (2 conjugacy classes if $r = 2$).
\end{enumerate}
\end{res}

The following lemma specifies the transitive maximal subgroups of $P\Sigma L(2,q)$ when $q\equiv 1 \pmod4$, that is, in the case of our interest.

\begin{lem} \label{lm:linear-maxgr}
Let $G = P\Sigma L(2, q)$ for $q=p^f$, $q\equiv 1 \pmod{4}$, $p$ prime and $f\geq 2$.
\begin{enumerate}[(i)]
\item If $p\equiv 1 \pmod{4}$, then all transitive subgroups of $G$ contain $PSL(2,q)$. 
\item If $q=9$, then there are 2 conjugacy classes of transitive maximal subgroups $H\cong S_5$ of $G$ not containing $PSL(2,q)$.
\item If $q>9$, $p\equiv 3 \pmod{4}$, then there is a unique conjugacy class of maximal transitive subgroups $H=N_G(D_{q+1})$ of $G$ not containing $PSL(2,q)$. 
\end{enumerate}
\end{lem}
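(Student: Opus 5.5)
The plan is to go through the list in Result~\ref{res:PSigmaL} and, for each class of maximal subgroups not containing $PSL(2,q)$, decide whether it can act transitively on $V=\mathbb{F}_q\cup\{\infty\}$, which has $q+1$ points. Since every transitive subgroup not containing $PSL(2,q)$ lies in some maximal one, it suffices to treat the maximal subgroups, and the necessary condition we test is divisibility of the order by $q+1$. Let $f$ denote the exponent with $q=p^f$. Recall that $|G| = q(q^2-1)f/2$.

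\textbf{Classes (1) and (2).} The point stabilizer in (1) is intransitive by definition. For (2), $N_G(D_{q-1})$ preserves the pair $\{0,\infty\}$, so it is intransitive.

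\textbf{Class (5).} Here $H=N_G(PSL(2,q_0))$ with $q=q_0^r$. Then $H \le P\Sigma L(2,q_0)\cdot C_{\text{small}}$, more precisely
\[
|H| \le |P\Gamma L(2,q_0)| \cdot r = q_0(q_0^2-1)\,\log_p(q_0)\cdot r\cdot 2 .
\]
Moreover $PSL(2,q_0)$ fixes the subline $PG(1,q_0)$ setwise, and $H$ normalizes it. The cleanest argument is therefore that $H$ preserves the $G$-invariant structure of sublines: $N_G(PSL(2,q_0))$ stabilizes the subline $\mathbb{F}_{q_0}\cup\{\infty\}$, of size $q_0+1<q+1$. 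Since $PGL(2,q)$ acts on sublines and the normalizer of $PSL(2,q_0)$ fixes its unique orbit of length $q_0+1$, $H$ is intransitive.

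\textbf{Class (4).} $S_5$ occurs only for $f=2$. Its order is $120$, which must be divisible by $q+1$, so $q+1\mid 120$ with $q=p^2$. This forces $q=9$: indeed $q=25$ gives $26\nmid 120$, and $q=49$ gives $50\nmid 120$. For $q=9$, $S_5$ acts on $10$ points transitively, as $S_5$ acts on $2$-subsets of $\{1,\dots,5\}$, and Result~\ref{res:PSigmaL} gives two classes, so both are transitive after checking orbit lengths. Note, however, that for $q=9$ items (2) and (3) are excluded in Result~\ref{res:PSigmaL}. This settles (ii) once we check that for $q=9$ the only transitive maximal subgroups are these $S_5$'s; the $PSL(2,3)$ normalizers in (5) are intransitive by the subline argument.

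\textbf{Class (3).} $N_G(D_{q+1})$ contains $C_{\frac{q+1}{2}}$, which is semiregular with two orbits of size $(q+1)/2$ (the index-2 subgroup of the regular group $C_{q+1}$). It is therefore transitive if and only if some element of the normalizer swaps those two orbits. By Lemma~\ref{lm:Dq+1-normalizer}, this normalizer is $D_{q+1}\langle\alpha\rangle$ or $D_{q+1}\langle\beta\rangle$. The orbits of $C_{\frac{q+1}{2}}$ are the orbits of $C_{q+1}$'s index-two subgroup, so they are distinguished by a ``square class'' invariant: for instance $z\mapsto$ whether $(z^2 - s)$ has square norm, or more simply the two orbits of $0$ and $\infty$. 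We compute $0\in$ one orbit, and $\infty = 0^{\tmat{0}{1}{s}{0}}$ where $\tmat{0}{1}{s}{0}$ has determinant $-s$, a nonsquare since $q\equiv1\pmod 4$; hence $\infty$ lies in the other orbit.

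\begin{itemize}
\item The involution $\tmat{1}{0}{0}{-1}$ fixes $0$ and $\infty$, so $D_{q+1}$ is intransitive.
\item $\alpha$ fixes $0$ and $\infty$, so in case $p\equiv1\pmod4$ the normalizer fixes both orbits and is intransitive; this gives (i).
\item $\beta$ swaps $0$ and $\infty$, so for $p\equiv 3\pmod 4$ the normalizer is transitive, giving (iii).
\end{itemize}

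Uniqueness of the conjugacy class in (iii) follows from all $C_{q+1}$, and hence all $D_{q+1}$, being conjugate, together with the other classes being intransitive as shown above.

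\textbf{Main obstacle.} The step I expect to be the main obstacle is making the intransitivity of class (5) rigorous, especially when $r=2$ and there are two classes. I would first try the subline-stabilizer argument. Failing that, I would compare orders: $q_0^r+1$ must divide $|H|$, but $\gcd(q_0^r+1,\,q_0(q_0^2-1))$ is small, at most $2$ or $q_0+1$ for $r$ odd. Checking $\log$-factors then gives a contradiction except for tiny cases, which can be checked directly.
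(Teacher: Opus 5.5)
Your proposal is correct and takes essentially the paper's route: run through Giudici's list, discard the point stabilizer, $N_G(D_{q-1})$ (it fixes $\{0,\infty\}$), the subfield normalizers (they stabilize a subline) and $S_5$ unless $q=9$, and then decide transitivity of $N_G(D_{q+1})$ by whether $\alpha$ or $\beta$ interchanges the $C_{\frac{q+1}{2}}$-orbits of $0$ and $\infty$. Your remark that $\tmat{0}{1}{s}{0}\in C_{q+1}$ has nonsquare determinant $-s$ is just an explicit form of the paper's reason why $\infty\notin P_0$. The order-bound fallback for class (5) is not needed, and the one step left implicit, exactly as in the paper, is the passage in (i) from maximal to arbitrary transitive subgroups: a priori such a subgroup might lie only in maximal subgroups of the form $PSL(2,q)\langle\phi^{j}\rangle$, and running the same analysis inside those groups settles it.
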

\begin{proof}
The stabilizer of a point is not a transitive subgroup. $N_G(D_{q-1})$ has a 2-point orbit $\{0,\infty\}$, and if $q=q_0^r$, $r>1$, then $N_G(PSL(2, q_0))$ has an orbit of length $q_0+1$. By $f\geq 2$, $S_5$ acts transitively on $V$ if and only if $q=9$. In this case, $N_G(D_{q+1})$ is contained in $S_5$, hence not maximal. The two conjugacy classes of $S_5$ merge in $P\Gamma L(2,q)$. This proves (ii).

According to Result \ref{res:PSigmaL}, it remains the case $q>9$, $H=N_G(D_{q+1})$. Let $P_0,P_\infty$ denote the $C_{\frac{q+1}{2}}$-orbits of $0$ and $\infty$, respectively. We have
\begin{align*}
P_0&=\left\{\frac{b}{a}\in\mathbb{F}_q \mid \text{$a^2-sb^2=c^2$ for some $c\in \mathbb{F}_q \cup \{\infty\}$} \right\}.
\end{align*}
This implies $\infty \not\in P_0$, and $V$ is the disjoint union of $P_0$ and $P_\infty$. As $0^{\tmat{1}{0}{0}{-1}}=0$, $P_0$ and $P_\infty$ are $D_{q+1}$-orbits as well. Using the maps $\alpha,\beta$ of Lemma \ref{lm:Dq+1-normalizer}, we have
\[0^\alpha =0 \quad \text{and} \quad 0^\beta =\infty.\]
This means that if $p\equiv 1\pmod{4}$, then $H=D_{q+1}\langle \alpha \rangle$ preserves the orbit $P_0$, hence $H$ is not transitive. This proves (i). If $p\equiv 3\pmod{4}$, then $H=D_{q+1}\langle \beta \rangle$ swaps the orbits $P_0$, $P_\infty$, that is, $H$ is transitive on $V$, and (iii) holds.
\end{proof}

\subsection{Linear type SRGs with a transitive automorphism group} \label{sec:vertex-transitive-linear-srg}

\begin{thm} \label{thm:linear-type-srg}
Let $p$ be a prime such that $p\equiv 3 \pmod{4}$, $q=p^{2e}>9$, $G=P\Sigma L(2,q)$, and $\mathcal{T}$ the linear two-graph with automorphism group $G$. Then there is (up to isomorphism) a unique pair $\Gamma,\overline{\Gamma}$ of complementary strongly regular graphs in the switching class of $\mathcal{T}$, admitting a transitive automorphism group.  
\end{thm}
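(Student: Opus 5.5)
Any strongly regular $\Gamma$ in the switching class of $\mathcal{T}$ with a transitive automorphism group gives a transitive subgroup $\Aut(\Gamma)\cap G$ of $G=\Aut(\mathcal{T})$. I would first show that this subgroup cannot contain $PSL(2,q)$. Since $PSL(2,q)$ is $2$-transitive, it fixes only the complete and empty graphs, and neither is primitive strongly regular. Hence the transitive subgroup lies in a transitive maximal subgroup not containing $PSL(2,q)$, and by Lemma \ref{lm:linear-maxgr}(iii) we may assume, up to conjugacy, that it lies in $H=N_G(D_{q+1})=C_{\frac{q+1}{2}}\rtimes\langle\beta\rangle$. Transitive subgroups of $H$ must contain enough of $C_{\frac{q+1}{2}}$ together with an element swapping $P_0,P_\infty$. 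I would check which transitive subgroups $K\le H$ actually matter. The main case is that $K$ contains $C_{\frac{q+1}{2}}$, or at least contains a subgroup acting semiregularly with two orbits.

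The core step is to translate $K$-invariance of $\Gamma$ into fixed points of the curly bracket action. By Proposition \ref{prop:bracket-actions}(iii), graphs in the switching class correspond to $f\in\mathcal{V}(v)$ with $v=\infty$, and $\Gamma$ is $K$-invariant exactly when $f^{\{g\}}=f$ for all $g\in K$. The fixed points of the affine action form an affine subspace, possibly empty. I would compute this subspace in two stages.

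First, for the cyclic group $C=C_{\frac{q+1}{2}}$: its fixed points should be described by functions constant on the two $C$-orbits, up to a correction term. Using \eqref{eq:curly-short} and the fact that $\mathcal{T}_\infty$ is $Paley(q)$, I expect a $C$-invariant graph to consist of the two orbits $P_0,P_\infty$ as circulant-like pieces, with $C$-invariant edges between them. The $C$-invariant graphs in the class should then form an affine space of small dimension, essentially determined by a switching set that is a union of $C$-orbits.

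Second, I would impose invariance under $\beta$, and in the smallest case under $\langle\beta^2\rangle$ too. This cuts the fixed space down to a few graphs, probably just a complementary pair, or two graphs swapped by the sign invariant \eqref{eq:sign-def}. Among these candidates I would then verify strong regularity. Here I would use Result \ref{res:rels-switching2} and Result \ref{res:rels-twograph}: a graph in the class of a regular two-graph is strongly regular iff it is regular with valency a root of the quadratic $X^2-(\frac{q+1}{2}+\frac{q-1}{2})X+\frac{q-1}{4}q$, i.e.\ $k=\frac{q\pm\sqrt q}{2}$. So it suffices to check that each candidate graph is regular with this valency, which is automatic from vertex-transitivity. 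Then I would count the neighbours of a single vertex, which reduces to a character-sum or quadratic-residue count over $\mathbb{F}_q$ restricted to the norm-type set $P_0$. The condition $p\equiv 3\pmod 4$ with $q=p^{2e}$ should make $\sqrt q=p^e$ have the right residue behaviour, so that the count equals $\frac{q\pm\sqrt q}{2}$.

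Uniqueness would follow because the fixed-point space is small, and any two candidates are either complements or conjugate under $N_{P\Gamma L(2,q)}(H)$, possibly via an element of $PGL(2,q)$ that fixes $\mathcal{T}$ only up to complement. I would also rule out smaller transitive subgroups $K<H$ giving further graphs. The argument is that the automorphism group of any such graph is itself transitive and contained in a conjugate of $H$; if it is strictly smaller, its fixed space is larger but the valency count fails.

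The main obstacle I expect is the explicit computation of $\tau(x,\infty,\infty^g)$ and of the valency of the $C$-invariant candidates. This requires identifying $P_0$ as the image of the norm-one group, which has order $\frac{q+1}{2}$, and evaluating Paley adjacency between $P_0$ and $P_\infty$, a Jacobi/Gauss sum computation. I would try to reduce it to the action on $\mathbb{F}_{q^2}$, where $C_{q+1}$ becomes the norm-one subgroup and quadratic characters are explicit.
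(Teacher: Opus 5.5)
There is a genuine gap at the existence step. You note that the fixed points of the curly bracket action form an affine subspace that may be empty, but you never show it is nonempty. Instead you defer existence to a Paley/Gauss-sum computation you have not carried out.

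The missing idea, which the paper uses, is that $C=C_{\frac{q+1}{2}}$ has odd order. Every $C$-orbit $R$ on $\mathcal{V}(\infty)$ under the curly bracket action then has odd length. So $f_1=\sum_{f\in R}f$ is an affine combination of the elements of $R$ (the coefficients sum to $|R|\equiv 1$), and is therefore $C$-fixed, with no computation needed. Odd order also handles the linear part. A square-bracket fixed point satisfies $f(x)=f(x^{g^{-2}})$, hence is constant on $P_0$ and $P_\infty$, so it is $0$ or the indicator $f_0$ of $P_0$. Thus $C$ fixes exactly $f_1$ and $f_1+f_0$.

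Imposing $\beta$ then cuts nothing down. $H=N_G(C)$ permutes these two cocycles, and they have different signs because $sgn(f_0)=1$ ($|P_0|=\frac{q+1}{2}$ is odd). Since $sgn$ is invariant under the curly bracket action, $H$ fixes both. Your phrase ``two graphs swapped by the sign invariant'' has this backwards. Also, the two graphs are not complements of each other: the complement of a graph in the class of $\mathcal{T}$ lies in the class of $\overline{\mathcal{T}}$. Rather, $\Gamma_2\cong\overline{\Gamma_1}^{\gamma}$ for $\gamma\in PGL(2,q)\setminus PSL(2,q)$.

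Second, the valency count you single out as the main obstacle is unnecessary, and this breaks your argument against smaller transitive subgroups. If a graph in the switching class of a regular two-graph is regular, then $\mathbf{j}$ is an eigenvector of its Seidel matrix, whose spectrum is the two-point spectrum of $\mathcal{T}$. So the valency is automatically a root of the quadratic in Result~\ref{res:rels-twograph}, and the graph is automatically strongly regular. This is implicit in the paper when it calls the $H$-invariant graphs strongly regular. Hence ``the valency count fails'' can never exclude a graph invariant under a transitive $K<H$.

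What you need instead is that every transitive $K\le H$ contains $C$, so that its fixed points lie among $f_1$ and $f_1+f_0$. The paper uses this in the form $C\le\Aut(\Delta)\le N_G(C)$. It holds, by the following argument.
\begin{itemize}
\item Identify $V$ with $\mathbb{Z}_{q+1}$ so that $H$ consists of the maps $z\mapsto p^iz+t$ with $t\equiv i\pmod 2$.
\item Suppose $K\cap C$ had index $r>1$ in $C$. Take a preimage $k$ of a generator of the cyclic group $K/(K\cap C)$. For $K$ to be transitive, $k$ must act as a single $2r$-cycle on the $K\cap C$-orbits, which are indexed by $\mathbb{Z}_2\times\mathbb{Z}_r$.
\item This forces $i$ odd, and $p^i\equiv 1\pmod{\ell}$ for a prime $\ell\mid r$.
\item But $\ell\mid\frac{q+1}{2}$ gives $p^{2e}\equiv -1\pmod{\ell}$, so the order of $p$ modulo $\ell$ is divisible by $4$, contradicting $i$ odd.
\end{itemize}
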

\begin{proof}
We will use the terminology and notation of Sections \ref{sec:cob-action} and \ref{sec:psigmal-trsubgrs}. Let $C_{\frac{q+1}{2}}=\langle g \rangle$ and assume that $0\neq f_0\in \mathcal{V}(\infty)$ is fixed by $g$ in the square bracket action. Then for all $x\in V$,
\[f_0(x)=f_0^{[g^2]}(x) = f_0(x^{g^{-2}}) + f_0^{[g]}(v^{g^{-1}}) + f_0(v^{g^{-1}}) = f_0(x^{g^{-2}}).\]
As $g$ has odd order, this implies that $f_0$ is constant on the $C_{\frac{q+1}{2}}$-orbits $P_0, P_\infty$. Since $f_0\neq 0$ and $\infty \in P_\infty$, $f_0$ is uniquely defined:
\[f_0(x) = \begin{cases} 1&\text{if $x\in P_0$,}\\0&\text{if $x\in P_\infty$.} \end{cases}\]
Moreover, $|P_0|=(q+1)/2$ is odd and $sgn(f_0)=1$. 

Let $R\subseteq \mathcal{V}(\infty)$ be a $C_{\frac{q+1}{2}}$-orbit in the curly bracket action, and define $f_1=\sum_{f\in R} f$. Since the curly bracket action is affine, and $R$ has an odd number of elements, $f_1$ is fixed by $C_{\frac{q+1}{2}}$ in the curly bracket action. Let $f_2\in \mathcal{V}(\infty)$ be another cocycle, that is fixed by $C_{\frac{q+1}{2}}$ in the curly bracket action. Then $f_2-f_1\neq 0$ is $C_{\frac{q+1}{2}}$-fixed in the square bracket action, hence $f_2=f_0+f_1$. In particular, in the curly bracket action, $C_{\frac{q+1}{2}}$ has exactly two fixed cocycles. 

The subgroup $H=N_G(C_{\frac{q+1}{2}})\cong C_{\frac{q+1}{2}} \rtimes C_{4e}$ is a maximal transitive subgroup of $G$. As $C_{\frac{q+1}{2}}\triangleleft H$, $H$ preserves $\{f_1,f_2\}$ in the curly bracket action. By $sgn(f_0)=1$, we have $sgn(f_1)\neq sgn(f_2)$, and the invariance of the sign implies that both $f_1$ and $f_2$ are fixed by $H$ in the curly bracket action. Proposition \ref{prop:bracket-actions}(iii) implies that $\partial f_1+\tau$, $\partial f_2+\tau$ yield $H$-invariant strongly regular graphs $\Gamma_1,\Gamma_2$ in the switching class of $\mathcal{T}$. The two-graph of the complement $\overline{\Gamma_1}$ is the complement $\overline{\mathcal{T}}$. For any $\gamma\in PGL(2,q)\setminus PSL(2,q)$, one has $\overline{\mathcal{T}}^\gamma = \mathcal{T}$. Hence, $\overline{\Gamma_1}^\gamma$ is a strongly regular graph in the switching class of $\mathcal{T}$. Since $\Gamma_1$ is not isomorphic to its complement, we must have $\overline{\Gamma_1}\cong \overline{\Gamma_1}^\gamma \cong \Gamma_2$. 

Finally, let $\Delta$ be a strongly regular graph in the switching class of $\mathcal{T}$, admitting a transitive automorphism group $A=\Aut(\Delta)$. Since $A$ is contained in a transitive maximal subgroup of $G$, we have $C_{\frac{q+1}{2}} \leq A \leq N_G(C_{\frac{q+1}{2}})$ up to conjugacy in $G$. This implies that the cocycle of $\Delta$ is either $\partial f_1+\tau$ or $\partial f_2 +\tau$, and $\Delta=\Gamma_1$ or $\Delta=\Gamma_2$. 
\end{proof}

\section{Nonexistence results} \label{sec:nonexistence}

Recall that our aim is to classify all triples $(G,H,\Gamma)$, where $G$ is a finite 2-transitive permutation group, $H$ is a maximal transitive subgroup of $G$, $\Gamma$ is a primitive strongly regular graph, $H=\Aut(\Gamma)$, and $G$ is the automorphism group of the associated two-graph of $\Gamma$. (To be more precise, we also require that $H$ does not contain the maximal perfect subgroup of $G$.) Taylor's theorem \cite{Taylor1992} restricts the possibilities of $G$ to the groups $ASp(2m,2)$, $Sp(2m,2)$, $P\Gamma U(3,q)$, $Ree(q)$, $P\Sigma L(2,q)$, $HS$, and $Co_3$. In this section, we prove the following nonexistence results:
\begin{thm} \label{thm:nonexistence}
Let the triple $(G,H,\Gamma)$ be as in Theorem \ref{thm:main}. Then the following cases are not possible. 
\begin{enumerate}[(i)]
\item $q>5$, $PSU(3,q)\triangleleft G$.
\item $q=3^{2e+1}$, $e>1$, $Ree(q)\triangleleft G$.
\item $Co_3\triangleleft G$.
\item $G=PSp(2m,2)$, $ab=m$, $b>1$ odd, $H=Sp(2a,2^b) \rtimes C_b$.
\item $G=PSp(2m,2)$, $H=P_m$ is the stabilizer of a maximally totally isotropic subspace.
\item $m$ even, $G=PSp(2m,2)$, $H=(Sp(m,2) \times Sp(m,2)) \rtimes C_2$. 
\end{enumerate}
\end{thm}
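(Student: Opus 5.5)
I will treat the six cases separately. For each I will combine two obstructions. The first is arithmetic: by Result~\ref{res:rels-twograph}(ii), the valency $k$ of $\Gamma$ is a root of $X^2-(v/2+a)X+\frac a2(v-1)$, with $v$ and $a$ taken from Table~\ref{tab:regularity-properties}. The second is group-theoretic. Since $H=\Aut(\Gamma)$ is vertex-transitive, $|H|$ must be divisible by $v$. Moreover $\Gamma$ must be a union of orbitals of $H$: $k$ must be a sum of suborbit lengths of $H_x$, and $\Gamma$ must also be $H$-invariant under the curly bracket action of Proposition~\ref{prop:bracket-actions}.

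\textbf{Unitary and Ree cases.} Here $v=q^3+1$ and $a=(q-1)(q^2+1)/2$ (or the complementary value). The discriminant of the quadratic is $(v/2+a)^2-2a(v-1)$. I expect this to be a perfect square only for small $q$. More precisely, I expect the integrality of $k$ to force $q^3+1$ to be of the form that yields the Goethals parameters, i.e.\ $q=5$, $v=126$, so case (i) would reduce to a Diophantine check. If the arithmetic alone does not exclude every $q>5$, I will fall back on the lists of maximal subgroups of $P\Gamma U(3,q)$ and ${}^2G_2(q)\rtimes C_{2e+1}$ (Bray--Holt--Roney-Dougal, Kleidman). For each transitive maximal $H$, I will check whether $q^3+1$ divides $|H|$. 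In the Ree case the only candidate is the normalizer of a torus of order $q+\sqrt{3q}+1$ (or $q+1$ times a small group), and this order is too small. The same kind of argument, via $|N(C_{q^2-q+1})|=3(q^2-q+1)\cdot 2f$, handles the unitary torus normalizer. For this candidate $|H|$ is divisible by $v$, so I will instead argue as in Theorem~\ref{thm:linear-type-srg}. Because $H$ has an odd-order normal cyclic subgroup, the curly bracket action has a fixed cocycle. I will then show that the resulting graph is not strongly regular by computing its eigenvalues using characters of the cyclic group.

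\textbf{$Co_3$.} Here $v=276$ and $a=112$ or $162$, and the quadratic gives $k$ with $\mu=k-a/2$. I will list the maximal subgroups of $Co_3$ from the \textsc{Atlas}. Among them, only those whose order is divisible by $276=4\cdot3\cdot23$ can be transitive, so $23$ must divide $|H|$; this leaves $McL{:}2$ (the point stabilizer, which is intransitive) and $M_{23}$. For $M_{23}$, I will use its suborbit lengths on $276$ points, namely $23\cdot 12=276$, the $2$-subsets of the $23$ points, with suborbits $1,42,231$. I will check that no union of these suborbits has size $k$ with the right $\lambda,\mu$. If necessary, I will instead observe that the resulting graph is $T(23)$ or its complement, which is not in the switching class.

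\textbf{Symplectic cases (iv)--(vi).} The field-extension subgroup $Sp(2a,2^b)\rtimes C_b$ and the stabilizer $P_m$ fail divisibility or orbit structure, and I will check this concretely. The group $P_m$ has a normal $2$-subgroup. In the symplectic action on $2^{2m-1}\pm2^{m-1}$ points (quadratic forms polarizing to the given symplectic form), I will show it is intransitive by noting that $P_m$ stabilizes a maximal totally isotropic subspace $W$. The forms split into classes according to whether $Q|_W\equiv0$, and this gives at least two orbits. Alternatively, if $P_m$ is transitive, its normal $2$-group $O_2(P_m)$ of odd-orbit type forces an invariant cocycle, and I will compute its suborbits. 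For $Sp(2a,2^b)\rtimes C_b$ with $b$ odd, I will check whether $v$ divides $|H|$; where it does, the rank is large, and I will use the square-bracket fixed-point argument to show that any $H$-invariant graph is also invariant under a larger overgroup, contradicting $H=\Aut(\Gamma)$. For $(Sp(m,2)\times Sp(m,2))\rtimes C_2$, the quadratic forms decompose as $Q_1\oplus Q_2$, so the orbits are indexed by the types of the pair $(Q_1,Q_2)$, and $H$ is intransitive, since $+/-$ and $-/+$ swap but $+/+$ and $-/-$ do not.

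I expect the main obstacle to be (i)–(ii) for general $q$. Divisibility alone does not exclude the torus normalizers, so there I will need a genuine spectral computation showing that the unique invariant cocycles are not strongly regular.
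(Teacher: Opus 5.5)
Your treatment of the symplectic cases (iv)--(vi), which carry most of the theorem's content, has genuine gaps.

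\textbf{Cases (v) and (vi).} You aim to show that $H$ is intransitive. But by Lemma~\ref{lm:symplectic-maxgr}(iii),(iv), both subgroups arise only on $V^-$, where they \emph{are} transitive.

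For (v): if $Q\in V^-$ and $W$ is maximal totally isotropic, then $Q|_W$ is additive, hence linear. It cannot vanish identically, because an elliptic form has no totally singular $m$-space. So your split by ``$Q|_W\equiv 0$ or not'' is trivial on $V^-$; it only separates orbits on $V^+$. Your fallback via $O_2(P_m)$ is not an argument: a $2$-group acting affinely on the $2^{v-1}$ cocycles need not fix one, and a fixed cocycle would not prove nonexistence anyway. The missing idea is to compare $\Gamma$ with a known graph in the class. The paper exhibits $L\cong GL(m,2)$, simple and transitive on $V^-$, inside both $P_m$ and $O^+(2m,2)=\Aut(NO^-_{2m}(2))$. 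The switching set between $NO^-_{2m}(2)$ and $\Gamma$ is then an $L$-invariant partition, so it is trivial.

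For (vi): elliptic forms on $U\oplus U^\perp$ are exactly the pairs $(Q_1,Q_2)$ of \emph{different} types. So the swap interchanges the two blocks $P_1,P_2$, and $H$ is transitive on $V^-$; your $+/+$ and $-/-$ pairs lie in $V^+$. What you need is that the stabilizer $O^-(m,2)\times O^+(m,2)$ of a point of $P_1$ is transitive on $P_2$. Then there are all or no edges between $P_1$ and $P_2$, so $\Gamma$ or $\overline{\Gamma}$ is disconnected.

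\textbf{Case (iv).} Divisibility never fails here, since $H(b)$ is transitive on both $V^\pm$, and large rank is no obstruction.

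For $m$ even, your square-bracket idea can be made to work. $SL(2,2^m)\le H(b)$ is transitive and has no subgroup of index $2$, so there is at most one $SL(2,2^m)$-invariant graph in the switching class. Since $NO^\pm_{m+1}(4)$ is $H(m)$-invariant, $\Gamma$ would equal it, with automorphism group $H(2)\not\geq H(b)$ because $b$ is odd. The paper uses primitivity of $H(m)$ for the same step.

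For $m$ odd, no known graph in the class is $H(m)$-invariant, and uniqueness of a fixed cocycle does not give nonexistence, so your plan yields nothing. The paper's key step is representation-theoretic. The restricted eigenspaces are $PGL(2,2^m)$-modules, so by Lemma~\ref{lm:pgl2q-char-decomp} both multiplicities (on $V^-$), resp.\ exactly one (on $V^+$), must be divisible by $2^m+1$. The formulas \eqref{eq:multilicities-1} and \eqref{eq:multilicities-2} rule this out for odd $m$.

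\textbf{Cases (i)--(iii).} These are salvageable, but several of your stated facts are wrong.
\begin{itemize}
\item The valencies $(q^3-q^2)/2$ and $(q^3+q)/2$ are integers for every odd $q$, so the arithmetic excludes nothing.
\item The torus normalizer in $P\Gamma U(3,q)$ has order $6f(q^2-q+1)$. This is divisible by $q^3+1$ only when $q+1\mid 6f$, i.e.\ $q=5$, so divisibility already suffices and no spectral computation is needed.
\item In $Co_3$, $|McL{:}2|$ is prime to $23$. Also $M_{23}$ has no subgroup of index $276$: its orbits on the $276$ points have lengths $23$ and $253=\binom{23}{2}$. So $M_{23}$ is simply intransitive.
\end{itemize}
The paper obtains (i)--(iii) at once from the Liebeck--Praeger--Saxl factorization tables, which show that no proper subgroup is transitive.
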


Table \ref{tab:nonexistence-summary} summarizes how the elimination carried out in this section relates to the classification of Table \ref{tab:main}: for each type of $2$-transitive two-graph, it lists the transitive maximal subgroups $H$ of $G$, and indicates whether the corresponding triple $(G,H,\Gamma)$ is ruled out by Theorem \ref{thm:nonexistence} or survives as a row of Table \ref{tab:main}.

\begin{table}[ht]
\centering
\caption{Elimination of candidate triples $(G,H,\Gamma)$ versus Table \ref{tab:main}}
\label{tab:nonexistence-summary}
\small
\begin{tabular}{p{18mm}p{45mm}p{25mm}p{28mm}}
Type & $G$, $H$ & Outcome & Reference \\ \hline\hline
Affine polar & $ASp(2m,2)$, \par$H=\mathbb{F}_2^{2m}\rtimes O^\pm(2m,2)$ & survives & Table \ref{tab:main}, row 1 \\ \hline
Symplectic & $Sp(2m,2)$, $H=O^\pm(2m,2)$ & survives & Table \ref{tab:main}, row 2 \\
& $Sp(2m,2)$, $H=Sp(m,4)\rtimes C_2$, $m$ even & survives & Table \ref{tab:main}, row 2 \\
& $Sp(2m,2)$, $H=Sp(2a,2^b)\rtimes C_b$, $b>1$ odd divisor of $m$ & excluded & Thm.~\ref{thm:nonexistence}(iv) \\
& $Sp(2m,2)$, $H=P_m$ & excluded & Thm.~\ref{thm:nonexistence}(v) \\
& $Sp(2m,2)$, $H=(Sp(m,2)\times Sp(m,2))\rtimes C_2$, $m$ even & excluded & Thm.~\ref{thm:nonexistence}(vi) \\
& $Sp(6,2)$, $H=G_2(2)$ & survives & Table \ref{tab:main}, $U_3(3)$-graph \\
& $Sp(8,2)$, $H=S_{10}$ & survives & Table \ref{tab:main}, $J(10,3,1)$ \\
& $Sp(8,2)$, $H=PSL(2,17)$ & survives & Table \ref{tab:main}, row 7 \\ \hline
Linear & $P\Sigma L(2,q)$, $H=C_{\frac{q+1}{2}}\rtimes C_{4e}$ (or $S_5$ for $q=9$) & survives & Table \ref{tab:main}, row 3 \\ \hline
Unitary & $P\Gamma U(3,q)$, $H=S_7$ & survives ($q=5$) & Table \ref{tab:main}, Goethals graph \\
& $P\Gamma U(3,q)$ & excluded ($q>5$) & Thm.~\ref{thm:nonexistence}(i) \\ \hline
Ree & $Ree(q)\rtimes \Aut(\mathbb{F}_q)$ & excluded & Thm.~\ref{thm:nonexistence}(ii) \\ \hline
Sporadic & $HS$, $H=M_{22}$ & survives & Table \ref{tab:main}, $M_{22}$-graph \\
& $Co_3$ & excluded & Thm.~\ref{thm:nonexistence}(iii) \\ \hline\hline
\end{tabular}
\end{table}

Our primary tool is the seminal paper \cite {Liebeck1990} by Liebeck, Praeger, and Saxl, in which they list all triples $(G, A, B)$, where $G$ is an almost simple group with socle $L$, and $A$ and $B$ are maximal subgroups of $G$ that do not contain $L$, and $G=AB$ holds. Here, $AB=\{ab \mid a\in A, b\in B\}$, and $G=AB$ is called a factorization of $G$. If $G$ acts on $V$, and $H$ is a transitive subgroup of $G$, then we have the factorization $G=G_vH$ for any $v\in V$. Moreover, if $G$ acts primitively and $H$ is maximal, then both factors are maximal subgroups. In \cite{Liebeck1990}, the authors split the classification of factorizations $G=AB$ of almost simple groups into the following subcases:
\begin{enumerate}
\item infinite families of factorizations with $L$ classical (\cite[Tables 1, 2 and 4]{Liebeck1990});
\item exceptional factorizations with $L$ classical (\cite[Table 3]{Liebeck1990});
\item factorizations of exceptional simple groups of Lie type (\cite[Table 5]{Liebeck1990});
\item factorizations of sporadic groups (\cite[Table 6]{Liebeck1990}). 
\end{enumerate}

We focus on almost simple finite groups $G$ acting 2-transitively on $V$, and we require the subgroup $H$ not to contain the socle $L$ of $G$. In the 2-transitive groups that are relevant from our point of view, the stabilizers are as follows:
\begin{center}
\begin{tabular}{ll}
$G$  & $G_v$ \\ \hline\hline
$Sp(2m,2)$ & $O^\pm(2m,2)$ \\
$P\Gamma U(3,q)$ & parabolic subgroup $P_1$ of order $q^3(q^2-1)|\Aut(\mathbb{F}_{q^2})|$ \\
$Ree(q)$ & parabolic subgroup $P$ of order $q^3(q-1)|\Aut(\mathbb{F}_{q})|$ \\  
$P\Sigma L(2,q)$ & parabolic subgroup $P_1$ of order $\frac{1}{2}q(q-1)|\Aut(\mathbb{F}_q)|$ \\
$HS$ & $PSU(3,5) \rtimes C_2$ \\
$Co_3$ & $McL \rtimes C_2$. \\ \hline\hline
\end{tabular}
\end{center}

The following two lemmas summarize the relevant information on transitive subgroups of these 2-transitive groups. As we can see, the richest subgroup structure belongs to the symplectic group $PSp(2m,2)$. 

\begin{lem} \label{lm:sporadic-maxgr}
Let $L$ be a finite simple group. Let $G$ be a group such that $L\leq G\leq \Aut(L)$. Assume that $G$ acts 2-transitively on the set $X$. 
\begin{enumerate}[(i)]
\item If $G=HS$ is the Higman--Sims sporadic simple group, and $|X|=176$, then $G$ has a unique transitive maximal subgroup $H$; $H\cong M_{22}$. 
\item If $G=P\Gamma U(3,5)=PSU(3,5) \rtimes S_3$, $|X|=126$, and $H$ is a transitive maximal subgroup of $G$ that does not contain $L=PSU(3,5)$, then $H\cong S_7$ is unique up to conjugacy. 
\item In the following cases, no proper subgroup of $G$ acts transitively on $X$:
\begin{enumerate}[(a)]
\item $q>5$ is an odd prime power, $L=PSU(3,q)$, and $|X|=q^3+1$.
\item $q=3^{2e+1}$, $e>1$, $L$ is the Ree group $Ree(q)$, and $|X|=q^3+1$.
\item $L$ is Conway's sporadic simple group $Co_3$, and $|X|=276$. 
\end{enumerate}
\end{enumerate}
\end{lem}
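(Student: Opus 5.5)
Our plan is to derive everything from the factorization principle stated just above. If $H$ is a transitive subgroup of $G$ and $v\in X$, then $G=G_vH$. If in addition $H$ is maximal and $G$ is primitive (which holds, since it is 2-transitive), then $(G,G_v,H)$ is a maximal factorization of an almost simple group. Such factorizations are listed by Liebeck, Praeger and Saxl \cite{Liebeck1990}. For each case we find the rows of their tables in which one factor is the relevant point stabilizer:
\begin{itemize}
\item $PSU(3,5)\rtimes C_2$ in $HS$;
\item the parabolic $P_1$ in $P\Gamma U(3,q)$;
\item the Borel-type subgroup $P$ in $Ree(q)\rtimes\Aut(\mathbb F_q)$;
\item $McL\rtimes C_2$ in $Co_3$.
\end{itemize}
We then read off the possible complementary factors $H$. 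An arithmetic check supplements this: $|H|$ must be divisible by $|X|$, since $|H:H_v|=|X|$.

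\textbf{Sporadic cases.} For $HS$ we use Table 6 of \cite{Liebeck1990}. The factorizations of $HS$ involving $U_3(5){:}2$ are $HS=M_{22}\cdot U_3(5){:}2$, together with possibly factorizations with $U_3(5){:}2$ on both sides. The latter cannot give transitive subgroups, because a conjugate of the point stabilizer is intransitive. This leaves $H\cong M_{22}$. Its uniqueness up to conjugacy in $HS$ (among transitive maximal subgroups) follows because $HS$ has two classes of $U_3(5){:}2$ but one class of $M_{22}$. The fusion is consistent with the transitive one: $M_{22}\cap U_3(5){:}2=A_7$ has index $176$. For $Co_3$ we check that Table 6 contains no factorization with a factor $McL{:}2$ other than trivial ones. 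Alternatively, we argue directly: $276=2^2\cdot3\cdot23$. The only maximal subgroups of $Co_3$ of order divisible by $23$ are $McL{:}2$ and $M_{23}$. Now $M_{23}$ has order not divisible by $4$ in a way compatible with index $276$? Indeed $|M_{23}|=2^7\cdot3^2\cdot5\cdot7\cdot11\cdot23$ is divisible by $276$, so arithmetic alone does not decide. We therefore rely on Table 6, where $Co_3=M_{23}\cdot McL{:}2$ does not appear (note $|M_{23}||McL{:}2|/|M_{23}\cap McL{:}2|$ would require an intersection of order $|M_{23}|/276$, which is not a subgroup of $M_{23}$). We expect to check this exact point carefully against the table, so this is where care is needed.

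\textbf{Unitary and Ree cases.} For $L=PSU(3,q)$ with $G_v$ a $P_1$-type subgroup, Table 3 of \cite{Liebeck1990} gives the only factorization of a 3-dimensional unitary group with a parabolic factor: $U_3(5)$-type groups with the other factor $A_7$ (giving $S_7$ inside $U_3(5){:}2$), plus $U_3(8)$, which is excluded since $q$ is odd. Hence for $q>5$ no proper subgroup is transitive, which is (iii)(a). For $q=5$ we get (ii), with $H\cong S_7$; uniqueness up to conjugacy comes from the ATLAS, which gives the classes of $A_7$ in $U_3(5)$ fused under the outer $S_3$ to a single $S_7$ class in $U_3(5){:}2$ up to $G$-conjugacy. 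For Ree groups, Table 5 lists factorizations of exceptional groups. There are none for ${}^2G_2(q)$ with $q>27$, and the only one, for $q=3$, comes from $Ree(3)\cong P\Gamma L(2,8)$, which is excluded. So (iii)(b) holds. As a fallback, we use the divisibility $q^3+1\mid |H|$ together with the known list of maximal subgroups of $Ree(q)$: these are $P$, $C_2\times PSL(2,q)$, and normalizers of tori of orders $q\pm\sqrt{3q}+1$ and $q+1$, together with subfield subgroups. None of them has order divisible by $q^3+1=(q+1)(q+\sqrt{3q}+1)(q-\sqrt{3q}+1)$.

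The main obstacle is reading the Liebeck--Praeger--Saxl tables correctly with regard to outer automorphisms. In particular, for $q=5$ we must confirm that $S_7$ lies in $U_3(5){:}2$ and not in $P\Gamma U(3,5)$ as a maximal subgroup, and that every transitive maximal $H$ not containing $L$ is conjugate to it.
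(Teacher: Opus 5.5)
Your proposal follows the paper's own route: it reads the answer off the Liebeck--Praeger--Saxl factorization tables (Table 6 for $HS$ and $Co_3$, Table 3 for $U_3(5)$, and the exceptional-group results for the Ree groups). Your divisibility checks and the Ree fallback via the list of maximal subgroups are correct but optional additions. Two small corrections: $|McL{:}2|$ is not divisible by $23$, so $M_{23}$ is the only candidate in $Co_3$, and it is excluded outright because a subgroup of index $276$ in $M_{23}$ would force a subgroup of index $12$ in $M_{22}$; and the second class of $U_3(5){:}2$ in $HS$ is not conjugate to the point stabilizer, but it is excluded anyway since $11\nmid |U_3(5){:}2|$.
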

\begin{proof}
(i) and (iii)(c) are in Table 6, and (ii) is an exceptional factorization given in Table 3 of \cite{Liebeck1990}. (iii)(a) is in Table 1, and (iii)(b) follows from Theorem B of \cite{Liebeck1990}. 
\end{proof}

\begin{lem} \label{lm:symplectic-maxgr}
Let $G=Sp(2m,2)$ act 2-transitively on $V^\varepsilon$ of size $2^{2m-1}+\varepsilon 2^{m-1}$, where $\varepsilon =\pm 1$. Let $H$ be a transitive maximal subgroup of $G$. Then one of the following occurs:
\begin{enumerate}[(i)]
\item $H=O^{-\varepsilon}(2m,2)$;
\item $ab=m$, $b$ prime, and $H=Sp(2a,2^b)\rtimes C_b$;
\item $\varepsilon=-1$, and $H=P_m$ is the stabilizer of a maximal totally isotropic subspace;
\item $\varepsilon=-1$, $m$ is even, and $H=(Sp(m,2) \times Sp(m,2))\rtimes C_2$ is the stabilizer of a decomposition $\mathbb{F}_2^{2m} = U \oplus U^\perp$, where $U$ is a totally nonisotropic subspace of dimension $m$;
\item $m=3$ and $H=G_2(2)\cong PSU(3,3)\rtimes C_2$;
\item $\varepsilon=-1$, $m=4$, and $H=S_{10}$;
\item $\varepsilon=+1$, $m=4$, and $H=PSL(2,17)$.
\end{enumerate}
\end{lem}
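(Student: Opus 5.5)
Since $G$ acts primitively on $V^\varepsilon$ (it is $2$-transitive), a transitive maximal subgroup $H$ gives a factorization $G=G_vH$ with $G_v=O^{\varepsilon}(2m,2)$ maximal in $G$. I will therefore read off all maximal factorizations $G=AB$ of $Sp(2m,2)$ with $A=O^{\varepsilon}(2m,2)$ from Liebeck--Praeger--Saxl \cite{Liebeck1990}. Here $G=Sp(2m,2)$ is simple for $m\geq 3$, so $G=L$ and $H$ automatically does not contain $L$. The infinite families (Tables 1, 2 and 4) come first, then the exceptional ones (Table 3). In characteristic $2$ we have $Sp(2m,2)\cong \Omega(2m+1,2)$, and the relevant rows of Table 1 are the following.
\begin{itemize}
\item $A=O^{\pm}(2m,2)$ and $B=O^{\mp}(2m,2)$. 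This gives (i), with $H$ the stabilizer of the other type of quadric.
\item $A=O^{-}(2m,2)$ and $B=Sp(2a,2^b)\rtimes C_b$ with $b$ prime; I recall that field-extension subgroups are transitive only on the minus-type forms. This gives (ii) (and I need to double-check whether both $\varepsilon$ occur).
\item $A=O^{-}(2m,2)$ and $B=P_m$. This gives (iii).
\item $A=O^-(2m,2)$ and $B=Sp(m,2)\wr C_2$ with $m$ even. This gives (iv).
\end{itemize}

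For each candidate I will confirm transitivity directly by an order/orbit argument, since LPS list factorizations only up to which factor is which type. I need $|H:H\cap G_v|=|V^\varepsilon|$, and I compute $H\cap G_v$ geometrically. For $P_m$, a maximal totally isotropic subspace $W$ is totally singular for a quadratic form $\Theta$ polarizing to the symplectic form exactly when $\Theta$ vanishes on $W$. The forms with this property are all of plus type, so $P_m$ acts on $O^-$-forms. It acts transitively there because the forms polarizing to the given symplectic form correspond to the affine space of vectors, and the $O^-$ ones form a single orbit under the unipotent radical together with the Levi $GL(m,2)$. A similar decomposition handles $Sp(m,2)\wr C_2$: a form restricts to the pair $(\Theta|_U,\Theta|_{U^\perp})$, and type minus overall means opposite types on the two summands, so the stabilizer of the decomposition together with the swap acts transitively. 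For the field extension subgroups I will use the trace form $\Theta(x)=\mathrm{Tr}(Q(x))$ with $Q$ an $\mathbb{F}_{2^b}$-form, and count orbits.

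Next comes Table 3. The small cases there are $m=3$ with $B=G_2(2)$, which is transitive on both $28$ and $36$ points. The problem is that only the $36$-point action matters in Table~\ref{tab:main}, while the lemma allows $G_2(2)$ with either $\varepsilon$. That is consistent, since $G_2(2)$ is transitive on both. The other small cases are $m=4$ with $B=S_{10}$ acting on $120=|V^-|$ points, and $B=PSL(2,17)$ acting on $136$ points. These last cases can be checked by orders: $|S_{10}|/|S_{10}\cap O^-(8,2)|$ should be $120$ with intersection $S_7\times S_3$ or $S_3\times S_7$, and for $PSL(2,17)$ of order $2448=136\cdot 18$ the stabilizer is $D_{18}$. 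Table 2 (which concerns $Sp_4$ with graph automorphisms) is irrelevant because $m\geq 3$, and Table 4 does not apply since $q=2$.

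The main obstacle is bookkeeping. LPS list factorizations of $\Omega(2m+1,q)\cong Sp(2m,q)$ in orthogonal or symplectic language with types $\pm$, and I have to match correctly which $\varepsilon$ each factor corresponds to. The hardest such match is for $P_m$ and $Sp(m,2)\wr C_2$, which occur only for $\varepsilon=-1$. I will resolve this by the direct orbit computations above rather than trusting the table conventions.
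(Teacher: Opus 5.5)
Your proposal follows the paper's proof: you use the factorization $G=G_vH$ with $G_v=O^\varepsilon(2m,2)$ and read the possible $H$ off the Liebeck--Praeger--Saxl lists of maximal factorizations of $Sp(2m,2)$; your extra orbit computations only establish the converse (that the listed groups really are transitive), which the lemma does not require. One correction to your side remark: the field-extension subgroups $Sp(2a,2^b)\rtimes C_b$ are transitive on both $V^+$ and $V^-$, not only on the minus-type forms, which is why (ii) carries no condition on $\varepsilon$ — already $Sp(2,2^m)\cong PGL(2,2^m)\le Sp(2a,2^b)$ is transitive on each, with stabilizers $D_{2(2^m\mp 1)}$ of index $2^{m-1}(2^m\pm 1)$.
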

\begin{proof}
(i)-(iv) are infinite classes given in \cite[Table 1]{Liebeck1990}. \cite[Table 2]{Liebeck1990} tells us that for $q=2^e$, $A=G_2(q)$, $B=O^\pm(6,q)$ is an infinite class of maximal factorizations $G=AB$ of $G=PSp(6,q)$. This gives (v) with $q=2$ for both $\varepsilon=\pm 1$. (For $\varepsilon=-1$, $|V^-|=28$, and $H=G_2(2)\cong PSU(3,3)\rtimes C_2$ is doubly transitive.) (vi) and (vii) are exceptional factorizations given in \cite[Table 4]{Liebeck1990}. 
\end{proof}

\subsection{Parameters of graphs and two-graphs of symplectic type}

The rest of this section is devoted to showing that certain classes of transitive subgroups of $PSp(2m,2)$ cannot act on strongly regular graphs admitting two-graphs of symplectic type. 

Our notation is as usual: $m\geq 3$ is an integer, $\langle \cdot,\cdot \rangle$ is a nondegenerate symplectic bilinear form on $\mathbb{F}_2^{2m}$, and $G=Sp(2m,2)$ is the symplectic group preserving $\langle \cdot,\cdot \rangle$. We denote by $V^\pm$ the set of quadratic forms of type $\pm 1$, linearizing to $\langle \cdot,\cdot \rangle$; $G$ acts doubly transitively on both $V^+$ and $V^-$. Let $\mathcal{T}^\pm=(V^\pm,T^\pm)$ denote the two-graphs of symplectic type with $G=\Aut(\mathcal{T}^\pm)$; their parameters are $v^\pm=|V^\pm|=2^{2m-1}\pm 2^{m-1}$ and $a^\pm=2^{2m-2}\pm 2^{m-1}-2$; see Table \ref{tab:regularity-properties}. By Result \ref{res:rels-twograph}, the parameters of $\mathcal{T}^\pm$ allow for two sets of parameters of strongly regular graphs in the switching class of $\mathcal{T}^\pm$:
\begin{align*}
k_1^\pm&=2^{2m-2}-1, & \lambda_1^\pm &=2^{2m-3}-2, & \mu_1^\pm&=2^{2m-3} \mp 2^{m-2}; \\
k_2^\pm&=2^{2m-2}\pm 3\cdot 2^{m-2} -1, & \lambda_2^\pm &=2^{2m-3}\pm 3\cdot 2^{m-2} -2, & \mu_2^\pm&=2^{2m-3} \pm 2^{m-1} . 
\end{align*}
The first quadruple $(v^\pm,k_1^\pm,\lambda_1^\pm,\mu_1^\pm)$ gives the parameters of $NO^\mp_{2m}(2)$. If $m$ is even, then the second quadruple $(v^\pm,k_2^\pm,\lambda_2^\pm,\mu_2^\pm)$ gives the parameters of $NO^{\pm}_{m+1}(4)$. If $m$ is odd, then, in general, it is not known if an $srg(v^\pm,k_2^\pm,\lambda_2^\pm,\mu_2^\pm)$ exists. 

In both cases, the eigenvalues are $\mp 2^{m-2}-1$ and $\pm 2^{m-1}-1$. The corresponding multiplicities are
\begin{align}
f_1^\pm &= \frac{4}{3}(2^{2m-2}-1), & g_1^\pm &= \frac{1}{3}(2^{m-1}\pm 1)(2^m \pm 1), \label{eq:multilicities-1}\\
f_2^\pm = f_1^\pm +1 &= \frac{1}{3}(2^{2m}-1), & g_2^\pm = g_1^\pm -1 &= \frac{2}{3}(2^{m-2}\pm 1)(2^m \mp 1). \label{eq:multilicities-2}
\end{align}

\subsection{Field extension subgroup \texorpdfstring{$H=Sp(2a,2^b)\rtimes C_b$}{H=Sp(2a,2b).b}}
If $b$ is a positive divisor of $m$, then $H(b)=Sp(\frac{2m}{b},2^b)\rtimes C_b$ is a subgroup of $G=Sp(2m,2)$ with the following properties:
\begin{enumerate}
\item $H(b)$ is transitive in both actions of $G$ on $2^{2m-1}\pm 2^{m-1}$ points. 
\item $H(b_1)\leq H(b_2)$ if and only if $b_2\mid b_1$.
\item $H(b)$ is maximal in $G$ if and only if $b$ is prime.
\item $H(m)=Sp(2,2^m)\cong PGL(2,2^m)$. 
\end{enumerate}
$H(b)$ is also called a field extension subgroup of $PSp(2m,2)$. In this section, we prove the following nonexistence result:
\begin{prop} \label{pr:Sp-2a-2b}
Assume that $m=ab$, $b>1$ is odd, and $H=Sp(2a,2^b)\rtimes C_b$. There is no $H$-invariant strongly regular graph in the switching class of $\mathcal{T}^\pm$. 
\end{prop}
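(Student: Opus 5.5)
Our plan is to reduce to a minimal subgroup and use a fixed-point/eigenspace argument of the kind used in Theorem~\ref{thm:linear-type-srg}. Since $H(b)\geq H(m)$ is false in general but $H(b)\geq H(b')$ whenever $b\mid b'$, it suffices to find a subgroup $K\leq H$ that still acts transitively on $V^\pm$ and for which we can compute all cocycles in $\mathcal{W}(\tau^\pm)$ fixed by $K$. Natural choices are a Singer-type cyclic subgroup of $Sp(2a,2^b)$, of order $2^{m}+1$ or $2^m-1$ together with field automorphisms, or the subgroup $Sp(2a,2^b)$ itself. By Proposition~\ref{prop:bracket-actions}(iii), $H$-invariant graphs in the switching class correspond to fixed points of the curly bracket action of $H$ on $\mathcal{V}(v)$, so the problem becomes a statement about fixed vectors of an affine action.

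The first step is to describe $H$-invariant graphs in the class concretely. A graph $\Gamma$ in the class of $\mathcal{T}^\pm$ is $\Gamma=\partial f+\tau_v$ with $f\in\mathcal{V}(v)$. If $\Gamma$ is $H$-invariant and $H$ is transitive, then $\Gamma$ is regular, and it is a union of orbitals of $H$ on $V^\pm$. We therefore determine the orbitals of $H=Sp(2a,2^b)\rtimes C_b$ on quadratic forms of type $\pm$. A quadratic form $Q$ on $\mathbb{F}_2^{2m}$ linearizing to the trace form $\mathrm{Tr}\,B(x,y)$ corresponds to a form $x\mapsto \mathrm{Tr}(Q_0(x))+\ldots$, and the difference of two forms $Q_1+Q_2=\langle c,\cdot\rangle^2$ is determined by a vector $c$. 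Thus the stabilizer of $Q_1$ in $H$ is $O^{\pm}(2a,2^b)\rtimes C_b$, and the $H$-orbitals are parametrized by the orbits of this group on vectors $c$. The invariants are $Q_1(c)\in\mathbb{F}_{2^b}$, taken up to Frobenius, together with whether $c$ is singular. The two-graph triple condition $\langle a,b\rangle+\langle a,c\rangle+\langle b,c\rangle$ translates into $\mathrm{Tr}(Q_1(c))$, which fixes which unions of orbitals are compatible with $\tau$.

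The second step is to check strong regularity against the two admissible parameter sets $(v^\pm,k_i^\pm,\lambda_i^\pm,\mu_i^\pm)$. The valency of an orbit union is $\sum_{\xi}N(\xi)$, where $N(\xi)$ counts vectors $c$ with $Q_1(c)=\xi$ in an orthogonal space of dimension $2a$ over $\mathbb{F}_{2^b}$, namely $2^{b(2a-1)}\mp2^{b(a-1)}$ for $\xi\ne0$. Up to Frobenius orbits, which have odd length dividing $b$, we must choose a set $\Xi\subseteq\mathbb{F}_{2^b}$ that is consistent with the trace condition. It is cleaner to test the eigenvalues instead. The graph's Seidel matrix restricted to orbit sums gives eigenvalues expressed through Kloosterman/Gauss-type character sums $\sum_{\xi\in\Xi}\psi(\xi)$, and these must equal $\mp2^{m-2}-1$ or $\pm2^{m-1}-1$ with the multiplicities in \eqref{eq:multilicities-1}--\eqref{eq:multilicities-2}. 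Since $b$ is odd, no $\Xi$ satisfies this: the character sums over $\mathbb{F}_{2^b}$ would have to take a value like $\pm 2^{b/2}$ or its relatives, which is impossible for odd $b$, since $\sqrt{2^b}$ is irrational. This parity/irrationality argument is where oddness of $b$ enters, and it is also why $b=2$ survives via $NO_{m+1}(4)$.

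The main obstacle is this second step: setting up the association scheme of $O^\pm(2a,2^b)\rtimes C_b$ acting on $V^\pm$ and computing its eigenvalues well enough to rule out every union. If the full eigenvalue computation is too heavy, the fallback is a counting obstruction. We would first compute $k$ for every admissible $\Xi$ and show that it never equals $k_1^\pm$ or $k_2^\pm$. Failing that, we would compute $\lambda$ on a single edge orbit and compare it with Result~\ref{res:rels-twograph}(ii), using that $\lambda=k+(a^\pm-v^\pm)/2$ is forced.
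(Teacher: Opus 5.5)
The proposal stops exactly where the proof has to happen. Your decisive claim is that no admissible union of $H$-orbitals has the required spectrum. It rests on the assertion that some character sum ``would have to take a value like $\pm 2^{b/2}$'', and this is not tied to any computation. The target eigenvalues $\mp 2^{m-2}-1$ and $\pm 2^{m-1}-1$ are integers for every $b$. Moreover, the spectrum of an $H$-invariant graph is not given by additive character sums over $\Xi\subseteq\mathbb{F}_{2^b}$. Already for $a=1$, where $H=Sp(2,2^m)\rtimes C_m$, the permutation character restricted to $Sp(2,2^m)\cong PGL(2,2^m)$ is the multiplicity-free character $\pi^\pm$ of Lemma~\ref{lm:pgl2q-char-decomp}. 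The eigenvalues are therefore governed by $\psi_1$ and the $\nu_\gamma$, i.e.\ by multiplicative characters of $\mathbb{F}_{2^m}^*$. They lie in $\mathbb{Q}(\zeta)$ with $\zeta$ a primitive $(2^m-1)$-th root of unity, a field not containing $\sqrt{2}$.

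Your fallbacks do not close the gap. Valency does not separate the candidates. For $a=1$, $b=m=9$ on $V^+$, every $P_\xi$ with $\xi\neq 0$ has length $2^9-1$. The $255$ nonzero trace-zero $\xi$ form one Frobenius orbit of size $3$ and $28$ of size $9$. Taking the size-$3$ orbit together with $14$ of the others gives valency $129\,(2^9-1)=k_2^+$. Computing $\lambda$ needs the intersection numbers of the scheme, which you never determine.

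Finally, the condition $\mathrm{Tr}(Q_1(c))=0$ only says which $c$ index vertices of $V^\pm$. It does not decide which orbital unions lie in the switching class. That requires the $H$-orbital graph with neighbourhood $N$ at $v$ to coincide with $\partial f_N+\Delta_v\tau$, where $f_N$ is the indicator of $N$. So you have not even pinned down the list of graphs to be excluded.

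What is missing is a transitive subgroup you can actually compute with. Contrary to your first sentence, $b\mid m$ gives $H(m)\le H(b)$, and $H(m)=Sp(2,2^m)\cong PGL(2,2^m)$ is transitive on $V^\pm$. The Singer-type subgroups you mention do not help: even their normalizers in $H$ have order not divisible by $2^{m-1}$, so they are not transitive.

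For $m$ odd, the paper uses Lemma~\ref{lm:pgl2q-char-decomp} to constrain the two restricted eigenspaces of an $H(m)$-invariant srg. On $V^-$ both have dimension divisible by $2^m+1$; on $V^+$ exactly one does. The multiplicities in \eqref{eq:multilicities-1}--\eqref{eq:multilicities-2} violate this when $m$ is odd.

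For $m$ even, $H(m)$ also lies in $H(2)=\Aut(NO^\pm_{m+1}(4))$, a graph in the same switching class. The switching set between $\Gamma$ and $NO^\pm_{m+1}(4)$ would then give a nontrivial $H(m)$-invariant partition of $V^\pm$ into two parts. This contradicts the primitivity of $H(m)$.

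Neither idea appears in your plan. As written it is a programme whose key step is unproved.
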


The cases when $m$ is even or odd require different approaches; the odd case is harder. Our method relies on the irreducible characters of $PGL(2,q)$, $q$ even. Jordan and Schur computed the character table of $PGL(2, q)$ in 1907, and it appears in many textbooks (see \cite{James2009}). We use the notation and terminology of \cite{Meagher2011}. As above, we denote by $\tmat{a}{b}{c}{d}$ the elements of $PGL(2,q)$. Table \ref{tab:cclasses} shows the structure of the conjugacy classes; Table \ref{tab:chartable} is the character table of $PGL(2,q)$, $q$ even. The maps $\beta:\mathbb{F}_{q^2}^*/\mathbb{F}_q^* \to \mathbb{C}$ and $\gamma:\mathbb{F}_q^*\to \mathbb{C}$ are nontrivial homomorphisms from the cyclic groups of order $q\pm 1$. Moreover, $\eta_\beta=\eta_{\beta'}$ and $\nu_\gamma=\nu_{\gamma'}$ if and only if $\beta'(x)=\beta(x^{-1})$ and $\gamma'(x)=\gamma(x^{-1})$. Therefore, the number of these characters is $\frac{q}{2}$ and $\frac{q}{2}-1$, respectively. 

\begin{table}[ht]
\caption{Conjugacy classes in $PGL(2,q)$, $q$ even}
\label{tab:cclasses}
\small
\begin{tabular}{l|l|l|l|l}
Representative of the & $\tmat{1}{0}{0}{0}$ & $u=\tmat{1}{1}{0}{1}$ & $d_x=\tmat{1}{0}{0}{x}$ & $h_r=\tmat{0}{1}{r^{q+1}}{r+r^q}$ \\ 
conjugacy class &&& $x\in \mathbb{F}_q\setminus\{0,1\}$ & $r\in \mathbb{F}_{q^2} \setminus \mathbb{F}_q$ \\ \hline\hline
Condition of conjugacy & & & $x=y$ or $x=y^{-1}$ & $r\mathbb{F}_{q}^*=s\mathbb{F}_{q}^*$ or \\
& & & & $r\mathbb{F}_{q}^*=s^{-1}\mathbb{F}_{q}^*$ \\
Number of such classes & 1 & 1 & $\frac{1}{2}q-1$ & $\frac{1}{2}q$ \\
Size of the conjugacy class & 1 & $q^2-1$ & $q(q+1)$ & $q(q-1)$ 
\\ \hline\hline
\end{tabular}
\end{table}

\begin{table}[ht]
\caption{Character table of $PGL(2,q)$, $q$ even}
\label{tab:chartable}
\small
\begin{tabular}{l|c|c|c|c}
& 1 & $u$ & $d_x$ & $h_r$ \\ \hline\hline
$\lambda_1$ & 1 & 1 & 1 & 1 \\
$\psi_1$ & $q$ & 0 & 1 & $-1$ \\
$\eta_\beta$ ($\beta:\mathbb{F}_{q^2}^*/\mathbb{F}_q^*\to \mathbb{C}$) & $q-1$ & -1 & 0 & $-\beta(r)-\beta(r^{-1})$ \\
$\nu_\gamma$ ($\gamma:\mathbb{F}_q^*\to \mathbb{C}$) & $q+1$ & 1 & $\gamma(x)+\gamma(x^{-1})$ & 0 \\ \hline\hline
\end{tabular}
\end{table}

\begin{table}[ht]
\caption{Values of the permutation characters}
\label{tab:permchar-values}
\begin{tabular}{l|p{18mm}p{18mm}p{18mm}p{18mm}}
 & 1 & $u$ & $d_x$ & $h_r$ \\ \hline\hline
$\pi^-$ & $\frac{1}{2}q(q-1)$ & $\frac{1}{2}q$ & 0 & 1 \\
$\pi^+$ & $\frac{1}{2}q(q+1)$ & $\frac{1}{2}q$ & 1 & 0 \\ \hline\hline
\end{tabular}
\end{table}

\begin{lem} \label{lm:pgl2q-char-decomp}
Let $U^\pm$ be the maximal subgroup $D_{2(q\mp 1)}$ of $L=PGL(2,q)$, $q$ even. Let $\pi^\pm=1^L_{U^\pm}$ be the permutation characters of the action of $L$ on the right cosets of $U^\pm$. Then
\begin{align*}
\pi^- &= \lambda_1 + \sum_\gamma \nu_\gamma, & \pi^+ &= \lambda_1 + \psi_1 + \sum_\gamma \nu_\gamma,
\end{align*}
where the summation is over the nontrivial homomorphisms $\gamma:\mathbb{F}_q^*\to \mathbb{C}$ up to inversion. 
\end{lem}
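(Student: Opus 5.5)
The plan is to compute the permutation characters $\pi^\pm$ explicitly and then decompose them against the character table by inner products. Here $U^-=D_{2(q+1)}$ and $U^+=D_{2(q-1)}$, so $\pi^-$ has degree $|L:U^-|=q(q-1)/2$ and $\pi^+$ has degree $q(q+1)/2$, as in Table \ref{tab:permchar-values}.

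\textbf{Step 1: the values of $\pi^\pm$.} We use the induced-character formula
\[\pi(g)=\frac{|C_L(g)|\,|g^L\cap U|}{|U|},\]
together with Table \ref{tab:cclasses}, which gives $|C_L(u)|=q$, $|C_L(d_x)|=q-1$ and $|C_L(h_r)|=q+1$.
\begin{itemize}
\item In $D_{2(q\mp1)}$ the $q\mp1$ reflections are involutions. Since $q$ is even, the only class of involutions in $L$ is the unipotent class of $u$. Hence $\pi^\pm(u)=q(q\mp1)/(2(q\mp1))=q/2$.
\item The cyclic part of $U^-=D_{2(q+1)}$ is a nonsplit torus $C_{q+1}$. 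It contains no conjugate of $d_x$, so $\pi^-(d_x)=0$.
\item For $h_r$, the element $h_r$ and its inverse both lie in the torus, and the torus meets the class of $h_r$ in exactly $2$ elements. This gives $\pi^-(h_r)=(q+1)\cdot 2/(2(q+1))=1$.
\item Symmetrically, $\pi^+(d_x)=1$ and $\pi^+(h_r)=0$.
\end{itemize}
This reproduces Table \ref{tab:permchar-values}.

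\textbf{Step 2: the multiplicities.} We compute $\langle \pi^\pm,\chi\rangle$ for each irreducible $\chi$ from Table \ref{tab:chartable}.
\begin{itemize}
\item Against $\lambda_1$ the inner product is $1$ by transitivity.
\item For $\psi_1$ and $\pi^-$, we get
\[\frac{1}{|L|}\Bigl(\tfrac{q^2(q-1)}{2} + 0 + 0 - \tfrac{q}{2}\cdot q(q-1)\Bigr)=0.\]
\item For $\psi_1$ and $\pi^+$, we get
\[\frac{1}{|L|}\Bigl(\tfrac{q^2(q+1)}{2}+(\tfrac q2-1)q(q+1)\Bigr)=\frac{q(q+1)(q-1)}{|L|}=1.\]
\item For $\eta_\beta$, the $u$-column contributes $-\tfrac q2(q^2-1)$, which cancels the identity term $\tfrac12 q(q\mp1)(q-1)$ together with the torus term, leaving $0$. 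For $\pi^+$ the $h_r$-column vanishes and the arithmetic is immediate. For $\pi^-$ we use $\sum_r(\beta(r)+\beta(r^{-1}))=-2$ over the nontrivial classes of the nonsplit torus, since $\beta$ is nontrivial.
\item For $\nu_\gamma$, the $d_x$-sum for $\pi^+$ uses $\sum_{x\neq 0,1}\gamma(x)=-1$. The result is that each $\nu_\gamma$ occurs with multiplicity $1$ in both $\pi^\pm$.
\end{itemize}

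\textbf{Step 3: a degree check.} For $\pi^-$:
\[1+(\tfrac q2-1)(q+1)=\tfrac{q(q-1)}{2}.\]
For $\pi^+$:
\[1+q+(\tfrac q2-1)(q+1)=\tfrac{q(q+1)}{2}.\]
Both agree with the degrees of $\pi^\pm$. Since the multiplicities computed in Step 2 already account for the full degree, no other constituents can occur. This confirms the decomposition claimed in Lemma \ref{lm:pgl2q-char-decomp}.

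\textbf{Main obstacle.} The delicate points are the class-intersection counts for $h_r$ and $d_x$ in the dihedral groups, including correctly identifying which torus lies in which $U^\pm$, and the sums of $\beta$ and $\gamma$ over the class representatives. We handle the latter by summing over all nonidentity torus elements and halving, since each class is counted twice via $r$ and $r^{-1}$.
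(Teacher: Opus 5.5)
Your proposal is correct and follows essentially the paper's route: take the permutation character values of Table~\ref{tab:permchar-values}, compute inner products against the character table, and close with a degree count. The paper shortens this by noting $\pi^+-\pi^-=\psi_1$ and computing only $\langle\pi^-,\lambda_1\rangle$ and $\langle\pi^-,\nu_\gamma\rangle$. One wording point: summed over the $q/2$ conjugacy classes of type $h_r$, $\sum(\beta(r)+\beta(r^{-1}))=-1$, whereas your $-2$ is the sum over all $q$ nonidentity torus elements and must be halved, as your closing remark indicates.
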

\begin{proof}
Table \ref{tab:permchar-values} displays the values of the permutation characters $\pi^\pm$. We immediately have $\pi^+-\pi^-=\psi_1$. Straightforward computation shows 
\[\langle \pi^-, \lambda_1 \rangle = \langle \pi^-,\nu_\gamma \rangle = 1\]
for all nontrivial homomorphisms $\gamma:\mathbb{F}_q^*\to \mathbb{C}$. Since the number of characters $\nu_\gamma$ is $\frac{q}{2}-1$, and their degree is $q+1$, we have $\pi^- = \lambda_1 + \sum_\gamma \nu_\gamma$.
\end{proof}

\begin{prop} \label{pr:multiplicity-divisibility}
Let $q$ be a power of $2$, and $\Gamma$ a strongly regular graph on $v=\frac{1}{2}q(q\pm 1)$ vertices. Assume that $\Gamma$ admits $PGL(2,q)$ as a transitive automorphism group. Let $f,g$ be the multiplicities of the restricted eigenvalues of $\Gamma$.
\begin{enumerate}
\item If $v=\frac{1}{2}q(q-1)$, then both $f,g$ are divisible by $q+1$. 
\item If $v=\frac{1}{2}q(q+1)$, then exactly one of $f,g$ is divisible by $q+1$. 
\end{enumerate}
\end{prop}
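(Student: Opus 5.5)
Since $\Gamma$ admits $PGL(2,q)$ as a transitive automorphism group on $v=\frac12 q(q\pm1)$ points, the plan is to pass from eigenspaces to characters. First identify the action: by the classification of subgroups of $PGL(2,q)$ with $q$ even, the only subgroups of index $\frac12 q(q\pm1)$ are the dihedral groups $U^\pm=D_{2(q\pm1)}$. So the vertex set is $L$-isomorphic to the coset space of $U^-$ when $v=\frac12q(q-1)$, and of $U^+$ when $v=\frac12q(q+1)$. (For $v=\frac12 q(q-1)$ the point stabilizer has order $2(q+1)$; for $v=\frac12q(q+1)$ it has order $2(q-1)$. The signs match the notation of Lemma \ref{lm:pgl2q-char-decomp}: $U^\pm=D_{2(q\mp1)}$, giving $\pi^\mp$ of degree $\frac12q(q\mp1)$.) The permutation module $\mathbb{C}^V$ then affords $\pi^-$ or $\pi^+$ respectively.

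Next use the commuting of the adjacency matrix $A$ with the permutation matrices of $L\le \Aut(\Gamma)$. Each eigenspace of $A$ is an $L$-submodule of $\mathbb{C}^V$. The eigenvalue $k$ has eigenspace spanned by the all-ones vector, which carries $\lambda_1$. Hence the two restricted eigenspaces, of dimensions $f$ and $g$, decompose $\pi-\lambda_1$ into two complementary $L$-submodules. By Lemma \ref{lm:pgl2q-char-decomp}, $\pi^- = \lambda_1+\sum_\gamma\nu_\gamma$ is multiplicity free, with every nontrivial constituent of degree $q+1$. Each restricted eigenspace is therefore a sum of some of the $\nu_\gamma$, so $q+1$ divides both $f$ and $g$, which proves (1). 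In the second case $\pi^+-\lambda_1=\psi_1+\sum_\gamma\nu_\gamma$, again multiplicity free since $\psi_1$ differs from every $\nu_\gamma$. The constituent $\psi_1$ of degree $q$ lies in exactly one eigenspace. That eigenspace has dimension $\equiv q\equiv -1 \pmod{q+1}$, so it is not divisible by $q+1$, while the other eigenspace has dimension divisible by $q+1$. This proves (2).

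The main obstacle is the first step, namely showing that the transitive action must be the coset action on $D_{2(q\pm1)}$. Multiplicity-freeness is then immediate from the lemma, since the listed characters are distinct irreducibles. For $q\ge 4$ I would use Dickson's list of subgroups of $PGL(2,q)=PSL(2,q)$ and compare orders. The possible point stabilizers of order $2(q\pm1)$ are only the dihedral normalizers; other subgroups, such as subfield groups $PGL(2,q_0)$ or Borel-type subgroups $E_q\rtimes C_{q-1}$, would need order $2(q\pm1)$ and fail for size reasons. The tiny case $q=2$ is trivial or vacuous. Within a fixed order, all such dihedral subgroups are conjugate, so the permutation character is determined.
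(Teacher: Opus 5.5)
Your proposal is correct and follows the paper's proof. The two restricted eigenspaces are complementary $PGL(2,q)$-submodules of the permutation module with the trivial constituent removed. Lemma~\ref{lm:pgl2q-char-decomp} then gives only degree-$(q+1)$ constituents in case (1). In case (2) it places $\psi_1$ in exactly one eigenspace, whose dimension is then $\equiv -1 \pmod{q+1}$.

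Your one addition is the explicit check, via Dickson's list, that the action must be the coset action on $U^\pm$; the paper uses this tacitly. One small correction there: for $q=4$ the subfield group $PGL(2,2)$ has order $2(q-1)=6$, so size alone does not exclude it. This is harmless, because $PGL(2,2)\cong S_3$ is itself the dihedral group $D_{2(q-1)}$.
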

\begin{proof}
Let $r,s$ be restricted eigenvalues of $\Gamma$, with eigenspaces $R, S$. Then $R,S$ are $PGL(2,q)$-modules, and we have the decomposition $\mathbb{C}^v=\langle \mathbf{j} \rangle \oplus R \oplus S$, where $\mathbf{j}$ is the all-one vector of dimension $v$. In particular,  $R$ and $S$ do not have the trivial module as an irreducible summand. If $v=\frac{1}{2}q(q-1)$, then both characters $\chi_R, \chi_S$ are the sum of irreducible characters of degree $q+1$ by Lemma \ref{lm:pgl2q-char-decomp}. Therefore, $f=\dim(S)$ and $g=\dim(R)$ are divisible by $q+1$. If $v=\frac{1}{2}q(q+1)$, then exactly one of $\chi_R, \chi_S$ has a $q$-dimensional irreducible summand, while the other has only irreducible summands of degree $q+1$. This proves the proposition. 
\end{proof} 

We are now able to prove the nonexistence result for $H(b)$ when $b$ is an odd integer.

\begin{proof}[Proof of Proposition \ref{pr:Sp-2a-2b}]
Let $b>1$ be an odd divisor of the integer $m$, and $\Gamma$ be a putative $H(b)$-invariant strongly regular graph in the switching class of $\mathcal{T}^\pm$. 

\textbf{Case 1:} $m$ is odd. As we have seen at the beginning of the section, $H(m)=Sp(2,2^m)\cong PGL(2,2^m)$ is a transitive subgroup of $H(b)$. Hence, Proposition \ref{pr:multiplicity-divisibility} applies for $\Gamma$. The possible multiplicities of the eigenvalues of $\Gamma$ are given in \eqref{eq:multilicities-1} and \eqref{eq:multilicities-2}. Clearly, $f_1^\pm=\frac{1}{3}(2^m-2)(2^m+2)$ is not divisible by $2^m+1$, and 
\begin{align*}
f_2^\pm = \frac{1}{3}(2^m-1)(2^m+1) \;\text{ is divisible by }\; 2^m+1 \Longleftrightarrow 3\mid 2^m-1 \Longleftrightarrow \text{$m$ is even.}
\end{align*}
This shows that $\Gamma$ cannot be in the switching class of $\mathcal{T}^-$. Assume that $\Gamma$ is in the switching class of $\mathcal{T}^+$. Then $g_2^+=\frac{2}{3}(2^{m-2}+1)(2^m-1)$ is not divisible by $2^m+1$, and
\begin{align*}
g_1^+ = \frac{1}{3}(2^{m-1}+1)(2^m+1) \;\text{ is divisible by }\; 2^m+1 \Longleftrightarrow 3\mid 2^{m-1}+1 \Longleftrightarrow \text{$m$ is even.}
\end{align*}
Hence, we have a contradiction to Proposition \ref{pr:multiplicity-divisibility}.

\textbf{Case 2:} $m$ is even. In this case, the switching class of $\mathcal{T}^\pm$ contains the strongly regular graph $\Delta=NO^{\pm}_{m+1}(4)$, see Section \ref{subsec:no-2m+1-4}. The automorphism group of $\Delta$ is the maximal subgroup $H(2)=Sp(m,4)\rtimes C_2$ of $G$. By definition, $2b$ divides $m$, and $H(m)$ is a subgroup of both $\Aut(\Gamma)$ and $\Aut(\Delta)$. If $S$ denotes the switching set from $\Gamma$ to $\Delta$, then the partition $S\dot\cup \overline{S}$ is preserved by $H(m)$. This is not possible, since $H(m)=Sp(2,2^m)\rtimes C_m \cong PGL(2,2^m) \rtimes C_m$ acts primitively on $V$. 
\end{proof}

\subsection{Stabilizer \texorpdfstring{$H=P_m$}{H=Pm} of a maximal totally isotropic subspace}
We say that a subspace $U\leq \mathbb{F}_2^{2m}$ is totally isotropic if, for any two vectors $x,y\in U$, the bilinear form $\langle x,y \rangle=0$ holds. Then $\dim(U)\leq m$, and if $\dim(U)=m$, then $U$ is a maximal totally isotropic subspace. Maximal totally isotropic subspaces are in the same $G$-orbit. The stabilizer of a maximal totally isotropic subspace in $G$ is the $m$th parabolic subgroup $P_m$ of $G$. By Lemma \ref{lm:symplectic-maxgr}(iii), $P_m$ is a maximal subgroup acting transitively on $V^-$. 

\begin{prop} \label{pr:symplectic-maxgr}
There is no $P_m$-invariant strongly regular graph in the switching class of $\mathcal{T}^-$.
\end{prop}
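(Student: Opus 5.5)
The plan is to exploit a $P_m$-invariant block system on $V^-$ and turn it into an equitable partition of any putative $P_m$-invariant strongly regular graph $\Gamma$. Let $U$ be the maximal totally isotropic subspace stabilized by $P_m$. For $Q\in V^-$ the bilinear form vanishes on $U$, so $Q|_U$ is additive, i.e.\ a linear functional on $U$. It is nonzero, because a quadratic form of minus type has no totally singular subspace of dimension $m$. Hence $Q\mapsto Q|_U$ is a $P_m$-equivariant map $V^-\to U^*\setminus\{0\}$. I will check that its fibres all have size $|V^-|/(2^m-1)=2^{m-1}(2^m-1)/(2^m-1)=2^{m-1}$; this should follow since $P_m$ is transitive on $V^-$. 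The group $P_m$ induces $GL(m,2)$ on $U^*$, which is $2$-transitive on nonzero vectors. So we obtain $2^m-1$ blocks $B_\xi$ of size $2^{m-1}$, and $P_m$ is $2$-transitive on the set of blocks.

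For an $H$-invariant graph $\Gamma$ with $H\ge P_m$, transitivity on blocks and on ordered pairs of blocks makes the block partition equitable. Each block induces a $d$-regular subgraph, and every vertex has exactly $e$ neighbours in each other block. The quotient matrix is $(d-e)I+eJ$, with eigenvalues $k=d+(2^m-2)e$ and $\theta=d-e$. Since $2^m-2>0$, $\theta$ is an eigenvalue of $\Gamma$ on a vector orthogonal to $\mathbf{j}$. Therefore $\theta\in\{r,s\}=\{2^{m-2}-1,\,-2^{m-1}-1\}$, and $k-\theta=(2^m-1)e$ with $0\le d\le 2^{m-1}-1$. I will run through the two admissible parameter sets $k_1^-=2^{2m-2}-1$ and $k_2^-=2^{2m-2}-3\cdot 2^{m-2}-1$ for $\mathcal{T}^-$.

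\begin{itemize}
\item For $k_1^-$ with $\theta=r$, we get $k-r=2^{m-2}(2^m-1)$. This gives $e=2^{m-2}$ and $d=2^{m-1}-1$, so every block is a clique. This clique meets the Delsarte bound $1-k/s=2^{m-1}$.
\item For $k_1^-$ with $\theta=s$, we need $(2^m-1)\mid 2^{m-1}(2^{m-1}+1)$, i.e.\ $(2^m-1)\mid(2^{m-1}+1)$. This is impossible for $m\ge3$.
\item For $k_2^-$, the conditions become $(2^m-1)\mid 2^m(2^{m-2}-1)$ or $(2^m-1)\mid 2^{2m-2}-2^{m-2}$. These reduce to $(2^m-1)$ dividing $2^{m-2}-1$ or $2^{m-2}(2^m-1)$. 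The first is impossible. The second forces $e=2^{m-2}$ and $d=2^{m-1}$, which exceeds $2^{m-1}-1$ and so is impossible.
\end{itemize}

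I will double-check these small divisibility computations carefully, since sign slips are easy here. The result should be that the only surviving case is $k=k_1^-$ with every block a clique of size $2^{m-1}$.

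To kill this last case I will use the two-graph. If a block $B_\xi$ is a clique in $\Gamma$, every triple inside $B_\xi$ has three edges. So every triple inside $B_\xi$ lies in $T^-$, i.e.\ the two-graph $\mathcal{T}^-$ restricted to $B_\xi$ is complete. I will therefore compute the restriction of $\mathcal{T}^-$ to a block directly. Two forms in $B_\xi$ agree on $U$, so their difference is $x\mapsto\langle x,w\rangle^2=\langle x,w\rangle$ for some $w\in U^\perp=U$. Consequently a block is an orbit of a translation group $\{Q\mapsto Q+\langle\cdot,w\rangle : w\in U\}$ restricted to minus-type forms. I will then express the triple condition of $\mathcal{T}^-$ in the $\mathcal{X}^-_{2m}$-model of Section~\ref{subsec:no-2m+1-4}: identify forms with vectors $a$ with $\Theta(a)=1$, so that membership is $\langle a,b\rangle+\langle a,c\rangle+\langle b,c\rangle=0$. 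A block then corresponds to an affine piece $a_0+U'$ for a totally isotropic $U'$. The pairwise inner products there are $\langle a_0,u\rangle$-type terms, which are not all constant. I expect to exhibit a triple in the block outside $T^-$ (or use the regularity count $a^-$ restricted to a block) and derive a contradiction.

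This identification of the block inside the $\mathcal{X}^-_{2m}$ model, and the verification that the induced two-graph on a block is not complete, is the main obstacle. If it proves awkward, my fallback is a counting argument. Result~\ref{res:rels-switching2} and the regularity $a^-=2^{2m-2}-2^{m-1}-2$ bound how many triples through a fixed pair can lie inside a block of size $2^{m-1}$. I would then compare this with the $2^{m-1}-2$ triples that completeness would force.
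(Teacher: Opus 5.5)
Your reduction holds up to its last case: the eigenvalue analysis on the blocks of size $2^{m-1}$ leaves only $k=k_1^-$, with every block a clique and $e=2^{m-2}$. The step you flag as the main obstacle cannot be carried out, because the restriction of $\mathcal{T}^-$ to a block is \emph{complete}.

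To see this, write $Q=Q_0+\langle\cdot,a\rangle$ for a fixed hyperbolic form $Q_0$. Then $T^-$ consists of the triples with $\langle a,b\rangle+\langle a,c\rangle+\langle b,c\rangle=0$. Two forms lie in the same block iff $a+b\in U^\perp=U$, so a block is $\{a_0+u : u\in U,\ Q_0(a_0+u)=1\}$. Since $U$ is totally isotropic, $\langle a_0+u,a_0+u'\rangle=\langle a_0,u\rangle+\langle a_0,u'\rangle$, and summing over the three pairs of any triple in the block gives $0$. Hence no triple of a block lies outside $T^-$. The counting fallback fails for the same reason: completeness only requires $2^{m-1}-2\le a^-$, which holds.

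Worse, your surviving configuration actually occurs. Choose $Q_0$ so that $U$ is totally singular for it, and let $\Delta$ be the graph on $V^-$ with $a\sim b$ iff $\langle a,b\rangle=0$; this is the graph with $\Aut(\Delta)=O^+(2m,2)$ used in the paper. In $\Delta$ every block is a clique, and each vertex has exactly $2^{m-2}$ neighbours in every other block. Since $\Delta$ is not $P_m$-invariant, no argument built only from the block system, $2$-transitivity on blocks and the two-graph can finish the proof.

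What is missing is a stronger use of $P_m$. The paper does this without any block analysis. The group $L=\{\mathrm{diag}(A,A^{-t})\}\cong GL(m,2)$ lies in $P_m\cap O^+(2m,2)$, is simple, and is transitive on $V^-$. So $L$ preserves $\Gamma$ and $\Delta$, hence also the partition $\{S,\overline S\}$ given by a switching set between them. Transitivity forces $L$ to interchange $S$ and $\overline S$, contradicting simplicity.

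Within your framework you could instead compute suborbits. One checks (e.g.\ with Witt's lemma) that $(P_m)_Q=O(Q)_U$ is transitive on all $2^m(2^{m-1}-1)$ vertices $Q+\langle\cdot,z\rangle$ with $Q(z)=0$ and $z\notin U$, i.e.\ on everything outside the block of $Q$. This forces $e\in\{0,2^{m-1}\}$, so $\Gamma$ or $\overline\Gamma$ is disconnected, which is impossible for the admissible parameters. The same fact supplies the equitability you asserted; that does not follow from $2$-transitivity on blocks alone.

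A minor slip: in the case $k_2^-$, $\theta=s$, you get $d=s+e=-2^{m-2}-1$, not $2^{m-1}$. The conclusion of that case is unchanged.
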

\begin{proof}
The main ingredient of the proof is a subgroup $L$ of $G$ with the following properties:
\begin{enumerate}
\item $L$ is contained in a (conjugate of) $P_m$ and $O^+(2m,2)$.
\item $L$ is simple.
\item $L$ is transitive on $V^-$. 
\end{enumerate}
We first show that (1)-(3) imply the proposition. Assume the converse and let $\Gamma$ be a strongly regular graph in the switching class of $\mathcal{T}^-$ with $L\leq \Aut(\Gamma)$. Let $\Delta=NO^-_{2m}(2)$ be a symplectic strongly regular graph with $\Aut(\Delta)=O^+(2m,2)$. Also, $\Delta$ is in the switching class of $\mathcal{T}^-$, and there is a switching set $S$ from $\Delta$ to $\Gamma$. Since $L\leq \Aut(\Delta)$, $L$ must preserve the partition $S\dot\cup\overline{S}$. By (3), this is only possible if $|S|=\frac{v^-}{2}$, and $L$ permutes the two sets $S$, $\overline{S}$. This contradicts (2). 

To construct $L$, we use the hyperbolic quadratic form 
\[Q^+(x_1,\ldots,x_{2m}) = x_1x_{m+1}+\cdots+x_mx_{2m}.\]
Put
\[L=\left\{ \begin{bmatrix}A&0\\0&A^{-t}\end{bmatrix} \mid A\in GL(m,2)\right\}.\] 
Clearly, $L\cong GL(m,2)$ is simple, $L$ preserves $Q^+$, and $L$ fixes the maximal totally isotropic subspace $x_1=\cdots=x_m=0$. To show (3), we compute the stabilizer $K$ of the elliptic quadratic form 
\[Q^-(x_1,\ldots,x_{2m}) = x_1^2+x_1x_{m+1}+x_{m+1}^2+x_2x_{m+2}+\cdots+x_mx_{2m}\]
in $L$. The element $M=\begin{bmatrix}A&0\\0&A^{-t}\end{bmatrix}$ preserves $Q^-$ if and only if both $A$ and $A^t$ preserve the hyperplane $x_1=0$ of $\mathbb{F}_2^m$. This happens if and only if $M=\begin{bmatrix}1&0\\0&B\end{bmatrix}$ with $B\in GL(m-1,2)$. As $|GL(m,2):GL(m-1,2)|=(2^m-1)2^{m-1}$, we get that the $L$-orbit of $Q^-$ has length $|V^-|=2^{2m-1}-2^{m-1}$, proving the transitivity of $L$ on $V^-$. 
\end{proof}

\subsection{Stabilizer \texorpdfstring{$H=(Sp(m,2)\times Sp(m,2))\rtimes C_2$}{H=(Sp(m,2)xSp(m,2)).2} of a decomposition}
The subspace $U\leq \mathbb{F}_2^{2m}$ is nondegenerate if the restriction of $\langle \cdot,\cdot \rangle$ to $U$ is nondegenerate. Equivalently, $U\cap U^\perp=0$. Clearly, $\dim(U)\leq m$, and equality is possible if and only if $m$ is even. Assume now that this is the case, and let $U$ be a nondegenerate subspace of dimension $m$. Then $\mathbb{F}_2^{2m}$ decomposes as $\mathbb{F}_2^{2m}=U\oplus U^\perp$. Let $S_{\{U, U^\perp\}}$ denote the stabilizer of this decomposition. Then $S_{\{U, U^\perp\}}\cong (Sp(m,2)\times Sp(m,2))\rtimes C_2$. By Lemma \ref{lm:symplectic-maxgr}(iv), $S_{\{U, U^\perp\}}$ is a maximal subgroup acting transitively on $V^-$. 

\begin{prop} \label{pr:no-decomp-subgr}
Assume that $m$ is even. There is no $S_{\{U, U^\perp\}}$-invariant strongly regular graph in the switching class of $\mathcal{T}^-$. 
\end{prop}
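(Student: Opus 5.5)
Mimic the proof of Proposition \ref{pr:symplectic-maxgr}. Suppose $\Gamma$ is an $S_{\{U,U^\perp\}}$-invariant strongly regular graph in the switching class of $\mathcal{T}^-$, and let $\Delta=NO^-_{2m}(2)$, with $\Aut(\Delta)=O^+(2m,2)$, be the standard graph in the same class. The aim is to find a subgroup $L\le G$ with three properties:
\begin{enumerate}[(1)]
\item $L$ is contained in a conjugate of $S_{\{U,U^\perp\}}$ and in $O^+(2m,2)$;
\item $L$ has no subgroup of index $2$;
\item $L$ is transitive on $V^-$.
\end{enumerate}
If $S$ is the switching set from $\Delta$ to $\Gamma$, then by (1) $L$ preserves the partition $S\,\dot\cup\,\overline S$. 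By (3) this forces $|S|=v^-/2$ with $L$ swapping $S$ and $\overline{S}$. The stabilizer of $S$ in $L$ would then have index $2$, contradicting (2).

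For $L$, I would use the product structure. Write $\mathbb{F}_2^{2m}=U\oplus U^\perp$ with $\dim U=\dim U^\perp=m$ even. A quadratic form linearizing to $\langle\cdot,\cdot\rangle$ is $Q=Q_1\oplus Q_2$, where $Q_i$ lives on the $i$-th summand, and its type is the product of the types of $Q_1$ and $Q_2$. Take $\Delta$ to correspond to the hyperbolic form $Q^+=Q_1^+\oplus Q_2^+$, and set
\[L=O^+(m,2)\times O^+(m,2)\le Sp(U)\times Sp(U^\perp)\le S_{\{U,U^\perp\}}.\]
This $L$ fixes $Q^+$, so (1) holds. The elliptic forms are exactly the sums $Q_1^+\oplus Q_2^-$ and $Q_1^-\oplus Q_2^+$. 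However, $L$ fixes $U$ and therefore cannot interchange these two families, so $L$ has at least two orbits on $V^-$ and is not transitive. To repair this I would adjoin the swap $\sigma$ of $U$ and $U^\perp$; then $(O^+(m,2)\times O^+(m,2))\rtimes\langle\sigma\rangle$ is transitive on $V^-$. The cost is that it has many subgroups of index $2$, so (2) fails. I would therefore pass to a perfect subgroup. Since $O^+(m,2)$ is not perfect, replace it by $\Omega^+(m,2)$, and check that $\Omega^+(m,2)\times\Omega^+(m,2)$ is still transitive on $\{Q_1^+\}\times V^-(U^\perp)$; that is, that $\Omega^+(m,2)$ is transitive on the elliptic forms of $U^\perp$ (the transitive action of $Sp(m,2)$ restricted to this subgroup).

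The main obstacle is that this forces a decomposable $L$ inside $\Aut(\Delta)$ to be non-transitive, while the swap $\sigma$ introduces index-$2$ subgroups. The natural fix is to drop (2) and instead argue directly that a setwise partition $S\,\dot\cup\,\overline S$ invariant under the full group $O^+(2m,2)\cap S_{\{U,U^\perp\}}$, with $S$ regular of degree $k-\mu$ in $\Delta$ as required by Result \ref{res:rels-switching1}, cannot exist. Concretely, the partition must be a union of orbits of the perfect subgroup $\Omega^+(m,2)^2$, namely the two families $\mathcal{F}_1=\{Q^+_1\oplus Q^-_2\}$ and $\mathcal{F}_2=\{Q^-_1\oplus Q^+_2\}$, each of size $v^-/2$. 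So $S=\mathcal{F}_1$ up to complement. I would then compute the valency of $\Delta$ restricted to $\mathcal{F}_1$, where two forms are adjacent in $\Delta$ according to the type of the difference of their associated nonsingular points. I expect this degree to differ from $k-\mu$; alternatively, by Result \ref{res:rels-switching2}, it should fail the half-adjacency condition. Either outcome shows that switching $\Delta$ by $\mathcal{F}_1$ does not give a strongly regular graph. This counting is the step I expect to need the most care.
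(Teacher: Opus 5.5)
\textbf{Gap.} Your fallback does more than leave the valency count undone: as formulated, it cannot succeed. Once you intersect with $\Aut(\Delta)$, all you retain is $K=S_{\{U,U^\perp\}}\cap O(Q^+)=(O(Q_1^+)\times O(Q_2^+))\rtimes\langle\sigma\rangle$. This group is intransitive on $V^-$, so $|S|=v^-/2$ is no longer forced (Result \ref{res:rels-switching1} needs a change of valency). Moreover, $K$-invariant switchings of $\Delta$ to strongly regular graphs do exist. Take the $K$-orbit
\[A=\bigl(V^-(U)\times\{Q_2^+\}\bigr)\cup\bigl(\{Q_1^+\}\times V^-(U^\perp)\bigr),\qquad |A|=2^m-2^{m/2},\]
of elliptic forms that agree with $Q^+$ on $U$ or on $U^\perp$. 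Identify $V^-$ with the nonsingular vectors $a$ of $Q^+$, so that $\Delta$-adjacency is $\langle a,b\rangle=0$. A direct count then shows that every vertex outside $A$ has exactly $|A|/2$ neighbours in $A$, and every vertex of $A$ has exactly $|\overline{A}|/2$ neighbours in $\overline{A}$. By Result \ref{res:rels-switching2}(1), switching $\Delta$ with respect to $A$ therefore gives a $K$-invariant strongly regular graph in the switching class with the same parameters as $\Delta$. No argument that uses only $K$ and the switching criteria can exclude it.

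Independently, your orbit analysis is wrong. $\Omega(Q_1^+)\times\Omega(Q_2^+)$ fixes $Q_1^+$ and $Q_2^+$, so each type family splits into two orbits, for instance $\{Q_1^+\}\times V^-(U^\perp)$ and its complement in $\mathcal{F}_1$. This yields a second half-size candidate besides $\mathcal{F}_1$. Also, $\Omega^+(4,2)\cong S_3\times S_3$ is not perfect, so the case $m=4$ is not covered as stated.

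\textbf{Missing idea.} Use the $S_{\{U,U^\perp\}}$-invariance of $\Gamma$ directly, through a vertex stabilizer, instead of going through a comparison graph. Put $P_1=V^-(U)\times V^+(U^\perp)$ and $P_2=V^+(U)\times V^-(U^\perp)$. The stabilizer of $(Q_1,Q_2)\in P_1$ in $S_{\{U,U^\perp\}}$ is $O(Q_1)\times O(Q_2)\cong O^-(m,2)\times O^+(m,2)$, and it is transitive on $P_2$. Indeed, the forms of the other type on $U$ are $Q_1+\langle c,\cdot\rangle$ with $Q_1(c)=1$, and $O(Q_1)$ is transitive on such $c$; the same holds on $U^\perp$. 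Hence each vertex of $P_1$ is adjacent to all or none of $P_2$. By transitivity on $P_1$, the edges between $P_1$ and $P_2$ form a complete or an empty bipartite graph, so $\Gamma$ or $\overline{\Gamma}$ is disconnected, contradicting primitivity. This also explains why $\Delta$ switched by $A$ is not a counterexample to the proposition: it has both edges and non-edges between $P_1$ and $P_2$, so it is not $S_{\{U,U^\perp\}}$-invariant.
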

\begin{proof}
In this proof, we will use quadratic forms on $\mathbb{F}_2^{2m}$, $U$, and $U^\perp$. We always assume that these forms linearize to $\langle \cdot,\cdot \rangle$ or its restriction. Any quadratic form $Q$ defines the forms $Q_1=Q|_{U}$ and $Q_2=Q|_{U^\perp}$. All vectors $x$ decompose as $x=u_1+u_2$ with $u_1\in U$, $u_2\in U^\perp$, and $Q(x)=Q_1(u_1)+Q_2(u_2)$. This implies that $Q_1, Q_2$ determine $Q$ uniquely. Using the standard form of quadratic forms over finite fields, it is immediate that $Q$ is elliptic whenever $Q_1$ and $Q_2$ have different types. Counting the pairs $(Q_1, Q_2)$ of different type quadratic forms, we obtain a bijection between $V^-$ and $P_1\dot\cup P_2$, where $P_1$ is the set of pairs $(Q_1, Q_2)$ with $Q_1$ an elliptic quadratic form on $U$, $Q_2$ a hyperbolic quadratic form on $U^\perp$, and $P_2$ is the set of pairs $(Q_1, Q_2)$ with $Q_1$ a hyperbolic quadratic form on $U$, $Q_2$ an elliptic quadratic form on $U^\perp$. The group $S_{\{U, U^\perp\}}$ preserves the partition $P_1\dot\cup P_2$ and acts transitively on $P_1$ and $P_2$. Even more is true: fix a pair $(Q_1, Q_2)\in P_1$; it has a stabilizer $H=O^-(m,2)\times O^+(m,2)$ in $S_{\{U, U^\perp\}}$. As $O^-(m,2)$ is transitive on the set of hyperbolic quadric forms on $U$, and $O^+(m,2)$ is transitive on the set of elliptic quadratic forms on $U^\perp$, we obtain that $H$ acts transitively on $P_2$.

Let $\Gamma$ be an $S_{\{U, U^\perp\}}$-invariant strongly regular graph in the switching class of $\mathcal{T}^-$. By the last observation, any vertex in $P_1$ is connected to all or no vertices of $P_2$. Since the group is transitive on both $P_1$ and $P_2$, we have either no edges or all edges between them. Switching with respect to $P_1$, we can assume that no vertex of $P_1$ is connected to a vertex in $P_2$. Hence, $\Gamma$ is not connected, a contradiction. 
\end{proof}

\subsection{Proof of the main nonexistence result}

\begin{proof}[Proof of Theorem \ref{thm:nonexistence}]
Parts (i), (ii) and (iii) are in Lemma \ref{lm:sporadic-maxgr}(iii). Parts (iv), (v) and (vi) are in Propositions \ref{pr:Sp-2a-2b}, \ref{pr:symplectic-maxgr} and \ref{pr:no-decomp-subgr}, respectively. 
\end{proof}

\section{Uniqueness results} \label{sec:uniqueness}

In this section, we deal with the question of the uniqueness of the strongly regular graph $\Gamma$ when its associated two-graph $\mathcal{T}$, the automorphism group $G=\Aut(\mathcal{T})$, and the transitive maximal subgroup $H\leq G$ are given. At the end of the section, we complete the proof of the main result Theorem \ref{thm:main}.

\subsection{Uniqueness of affine polar graphs}

Let $m\geq 2$ be an integer, and $G=ASp(2m,2)$ and $\mathcal{T}$ be the affine two-graph. When we plug in the two-graph parameters $v=2^{2m}$, $a=2^{2m-1}-2$ from Table \ref{tab:regularity-properties} into the formulas of Result \ref{res:rels-twograph}(ii), we obtain the parameters
\begin{align*}
v&=2^{2m}, & k&=2^{2m-1}\pm 2^{m-1}- 1, \\
\lambda &=2^{2m-2}\pm 2^{m-1}-2 , & \mu&=2^{2m-2}\pm 2^{m-1}
\end{align*}
of the affine polar graphs $VO_{2m}^\pm(2)$. Recall that the vertex set of $VO_{2m}^\pm(2)$ is $\mathbb{F}_2^{2m}$, and adjacency is defined by $Q(x + y) = 0$, where $Q(x)$ is a nondegenerate quadratic form of type $\pm1$; see Section \ref{sec:coboundary}. We aim to show that these graphs are the only strongly regular graphs $\Gamma$ in the switching class of $\mathcal{T}$ such that $\Aut(\Gamma)$ is a transitive maximal subgroup of $G$. We first classify the transitive maximal subgroups of $G$.

\begin{lem} \label{lm:affine-maxgr}
Let $G=ASp(2m,2)$ act 2-transitively on $V=\mathbb{F}_2^{2m}$. Let $N$ be the regular normal subgroup of $G$, and $S=G_0\cong Sp(2m,2)$ the stabilizer of $0\in V$. If $H$ is a transitive maximal subgroup of $G$, then $N\leq H$, $H=NT$, where $T$ is a maximal subgroup of $S$. 
\end{lem}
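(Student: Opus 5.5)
The plan is to use the structure of affine primitive groups. First I would show that every transitive maximal subgroup $H$ contains $N$. Suppose $N\not\leq H$. Since $N$ is normal in $G$ and $H$ is maximal, $G=NH$.

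Next I look at $N\cap H$. It is normalized by $H$, and by $N$ because $N$ is abelian. Hence $N\cap H$ is normal in $NH=G$. Now $S\cong Sp(2m,2)$ acts irreducibly on $N\cong\mathbb{F}_2^{2m}$, and the $G$-invariant subgroups of $N$ are exactly the $S$-submodules. So $N\cap H$ is either $1$ or $N$. The case $N\cap H=N$ is excluded by assumption, so $N\cap H=1$.

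With $N\cap H=1$ and $G=NH$, the order count gives $|H|=|G|/|N|=|S|$, so $H$ is a complement to $N$ in $G$. Transitivity of $H$ on $V$ gives $|H|=|H:H_0|\,|H_0|=2^{2m}|H_0|$. Hence $2^{2m}$ divides $|Sp(2m,2)|/|H_0|$, which by itself is no contradiction, so a different argument is needed.

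The cleanest route is a fixed-point argument. A complement $H$ to $N$ is the stabilizer of a point exactly when it is conjugate to $S$. Complements correspond to $H^1(S,N)$. For $Sp(2m,2)$ on its natural module, $H^1$ is nonzero: it is $\mathbb{F}_2$ for $m\geq 2$, which is the source of the quadratic forms. So there is a non-conjugate complement, and I must show it is not transitive.

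One could hope the nonsplit complement acts as $O$-type groups on cosets, but it is cleaner to use orders. A complement $H\cong Sp(2m,2)$ acting transitively on $2^{2m}$ points would need a subgroup of index $2^{2m}$ in $Sp(2m,2)$. Such a subgroup would give a factorization of $Sp(2m,2)$ with $H_0$ of $2$-power index, that is, index $2^{2m}$. By Guralnick's theorem on simple groups with a subgroup of prime power index (applied to $Sp(2m,2)$, which is simple for $m\geq3$), no such subgroup exists. The small cases $m=2$, with $Sp(4,2)\cong S_6$, I would check directly: $S_6$ has no subgroup of index $16$ since $16\nmid 720$. For $m\geq3$, Guralnick's list ($A_n$, $PSL(n,q)$, $PSL(2,11)$, $M_{23}$, $M_{11}$, $PSU(4,2)\cong PSp(4,3)$) contains no $Sp(2m,2)$ with $2$-power index $2^{2m}$. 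So no complement is transitive, and $N\leq H$.

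Once $N\leq H$, the Dedekind law gives $H=N(H\cap S)$, with $T=H\cap S$. Maximality of $H$ in $G$ is equivalent to maximality of $T$ in $S$, via the lattice correspondence between overgroups of $N$ and subgroups of $S\cong G/N$. The main obstacle is the step excluding complements, which relies on invoking the prime-power-index classification correctly, including the edge cases.
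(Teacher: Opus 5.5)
Your argument is the paper's own. Maximality gives $G=NH$. Then $N\cap H$ is normal in $G$, being normalized by $H$ and by the abelian group $N$. Minimal normality of $N$ (irreducibility of $S$) forces $N\cap H=1$, so $H\cong Sp(2m,2)$ would need a subgroup of index $2^{2m}$, which Guralnick's prime-power-index theorem excludes. Your Dedekind-law and lattice-correspondence finish is fine; the paper does not even spell it out. The $H^1(S,N)$ discussion is an unneeded detour, since Guralnick's theorem disposes of every complement at once. (The non-standard complement is in fact intransitive, with orbits of sizes $2^{2m-1}\pm 2^{m-1}$.)

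The step that fails as written is $m=2$. We have $|Sp(4,2)|=2^4\cdot 3\cdot 15=720=16\cdot 45$, so $16$ does divide $720$, and the divisibility argument gives nothing. You must rule out a subgroup $K\le S_6$ of order $45$ directly. Any of these works: $K$ has a normal Sylow $5$-subgroup, so it lies in the normalizer of a $5$-cycle, which has order $20$; or $K$ is abelian and would contain an element of order $15$, which $S_6$ lacks; or, since $|K|$ is odd, $K\le A_6\cong PSL(2,9)$ with index $8$, which Guralnick's theorem excludes. Treating $m=2$ separately is the right instinct. The paper cites Guralnick for all $m\ge 2$ even though $Sp(4,2)\cong S_6$ is not simple. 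Your justification for that case just needs this correction.
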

\begin{proof}
Assume that $N\not\leq H$. As $H$ is maximal, $G=NH$. $N\cap H$ is normal in both $N$ and $H$, hence $N\cap H\triangleleft G$. This implies $N\cap H=1$ because $N$ is a minimal normal subgroup of $G$. Then $H\cong Sp(2m,2)$, and the transitivity of $H$ contradicts \cite[Theorem 1]{Guralnick1983}.
\end{proof}

\begin{prop} \label{pr:affine-uniqueness}
Let $m\geq 2$ be an integer, $G=ASp(2m,2)$, and $H$ a transitive maximal subgroup of $G$. Let $\mathcal{T}$ be the affine two-graph with automorphism group $G$, and $\Gamma$ a strongly regular graph in the switching class of $\mathcal{T}$ with $\Aut(\Gamma)=H$. Then for $\phi\in \{\pm 1\}$, $H\cong \mathbb{F}_2^{2m} \rtimes O^\phi(2m,2)$, and $\Gamma\cong VO^\phi_{2m}(2)$. 
\end{prop}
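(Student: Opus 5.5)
By Lemma \ref{lm:affine-maxgr}, $H=NT$ with $N\cong\mathbb{F}_2^{2m}$ the translation group and $T$ a maximal subgroup of $S=Sp(2m,2)$. The plan is to pin $\Gamma$ down first using the translations alone, and only then use $T$.

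\textbf{Step 1 (translation-invariant graphs in the switching class).} Take $v=0$ and $\tau(x,y,z)=\langle x,y\rangle+\langle x,z\rangle+\langle y,z\rangle$, as in Section \ref{sec:coboundary}. A convenient base $1$-cochain is $s(x,y)=Q_0(x+y)$ for a fixed quadratic form $Q_0$ polarizing to $\langle\cdot,\cdot\rangle$; it satisfies $\partial s=\tau$. By Proposition \ref{prop:bracket-actions}(iii), every graph in the switching class has the form $\partial f+s$ with $f(0)=0$, and its adjacency is
\[
x\sim y \iff Q_0(x+y)+f(x)+f(y)=0 .
\]
Such a graph is invariant under all translations iff $f(x+t)+f(y+t)=f(x)+f(y)$ for all $x,y,t$. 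This says that $x\mapsto f(x+t)+f(x)$ is constant for each $t$, so $f$ is affine; since $f(0)=0$, $f$ is linear. Write $f(x)=\langle c,x\rangle$. Then $Q(z):=Q_0(z)+\langle c,z\rangle$ is again a quadratic form polarizing to $\langle\cdot,\cdot\rangle$, and $x\sim y\iff Q(x+y)=0$. Conversely, every such $Q$ arises this way. Hence the $N$-invariant graphs in the switching class are exactly the $2^{2m}$ graphs $\Gamma_Q$, where $Q$ runs over the $2^{2m-1}\pm2^{m-1}$ forms of type $\pm$.

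\textbf{Step 2 (identify $\Gamma$ and $H$).} Each $\Gamma_Q$ is $VO^\phi_{2m}(2)$ or its complement, where $\phi$ is the type of $Q$. Its parameters then match one of the two sets given by Result \ref{res:rels-twograph}; the other graph in each complementary pair has the complementary two-graph rather than $\mathcal{T}$. By Section \ref{affinepolar}, $\Aut(\Gamma_Q)=N\rtimes O(Q)$ with $O(Q)=O^\phi(2m,2)$. So from $H\le\Aut(\Gamma)$ and $N\le H$ we get $\Gamma=\Gamma_Q$ for some $Q$. Then $H=\Aut(\Gamma)=N\rtimes O(Q)\cong\mathbb{F}_2^{2m}\rtimes O^\phi(2m,2)$, and $\Gamma\cong VO^\phi_{2m}(2)$.

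\textbf{Main obstacle.} The delicate point is the bookkeeping in Step 1: checking that the curly-bracket action of a translation really reduces to the affine condition on $f$, with no stray terms coming from $s$. Choosing $s(x,y)=Q_0(x+y)$ instead of the contraction $\Delta_0\tau$ makes $s$ translation-invariant, which reduces this to the one-line computation above. A second point to confirm is that $O^\phi(2m,2)$ is genuinely maximal in $Sp(2m,2)$, so that $H$ is maximal as the hypothesis requires. This is standard and consistent with Lemma \ref{lm:symplectic-maxgr}(i), except in small cases such as $m=2$, where one can check directly.
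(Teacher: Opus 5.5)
Your proof is correct, and its core is the same as the paper's. Both arguments use Lemma~\ref{lm:affine-maxgr} to put the translation group $N$ inside $H=\Aut(\Gamma)$. Both then show that $N$-invariance forces the switching to be along a linear functional $\alpha$, so that $\Gamma$ is the affine polar graph of $Q_0+\alpha$, a quadratic form with the same polarization. The paper works with a switching set $S$ from $\Gamma$ to a reference graph $VO^{-\epsilon}_{2m}(2)$, chosen to have the \emph{other} parameter set. Result~\ref{res:rels-switching2} then gives $|S|=2^{2m-1}$, and the $N$-invariant partition $\{S,\overline{S}\}$ with $0\in S$ becomes a hyperplane. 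That route stays purely combinatorial, but it needs the strong regularity of $\Gamma$ and this parameter bookkeeping. Your translation-invariant base cochain $s(x,y)=Q_0(x+y)$ gives additivity of $f$ in one line and uses neither Result~\ref{res:rels-switching2} nor strong regularity. It also yields a slightly stronger statement: the $N$-invariant graphs in the switching class are exactly the $2^{2m}$ graphs $\Gamma_Q$.

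There is one bookkeeping slip. With $\tau$ taken from Section~\ref{sec:coboundary}, the cochain $\partial f+s$ encodes adjacency by the value $1$, so literally you would get the complements $\overline{\Gamma_Q}$. That $\tau$ has regularity $2^{2m-1}$, while the $\mathcal{T}$ of the proposition has regularity $2^{2m-1}-2$. Use instead the base cochain $1+Q_0(x+y)$ on distinct pairs; it is still translation-invariant, and its coboundary is the complementary cocycle. Then your rule ``$x\sim y$ iff $Q_0(x+y)+f(x)+f(y)=0$'' is exactly right, and the hedge ``or its complement'' in Step~2 can be dropped. Also, you do not need the maximality of $O^\phi(2m,2)$: the statement only asks you to identify $H$ and $\Gamma$, given that $H$ is maximal.
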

\begin{proof}
Choose $\epsilon$ such that $\Gamma$ has parameters 
\[(2^{2m},2^{2m-1}+\epsilon 2^{m-1}-1,2^{2m-2}+\epsilon 2^{m-1}-2,2^{2m-2}+\epsilon 2^{m-1}).\] 
Having the same associated two-graph, we can switch $\Gamma$ with a set $S$ in the graph $VO^{-\epsilon}_{2m}(q)$. Also, we can assume $0 \in S$ without loss of generality. By Result \ref{res:rels-switching2}(2), $|S|=2^{2m-1}$. 

By Lemma \ref{lm:affine-maxgr}, both $\Aut(\Gamma)$ and $\Aut(VO^{-\epsilon}_{2m}(2))$ contain the minimal normal subgroup $N$ of $G$. This means that $N$ leaves the two edge sets $E(\Gamma)$, $E(VO^{-\epsilon}_{2m}(2))$ invariant. This implies that the symmetric difference of $E(\Gamma)$ and $E(VO^{-\epsilon}_{2m}(2))$ is $N$-invariant as well. Since $S$ is a switching set, this implies that $N$ preserves the partition $V=S\dot\cup\overline{S}$. Fix arbitrary elements $w,z\in S$. The translation $x\mapsto x+w$ is an element of $N$, which leaves $S$ invariant by $0,w\in S$. Hence, $w+z\in S$, and $S$ is an $\mathbb{F}_2$-linear subspace. As $|S|=2^{2m-1}$, $S$ has codimension $1$, and there is a linear functional $\alpha:V\to \mathbb{F}_2$ such that $S=\ker(\alpha)$. 

Adjacency in $VO^{-\epsilon}_{2m}(2)$ is given by $Q(x+y)=0$, where $Q(x)$ is a nondegenerate quadratic form of type $-\epsilon$. Define the quadratic form 
\[Q^*(x)=Q(x)+\alpha(x)^2=Q(x)+\alpha(x).\]
On the one hand, $Q$ and $Q^*$ polarize to the same symplectic form; hence, $Q^*$ defines a strongly regular affine polar graph $\Delta\cong VO^{\phi}_{2m}(2)$ in the switching class of $\mathcal{T}$, $\phi=\pm 1$. On the other hand, 
\[0=Q^*(x+y)=Q(x+y)+\alpha(x)+\alpha(y)\]
if and only if either $x,y$ are adjacent in $VO^{-\epsilon}_{2m}(q)$ and both are in $S$ or both are in $\overline{S}$, or $x,y$ are nonadjacent in $VO^{-\epsilon}_{2m}(q)$ and one is in $S$ and the other in $\overline{S}$. In other words, $Q^*(x+y)=0$ if and only if $x$ and $y$ are adjacent in $\Gamma$. This proves $\Gamma=VO^{\phi}_{2m}(q)$, and $H\cong \mathbb{F}_2^{2m} \rtimes O^\phi(2m,2)$ follows. 
\end{proof}

\subsection{Uniqueness in two-graphs of symplectic type}\label{subsec:82}

We already know four infinite classes of strongly regular graphs associated with symplectic two-graphs: $NO^\pm_{2m}(2)$ and $NO^\pm_{2m+1}(4)$. In this subsection, we prove that, besides these classes, precisely three further graphs exist. By Section \ref{sec:nonexistence}, we have a relatively short list of transitive maximal subgroups of $PSp(2m,2)$. To find the graphs that admit these subgroups as the automorphism groups, we will determine their rank and the orbit lengths of the stabilizers. 

As in Section \ref{sec:nonexistence}, $m\geq 3$ is an integer, $\langle \cdot,\cdot \rangle$ is a nondegenerate symplectic bilinear form on $\mathbb{F}_2^{2m}$, and $G=Sp(2m,2)$ is the symplectic group preserving $\langle \cdot,\cdot \rangle$. We denote by $V^\pm$ the set of quadratic forms of type $\pm 1$, linearizing to $\langle \cdot,\cdot \rangle$; $G$ acts doubly transitively on both $V^+$ and $V^-$. 

Let $H$ be a transitive maximal subgroup of $G$. By Theorem \ref{thm:nonexistence} and Lemma \ref{lm:symplectic-maxgr}, one of the following holds:
\begin{enumerate}[(A)]
\item $H=O^{\pm}(2m,2)$.
\item $m$ is even, and $H=Sp(m,4)\rtimes C_2$.
\item $m=3$, and $H=G_2(2)\cong PSU(3,3)\rtimes C_2$.
\item $m=4$, $G$ acts on 120 points, and $H=S_{10}$.
\item $m=4$, $G$ acts on 136 points, and $H=PSL(2,17)$.
\end{enumerate}
The existence and uniqueness of a graph in the exceptional cases (C), (D), and (E), as well as the identification of their associated two-graphs, are discussed in Section \ref{sec:known-graphs}; see Proposition \ref{prop:table-two-graphs}. We verify their uniqueness under the prescribed automorphism groups computationally. In what follows, we focus on the infinite classes in (A) and (B). 
\begin{lem} \label{lm:rank-O(2m2)}
The subgroup $A=O^{\pm}(2m,2)$ of $PSp(2m,2)$ is transitive of rank $3$ on $V^\mp$. 
\end{lem}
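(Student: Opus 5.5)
The plan is to identify $V^\mp$, as an $A$-set, with the set of nonsingular vectors of a fixed quadratic form, and then read off the orbitals using Witt's theorem.

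\textbf{Step 1: linearize the action.} Fix $Q_0\in V^{\pm}$ with $A=O(Q_0)=O^{\pm}(2m,2)$. Every quadratic form linearizing to $\langle\cdot,\cdot\rangle$ is $Q_a(x)=Q_0(x)+\langle a,x\rangle$ for a unique $a\in\mathbb{F}_2^{2m}$, because the difference of two such forms is additive, hence linear. For $g\in A$ we have $Q_a^g=Q_{a^g}$, so the action of $A$ on forms is equivalent to its natural linear action on vectors $a$. Next I would use the Arf invariant relation $\mathrm{Arf}(Q_a)=\mathrm{Arf}(Q_0)+Q_0(a)$. This can be checked on a symplectic basis, or by noting that $Q_a(x)=Q_0(x+a)+Q_0(a)$, so the number of zeros of $Q_a$ is $|Q_0^{-1}(Q_0(a))|$. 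It shows that $Q_a$ has the opposite type exactly when $Q_0(a)=1$. Hence $V^\mp$ is $A$-isomorphic to the set $N$ of $Q_0$-nonsingular vectors. As a sanity check, $|N|=2^{2m}-(2^{2m-1}\pm 2^{m-1})=2^{2m-1}\mp2^{m-1}=|V^\mp|$.

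\textbf{Step 2: orbits on $N$ and on pairs.} Transitivity of $A$ on $N$ follows from Witt's extension theorem for the nondegenerate quadratic space $(\mathbb{F}_2^{2m},Q_0)$. The map $a\mapsto b$ is an isometry $\langle a\rangle\to\langle b\rangle$ of quadratic spaces, so it extends to an element of $A$. For distinct $a,b\in N$, the value $\langle a,b\rangle$ is an $A$-invariant, and $Q_0(a+b)=Q_0(a)+Q_0(b)+\langle a,b\rangle=\langle a,b\rangle$. Therefore, if $\langle a,b\rangle=\langle a',b'\rangle$, the linear map $a\mapsto a'$, $b\mapsto b'$ is an isometry $\langle a,b\rangle\to\langle a',b'\rangle$ of the restricted quadratic forms; note that $a,b$ are independent since $a\ne b$ and both are nonzero. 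Witt's theorem then gives an element of $A$ mapping $(a,b)$ to $(a',b')$. So there are at most two orbitals besides the diagonal.

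\textbf{Step 3: both orbitals are nonempty.} Write $\mathbb{F}_2^{2m}=\langle e_1,f_1\rangle\perp W$, where $W$ is nondegenerate of dimension $2m-2\ge 4$, and take $a\in N$. I would exhibit $b\in N$ with $\langle a,b\rangle=1$ and another $b'\in N\setminus\{a\}$ with $\langle a,b'\rangle=0$, for instance by adding to $a$ a suitable vector of $a^\perp$ or of a hyperbolic pair disjoint from $a$. Alternatively, a counting argument works: the number of $b\in a^\perp$ with $Q_0(b)=1$ exceeds $1$ since $m\ge3$. Hence the rank is exactly $3$.

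The main obstacle is the appeal to Witt's theorem in characteristic $2$ when $\langle a,b\rangle=0$, because then $\langle a,b\rangle$ is a degenerate subspace: its radical is the whole plane, although $Q_0$ is nonzero on it. I would cite the version of Witt's theorem for nondegenerate quadratic forms, valid for arbitrary subspaces (e.g.\ Taylor's \emph{Geometry of the Classical Groups}). If needed, I would instead give a direct argument: extend $\{a,b\}$ to a basis adapted to $Q_0$, treating $\langle a,b\rangle=1$ as a nondegenerate plane of fixed type and $\langle a,b\rangle=0$ via explicit hyperbolic partners.
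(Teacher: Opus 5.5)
Your proposal is correct and follows the paper's argument: identify $V^\mp$ with the $Q$-nonsingular vectors, then classify ordered pairs of distinct points by the single invariant $Q(x+y)=\langle x,y\rangle$ using Witt's theorem. Your additional justifications fill in points the paper states without proof. These are the identification via $Q_a=Q_0+\langle a,\cdot\rangle$ with the type flipping exactly when $Q_0(a)=1$, the validity of Witt's theorem for degenerate subspaces when the polar form is nondegenerate, and the nonemptiness of both nontrivial orbitals.
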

\begin{proof}
Let $Q(x)$ be the quadratic form associated with $A=O^\epsilon(2m,2)$. The action of $A$ on $V^\mp$ is equivalent to its linear action on the set $\{x\mid Q(x)=1\}$ of nonisotropic vectors. Take vectors $x,y,x',y'$ with $x\neq y$, $x'\neq y'$, and $Q(x)=Q(y)=Q(x')=Q(y')=1$. The map $x\mapsto x'$, $y\mapsto y'$ extends to an orthogonal linear map $g\in A$ if and only if $Q(x+y)=Q(x'+y')$. As $Q(x)$ can take only two values, the rank of the action is three. 
\end{proof}

\begin{lem} \label{lm:ranks-and-orbit-lengths-PSp(m4)}
Let $m=ab$, $q=2^b$ and $\epsilon\in \{1,-1\}$.
\begin{enumerate}[(i)]
\item For even $m$, the subgroup $B=Sp(2a,q)$ of $PSp(2m,2)$ is transitive on both $V^+$ and $V^-$. 
\item For any element $x^\epsilon \in V^\epsilon$, the stabilizer $B_{x^\epsilon}$ is isomorphic to $O^\epsilon(2a,q)$. 
\item $B_{x^\epsilon}$ has one orbit of lengths $1$, one orbit of length $(q^{a-1}+\epsilon)(q^a-\epsilon)$, and $(q-1)$ orbits of length $q^{a-1}(q^a-\epsilon)$. 
\end{enumerate}
\end{lem}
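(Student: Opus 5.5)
The plan is to realise everything inside a single $\mathbb{F}_q$-space. Write $\mathbb{F}_2^{2m}=\mathbb{F}_q^{2a}$ with $q=2^b$, and fix a nondegenerate $\mathbb{F}_q$-symplectic form $f$ with $\langle x,y\rangle=\mathrm{Tr}(f(x,y))$, where $\mathrm{Tr}:\mathbb{F}_q\to\mathbb{F}_2$ is the trace. This realises $B=Sp(2a,q)$ as the field extension subgroup of $G$. The key step is a $B$-equivariant bijection
\[\Phi: Q'\mapsto \mathrm{Tr}\circ Q'\]
from the set $\mathcal{Q}$ of $\mathbb{F}_q$-quadratic forms polarizing to $f$ onto $V^+\cup V^-$. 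Once this bijection is shown to preserve types, (i) and (ii) reduce to standard facts over $\mathbb{F}_q$, and (iii) reduces to counting vectors of $\mathbb{F}_q^{2a}$ according to the value of $Q'$.

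\textbf{The bijection.} $\mathrm{Tr}\circ Q'$ is an $\mathbb{F}_2$-quadratic form polarizing to $\mathrm{Tr}\circ f=\langle\cdot,\cdot\rangle$. Fix one $Q'_0\in\mathcal{Q}$.
\begin{itemize}
\item Every element of $\mathcal{Q}$ has the form $Q'_0+\ell^2$ with $\ell$ an $\mathbb{F}_q$-linear functional. Hence $|\mathcal{Q}|=q^{2a}=2^{2m}=|V^+\cup V^-|$.
\item $\Phi$ is injective. If $\mathrm{Tr}(\ell(x)^2)=0$ for all $x$ and $\ell\neq 0$, then $\ell(x)^2$ runs over all of $\mathbb{F}_q$, so $\mathrm{Tr}$ would vanish identically, which is false. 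Hence $\Phi$ is a bijection.
\item $\Phi$ is $B$-equivariant, since $B$ preserves $f$.
\item $\Phi$ preserves types. If $Q'$ is hyperbolic, it has a totally singular $\mathbb{F}_q$-subspace of dimension $a$. This is an $m$-dimensional totally singular $\mathbb{F}_2$-subspace for $\mathrm{Tr}\circ Q'$, so $\mathrm{Tr}\circ Q'$ is hyperbolic. For the elliptic case, compare counts: both the elliptic forms in $\mathcal{Q}$ and $V^-$ have the complementary cardinality.
\end{itemize}
I expect this type-preservation, and the bookkeeping that the parametrisation by $\ell$ is bijective, to be the main point needing care. Everything afterwards is Witt's theorem.

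\textbf{Parts (i) and (ii).} $V^\epsilon$ corresponds to the $\mathbb{F}_q$-forms of type $\epsilon$ polarizing to $f$. By Witt's theorem, all nondegenerate forms of a given type polarizing to $f$ are equivalent under $Sp(2a,q)$, so $B$ is transitive on $V^\epsilon$. The stabilizer of $Q'$ is $O^\epsilon(2a,q)$.

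\textbf{Part (iii).} Fix $x^\epsilon\leftrightarrow Q'$. Every form in $\mathcal{Q}$ can be written uniquely as
\[Q'_c(x)=Q'(x)+f(c,x)^2,\qquad c\in\mathbb{F}_q^{2a},\]
because Frobenius is bijective and $f$ is nondegenerate. An element $g\in O^\epsilon(2a,q)$ sends $Q'_c$ to $Q'_{c^g}$, so the action of $B_{x^\epsilon}$ on $V^+\cup V^-$ is equivalent to its linear action on vectors $c$. By Witt's lemma, the orbits on nonzero vectors are determined by the value $Q'(c)$. This gives:
\begin{itemize}
\item the orbit $\{0\}$, which is $x^\epsilon$ itself, of length $1$;
\item the nonzero singular vectors, one orbit of length $(q^{a-1}+\epsilon)(q^a-\epsilon)$;
\item for each of the $q-1$ values $\lambda\in\mathbb{F}_q^*$, the vectors with $Q'(c)=\lambda$, one orbit of length $q^{a-1}(q^a-\epsilon)$.
\end{itemize}
These are exactly the orbit lengths claimed in (iii).
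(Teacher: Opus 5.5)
Your proof is correct, and for part (iii) it is the paper's argument. Once the action of $B_{x^\epsilon}$ on $V^+\cup V^-$ is identified with the linear action of $O^\epsilon(2a,q)$ on $\mathbb{F}_q^{2a}$, the orbits are $\{0\}$, the nonzero singular vectors, and the level sets $\{c\mid Q'(c)=\lambda\}$ for $\lambda\neq 0$.

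You differ in how you get that identification. The paper quotes Lemma~\ref{lm:symplectic-maxgr}(ii) for transitivity, and \cite[Theorem 1.1(iii)]{Nagy2024} for both $B_{x^\epsilon}\cong O^\epsilon(2a,q)$ and the equivalence of the actions. You build it from scratch via the trace bijection $Q'\mapsto\mathrm{Tr}\circ Q'$ and Witt's theorem. The paper's version is shorter but rests on an external result. Yours is self-contained and works for every factorization $m=ab$. It also exposes something the paper uses only implicitly in Proposition~\ref{pr:symplectic-uniqueness}(ii): which orbits in (iii) lie in $V^\epsilon$. Since $\mathrm{Tr}(f(c,x)^2)=\langle c,x\rangle$, you get $\mathrm{Tr}(Q'_c(x))=R(x+c)+R(c)$ with $R=\mathrm{Tr}\circ Q'$. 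Hence $\mathrm{Tr}\circ Q'_c\in V^\epsilon$ exactly when $\mathrm{Tr}(Q'(c))=0$.

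Two details should be written out rather than asserted. First, type preservation needs the index $|Sp(2a,q):O^+(2a,q)|=q^a(q^a+1)/2$, which equals $|V^+|$ because $q^a=2^m$. Second, the orbit lengths in (iii) should be derived. The paper does this from the number $(q^{2a-1}-1)/(q-1)+\epsilon q^{a-1}$ of points on the quadric. It also uses the fact that multiplication by $\mu\in\mathbb{F}_q^*$ maps $\{c\mid Q'(c)=\lambda\}$ onto $\{c\mid Q'(c)=\mu^2\lambda\}$, so all nonzero level sets have the same size.
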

\begin{proof}
(i) is Lemma \ref{lm:symplectic-maxgr}(ii). Theorem 1.1(iii) of \cite{Nagy2024} tells us that $B_{x^\epsilon}\cong O^\epsilon(2a,q)$, and its action on $V^+\cup V^-$ is equivalent to the linear action of $O^\epsilon(2a,q)$. This implies (ii). To see (iii), we introduce the notation $Q(x)$ for the quadratic form associated with $O^\epsilon(2a,q)$. The orbits of the linear action are $\{0\}$, $P_0=\{x\neq 0 \mid Q(x)=0\}$, and $P_\lambda=\{x\mid Q(x)=\lambda\}$. Since $cP_\lambda=P_{c^2\lambda}$, we have $|P_\lambda|=|P_\mu|$ for $\lambda,\mu \neq 0$. 

In $PG(2a-1,q)$, the equation $Q(x)=0$ determines a nonsingular quadric $\mathcal{Q}^\epsilon$ of type $\epsilon$. The number of projective points of $\mathcal{Q}^\epsilon$ is $(q^{2a-1}-1)/(q-1)+\epsilon q^{a-1}$. To a projective point of $\mathcal{Q}^\epsilon$ correspond precisely $q-1$ vectors in $P_0$, therefore,
\[|P_0|=(q-1)|\mathcal{Q}^\epsilon|= q^{2a-1}-1+\epsilon q^{a-1}(q-1) = (q^{a-1}+\epsilon)(q^a-\epsilon).\]
Since all $P_\lambda$, $\lambda\neq 0$ have the same cardinality $x$, we have
\[q^{2a}=1+(q^{a-1}+\epsilon)(q^a-\epsilon)+(q-1)x.\]
Resolving for $x$, we obtain $|P_\lambda|=q^{a-1}(q^a-\epsilon)$, and (iii) follows. 
\end{proof}

\begin{prop} \label{pr:symplectic-uniqueness}
Let $\Gamma$ be a strongly regular graph on $v=2^{2m-1}\pm 2^{m-1}$ vertices; $m\geq 3$. Assume that the associated two-graph of $\Gamma$ is of symplectic type. 
\begin{enumerate}[(i)]
\item If $\Aut(\Gamma)= O^{\mp}(2m,2)$, then $\Gamma \cong NO^{\mp}_{2m}(2)$.
\item If $m$ is even and $\Aut(\Gamma)= Sp(m,4) \rtimes C_2$, then $\Gamma \cong NO^{\pm}_{m+1}(4)$.
\end{enumerate}
\end{prop}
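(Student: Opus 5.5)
The plan is to exploit the small rank of $H=\Aut(\Gamma)$ on $V^\pm$: any $H$-invariant graph is a union of orbitals of $H$, so a rank computation leaves only a handful of candidate graphs, and the known graphs $NO^{\mp}_{2m}(2)$, $NO^{\pm}_{m+1}(4)$ must be among them.

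For (i), Lemma \ref{lm:rank-O(2m2)} says $A=O^{\mp}(2m,2)$ acts on $V^\pm$ with rank $3$. Its two nontrivial orbitals are determined by $Q(x+y)\in\{0,1\}$ in the linear model, where vertices are the nonsingular vectors with $Q(x)=1$. An $A$-invariant graph is therefore empty, complete, the graph given by one orbital, or its complement. The primitive strongly regular ones are thus exactly $NO^{\mp}_{2m}(2)$ (adjacency via tangent lines, i.e.\ $Q(x+y)=0$) and its complement. The complement has valency $v-1-k_1^\pm$. I would check against the two admissible valencies $k_1^\pm,k_2^\pm$ from Result \ref{res:rels-twograph} that this value is neither of them; equivalently, the complement lies in the complementary two-graph, not in $\mathcal{T}^\pm$. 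Hence $\Gamma\cong NO^{\mp}_{2m}(2)$.

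For (ii), I would use Lemma \ref{lm:ranks-and-orbit-lengths-PSp(m4)} with $b=2$, $a=m/2$, $q=4$. The point stabilizer in $B=Sp(m,4)$ has orbits of sizes $1$, $(4^{a-1}+\epsilon)(4^a-\epsilon)$, and three orbits $P_\lambda$ ($\lambda\in\mathbb{F}_4^*$) of size $4^{a-1}(4^a-\epsilon)$. The extra $C_2$ (field automorphism) fixes $P_0$ and $P_1$ and swaps $P_\omega$ with $P_{\omega^2}$. So $H=Sp(m,4)\rtimes C_2$ has rank $4$, with suborbits $1$, $|P_0|$, $|P_1|$, $2|P_1|$.

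An $H$-invariant graph is a union of a subset of these three nontrivial suborbits, so its valency is a subset sum of $\{|P_0|,|P_1|,2|P_1|\}$. I would compare these six nontrivial subset sums with $k_1^\pm=2^{2m-2}-1$ and $k_2^\pm=2^{2m-2}\pm 3\cdot2^{m-2}-1$; the sign $\epsilon$ of the stabilizer's quadric is tied to $\pm$ as in \cite{Nagy2024}. The expectation is that exactly one or two subset sums match. One of them must be $NO^{\pm}_{m+1}(4)$ itself, which is $H$-invariant and lies in $\mathcal{T}^\pm$ by Proposition \ref{prop:table-two-graphs}(ii).

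The remaining matches need separate treatment. A candidate whose full automorphism group is strictly larger than $H$ is excluded by the hypothesis $\Aut(\Gamma)=H$. For instance, $P_0\cup P_1$ might give a graph with automorphism group containing $O^{\mp}(2m,2)$, i.e.\ $NO^{\mp}_{2m}(2)$ again, since both have valency $k_1$. Otherwise I would compute the $\lambda$ parameter via intersection numbers of the orbital scheme, or argue that the graph does not lie in $\mathcal{T}^\pm$. The main obstacle is this last step: checking that any extra union of orbitals with the right valency either is not strongly regular, is not in the switching class, or has automorphism group larger than $H$.

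To handle that, I would first try the switching-set criterion in Result \ref{res:rels-switching2}. A graph in the same switching class as $NO^{\pm}_{m+1}(4)$ with the same parameters would differ from it by a switching set preserved by the primitive group $H$. That set must then be trivial, so the graph equals $NO^{\pm}_{m+1}(4)$. If instead the candidate's valency is $k_1$, it differs from $NO^{\mp}_{2m}(2)$ by an $H$-invariant switching set of size $v/2$, which primitivity of $H$ forbids. This would finish (ii).
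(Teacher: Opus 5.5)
Your part (i) is the paper's argument (rank $3$ of $O^{\mp}(2m,2)$ on $V^\pm$, Lemma \ref{lm:rank-O(2m2)}). Your check that the complement has valency $2^{2m-2}\pm 2^{m-1}\notin\{k_1^\pm,k_2^\pm\}$ correctly fills in a detail the paper leaves implicit.

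Part (ii), however, rests on a misreading of Lemma \ref{lm:ranks-and-orbit-lengths-PSp(m4)}(iii). The orbits listed there are orbits of $B_x$ on $V^+\cup V^-$, i.e.\ the linear action on $\mathbb{F}_4^{m}$ with $4^m=2^{2m}$ points. They are not orbits on the vertex set $V^\pm$: your suborbits $1,|P_0|,|P_1|,2|P_1|$ add up to $2^{2m}$, not $v$. In fact $1+|P_0|+|P_1|=2^{2m-1}\pm2^{m-1}=v$. Since $|P_0|$ is odd while each $|P_\lambda|$, $\lambda\neq0$, is even, $V^\pm\setminus\{x\}$ is $P_0$ together with a single $P_\lambda$, and the other two $P_\lambda$ make up $V^\mp$. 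So $H$ (already $Sp(m,4)$) has rank $3$ on $V^\pm$, with nontrivial suborbits of sizes $k_2^\pm$ and $2^{m-2}(2^m\mp1)$. Then (ii) follows exactly as (i): the only candidates are $NO^{\pm}_{m+1}(4)$ and its complement, and the complement's valency $2^{2m-2}\mp2^{m-2}$ is not admissible. That is the paper's proof.

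As written, your version has three further problems:
\begin{itemize}
\item It never carries out the subset-sum comparison; it only says ``the expectation is\ldots''.
\item The guess that $P_0\cup P_1$ might have valency $k_1$ is wrong; it has valency $v-1$.
\item The fallback for a valency-$k_1$ candidate fails. $NO^{\mp}_{2m}(2)$ is invariant under $O^{\mp}(2m,2)$, which does not contain $H$, so the switching set between it and an $H$-invariant graph has no reason to be $H$-invariant. Nor does it have to have size $v/2$, since the two valencies agree.
\end{itemize}

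Your other switching argument, though, is sound, and it proves (ii) on its own, with no parameter hypothesis and no orbit counting. Put $\Delta=NO^{\pm}_{m+1}(4)$, which lies in the switching class of $\mathcal{T}^\pm$ by Proposition \ref{prop:table-two-graphs}(ii). Conjugating inside $G=\Aut(\mathcal{T}^\pm)$, which preserves the switching class, identify $\Aut(\Gamma)$ with $\Aut(\Delta)=H$. Then $E(\Gamma)\triangle E(\Delta)$ is the complete bipartite graph between $S$ and $\overline{S}$, so $H$ preserves $\{S,\overline{S}\}$. Primitivity of $H$ on $V^\pm$ forces $S\in\{\emptyset,V\}$, i.e.\ $\Gamma=\Delta$, whatever the valency of $\Gamma$. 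This is the device the paper uses in Propositions \ref{pr:Sp-2a-2b} and \ref{pr:symplectic-maxgr}. Either repair works: apply this argument uniformly (rather than comparing with $NO^{\mp}_{2m}(2)$ in the $k_1$ case), or simply correct the suborbit list.
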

\begin{proof}
(i) is immediate by the rank 3 property of $O^\mp(2m,2)$; see Lemma \ref{lm:rank-O(2m2)}. (ii) Let us assume that $m$ is even and $\Aut(\Gamma)= Sp(m,4) \rtimes C_2$. Let us plug in $a=m/2$, $q=4$ in Lemma \ref{lm:ranks-and-orbit-lengths-PSp(m4)}(iii). We obtain that the stabilizer of a vertex $x$ has orbits of length 1, $2^{2m-2}\pm 3\cdot 2^{m-2}-1$, and $2^{m-2}(2^m\mp 1)$. However, Table \ref{tab:regularity-properties} and Result \ref{res:rels-twograph} imply that the degree of $\Gamma$ is either $2^{2m-2}-1$ or $2^{2m-2}\pm 3\cdot 2^{m-2}-1$. The only way to express these numbers as linear combinations of the orbit lengths is when the degree is $k=2^{2m-2}\pm 3\cdot 2^{m-2}-1$, and the neighbourhood of $x$ is an orbit of the stabilizers. Hence, $\Gamma$ is uniquely defined, and $\Gamma \cong NO^{\pm}_{m+1}(4)$.
\end{proof}

\subsection{Proof of the main theorem}

\begin{proof}[Proof of Theorem \ref{thm:main}]
Let $\Gamma$ be a primitive strongly regular graph with associated two-graph $\mathcal{T}$, $H=\Aut(\Gamma)$ transitive, and $G=\Aut(\mathcal{T})$ doubly transitive. If $\mathcal{T}$ is of affine polar type, then $\Gamma=VO^\pm_{2m}(2)$ by Proposition \ref{pr:affine-uniqueness}. Let $\mathcal{T}$ be of linear type, and $G=P\Sigma L(2,q)$. By \eqref{eq:linear-params}, $q$ is a square: $q=p^{2e}$ for some prime $p$. By Lemma \ref{lm:linear-maxgr}(i), $p\equiv 3 \pmod 4$. If $q=9$, then $\Gamma$ is the Petersen graph or its complement. If $q>9$, then Theorem \ref{thm:linear-type-srg} tells us that there are two choices for $\Gamma$, and these two are graph complements of each other. Moreover, their parameters differ, hence they are not isomorphic. When $\mathcal{T}$ is of unitary type, and $PSU(3,q)\leq G \leq P\Gamma U(3,q)$ then $q=5$ by Theorem \ref{thm:nonexistence}(i). For $q=5$, we have $H=S_7$ by Lemma \ref{lm:sporadic-maxgr}(ii), and $\Gamma$ must be Goethals' $S_7$-invariant $srg(126, 50, 13, 24)$. Theorem \ref{thm:nonexistence}(ii) and (iii) exclude the cases when $\mathcal{T}$ is of Ree type, or sporadic type with $G=Co_3$. In the sporadic type two-graph with $G=HS$, $\Gamma$ is uniquely determined by the parameters. 

It remains to deal with the case of a symplectic type two-graph $\mathcal{T}$. Then, $G=PSp(2m,2)$ with $m\geq 3$. The transitive maximal subgroups are given in Lemma \ref{lm:symplectic-maxgr}, part of which is excluded by Theorem \ref{thm:nonexistence}(iv)-(vi). The remaining exceptional cases are $(m,H)\in \{(3,G_2(2)), (4,S_{10}), (4,PSL(2,17))\}$. The existence of the corresponding graphs and the identification of their associated two-graphs are given in Proposition \ref{prop:table-two-graphs}. At the same time, uniqueness under the prescribed automorphism groups follows from the computations described in Subsection \ref{subsec:82}. The infinite classes of transitive maximal subgroups, and the existence and the uniqueness of the corresponding strongly regular graphs are given in Proposition \ref{pr:symplectic-uniqueness}. 
\end{proof}

\section*{Acknowledgments}
We thank P\'al Heged\"us (Budapest University of Technology and Economics, Hungary) and Peter M\"uller (University of W\"urzburg, Germany) for helpful discussions. 
The first author is supported by an NSERC Discovery Grant.
The second author is supported by the National Research, Development and Innovation Fund of the Ministry for Innovation and Technology of Hungary under grant SNN 152582. 
The research of the third author was supported by the Italian National Group for Algebraic and Geometric Structures and their Applications (GNSAGA - INdAM) and by the project SID \emph{Linear Algebra methods in Combinatorics and Industrial Engineering: Coding Theory, Graphs and their interest in Assembly Line Problems} CUP C33C25001110005 of Dipartimento di Tecnica e Gestione dei Sistemi Industriali of the Universit\`a degli Studi di Padova.

\bibliographystyle{abbrv}
\bibliography{twographs2025}
\end{document}